\newtheorem{thm}{Theorem}[section]
\newtheorem{lem}[thm]{Lemma}
\newtheorem{assumption}[thm]{Assumption}
\newtheorem{definition}[thm]{Definition}
\newtheorem{example}[thm]{Example}
\newenvironment{exmp}{\begin{example}\rm}{\end{example}}
\newtheorem{remark}[thm]{Remark}
\newtheorem{tab}{Table}
\newcommand{\eps}{\varepsilon}
\title{Analyticity of Entropy Rate of Continuous-State Hidden Markov Chains}
\author{\begin{tabular}{cc}
Guangyue Han&Brian Marcus\\
University of Hong Kong&University of British Columbia\\
{\em email:} ghan@hku.hk&{\em email:} marcus@math.ubc.ca\\
\end{tabular}}
\date{{\normalsize \today}}
\begin{document}\maketitle\thispagestyle{empty}

\begin{abstract}
We prove that under certain mild assumptions, the entropy rate of a
hidden Markov chain, observed when passing a finite-state stationary
Markov chain through a discrete-time continuous-output channel, is
jointly analytic as a function of the input Markov chain parameters
and the channel parameters. In particular, as consequences of the
main theorems, we obtain analyticity for the entropy rate associated
with representative channels: Cauchy and Gaussian.
\end{abstract}

\section{Main Results and Related Work} \label{I}

Entropy rate for hidden Markov chains is notoriously difficult to
compute, even in the case where both input and output alphabets are
finite and time is discrete.  However, recently much progress has
been made in this setting; see, for instance~\cite{hm09,jss08,or11,zu05}
and the references therein.

In this paper,  we consider a discrete-time channel with a finite input alphabet $\mathcal{Y}=\{1, 2, \cdots, l\} \subset \mathbb{R}$ and the continuous output alphabet $\mathcal{Z}=\mathbb{R}$. Here, we remark that all our results in this paper can be straightforwardly translated to the setting where $\mathcal{Z}$ is finite or countably infinite.

We assume that the input process is a $\mathcal{Y}$-valued
first-order stationary Markov chain $Y$ with transition probability
matrix $\Pi=(\pi_{i j})_{l \times l}$ and stationary vector
$\pi=(\pi_{i})_{1 \times l}$ (here we assume $Y$ is first-order only
for simplicity; a standard ``blocking'' approach can be used to
reduce higher order cases to the first-order case).

We assume that the channel is memoryless in the sense that at each
time, the distribution of the output $z \in \mathcal{Z}$, given the
input $y \in \mathcal{Y}$, is independent of the past and future
inputs and outputs, and is distributed according to a probability
density function $q(z|y)$.

The corresponding output process of this channel is a {\em hidden Markov chain},
which will be denoted by $Z$ throughout the paper. The entropy rate $H(Z)$ is defined as
$$
H(Z)=\lim_{n \to \infty} \frac{1}{n+1} H(Z_{-n}^0),
$$
when the limit exists, where
$$
H(Z_{-n}^0)=-\int_{\mathcal{Z}^{n+1}} p(z_{-n}^0) \log p(z_{-n}^{0})
dz_{-n}^0;
$$
here $z_{-n}^0 \triangleq (z_{-n}, z_{-n+1}, \cdots, z_0)$  denotes
an instance of $Z_{-n}^0 \triangleq (Z_{-n}, Z_{-n+1}, \cdots,
Z_0)$, and $p(z_{-n}^0)$ denotes the probability density of
$z_{-n}^0$. It is well-known (e.g., see page $60$
of~\cite{continuous-system}) that if $H(Z_{-n}^0)$ is finite for all
$n$, then the limit above exists and can be written as
$$
H(Z)=\lim_{n \to \infty} H_n(Z),
$$
where
\begin{equation}  \label{n-th}
H_n(Z)=-\int_{\mathcal{Z}^{n+1}} p(z_{-n}^0) \log p(z_0|z_{-n}^{-1})
dz_{-n}^0;
\end{equation}
here $p(z_0|z_{-n}^{-1})$ denotes the conditional density of $z_0$
given $z_{-n}^{-1}$. Since  the channels considered in this paper
are memoryless and $Y$ is stationary, we have
$$
H(Z_{-n}^0|Y_{-n}^0)=(n+1) H(Z_0|Y_0),
$$
where
$$
H(Z_0|Y_0) = -\sum_{y \in \mathcal{Y}} \pi_y \int_{\mathcal{Z}}
q(z|y) \log q(z|y) dz.
$$
It then follows from
$$
H(Z_{-n}^0|Y_{-n}^0) \leq H(Z_{-n}^0) \leq H(Y_{-n}^0) + H(Z_{-n}^0|Y_{-n}^0)
$$
that if $ \int_{z \in \mathcal{Z}} q(z|y) \log q(z|y) dz $ is finite
for all $y \in \mathcal{Y}$, then the formulas for $H(Z)$ above hold
and $H(Z)$ is finite.

Unless specified otherwise, we will assume that $\Pi =
\Pi^{\vec{\eps}} = (\pi^{\vec{\eps}}_{ij})$ is analytically
parameterized by $\vec{\eps} \in \Omega_1$, where $\Omega_1$ denotes a bounded domain in
$\mathbb{R}^{m_1}$ (here, a domain is an open and connected set),
and for any $(y, z) \in \mathcal{Y} \times \mathcal{Z}$,
$q^{\vec{\theta}}(z|y)$ is analytically parameterized by
$\vec{\theta} \in \Omega_2$, where $\Omega_2$ denotes a bounded
domain in $\mathbb{R}^{m_2}$.

In earlier work, we established analyticity of entropy rate as a
function of the underlying Markov chain parameters:
\begin{thm} \label{old} ~\cite[Theorem 1.1]{hm10} Assume that for any $y \in
\mathcal{Y}$, $q(z|y)$ is positive and continuous on $\mathcal{Z}$,
and the following two integrals
\begin{equation}\label{forgot} \int_{\mathcal{Z}}
q(z|y) |\log \min_{y'} q(z|y')| dz, \qquad \int_{\mathcal{Z}} q(z|y)
|\log \max_{y'} q(z|y')| dz
\end{equation}
are finite. If $\Pi$ is strictly positive at $\vec{\eps}_0$, then
$H(Z)$ is analytic around $\vec{\eps}_0$.
\end{thm}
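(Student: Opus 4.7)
The plan is to express $H(Z)$ as a uniform limit of the quantities $H_n$ from (\ref{n-th}) on a complex neighborhood of $\vec{\eps}_0$, and then invoke the Weierstrass theorem that a locally uniform limit of analytic functions is analytic. Since strict positivity of $\Pi^{\vec{\eps}}$ is an open condition and the parameterization $\vec{\eps}\mapsto\Pi^{\vec{\eps}}$ is analytic, I fix a small polydisc $D\subset\mathbb{C}^{m_1}$ around $\vec{\eps}_0$ on which every entry of $\Pi^{\vec{\eps}}$ stays in a small disc about its strictly positive value at $\vec{\eps}_0$, and on which the complexified stationary vector $\pi^{\vec{\eps}}$ (the Perron left eigenvector) is well-defined by standard analytic perturbation theory.

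For each fixed $n$ and each fixed observation string $z_{-n}^0\in\mathcal{Z}^{n+1}$, the conditional density admits the forward-recursion representation $p(z_0|z_{-n}^{-1})=\sum_{y} q(z_0|y)\,f_n(y;z_{-n}^{-1},\vec{\eps})$, where $f_n$ is the normalized filter obtained by alternately multiplying by the diagonal matrices $\operatorname{diag}(q(z_k|1),\dots,q(z_k|l))$ and by $\Pi^{\vec{\eps}}$. This is a ratio of two polynomial expressions in the entries of $\Pi^{\vec{\eps}}$ and in $\pi^{\vec{\eps}}$; shrinking $D$ if necessary, it stays in a small disc about its strictly positive real value, so a principal branch of $\log p(z_0|z_{-n}^{-1})$ is well-defined and depends analytically on $\vec{\eps}\in D$. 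Pulling the parameter dependence under the integral in (\ref{n-th}), justified via Morera plus the integrability of (\ref{forgot}), shows that $H_n(\vec{\eps})$ is analytic on $D$.

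The next task is uniform convergence $H_n\to H$ on $D$. On the real axis this follows from exponential forgetting of the filter: strict positivity of $\Pi^{\vec{\eps}_0}$ gives a Birkhoff-contraction rate $\rho<1$ for the filter update in the Hilbert projective metric, independently of the observations, which translates into $|H_n-H_{n-1}|\leq C\rho^n$. For complex $\vec{\eps}\in D$, the corresponding estimate is obtained by extending Birkhoff contraction to a complex projective cone: for small complex perturbations of a strictly positive real matrix, the induced projective action remains a contraction in a suitable complex Hilbert-type metric, with rate uniform on $D$. Summing the geometric bound gives $H_n\to H$ uniformly on $D$, and Weierstrass yields analyticity at $\vec{\eps}_0$.

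The hardest step will be making the filter-contraction argument uniform in the observation $z$. Since $\mathcal{Z}=\mathbb{R}$ is noncompact, the ratios of entries of $\operatorname{diag}(q(z|1),\dots,q(z|l))$ are unbounded as $z$ varies, so any estimate based on an $\ell^\infty$-type norm on the filter would fail. Using instead the projective Hilbert metric avoids this, because its contraction rate under a strictly positive matrix depends only on the entries of $\Pi^{\vec{\eps}}$ and not on the diagonal matrices inserted between successive copies of $\Pi^{\vec{\eps}}$; checking that this rate stays strictly below one throughout $D$ reduces to a continuity argument from the strictly positive base point. The two integrability conditions in (\ref{forgot}) then supply precisely the $L^1$-control needed to pass from pointwise-in-$z_{-n}^0$ filter-contraction bounds to bounds on the entropy differences $H_n-H_{n-1}$ themselves.
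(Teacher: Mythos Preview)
The paper does not actually prove this theorem; it is quoted from the earlier work~\cite{hm10}, and the paper only remarks (end of Section~\ref{I} and Section~\ref{main-idea}) that its proof uses the complex Hilbert metric of Section~\ref{Hilbert}. That said, the proof of Theorem~\ref{main} in Section~\ref{III} carries out exactly the template you describe in the harder setting where both $\vec{\eps}$ and $\vec{\theta}$ vary, so it is a fair proxy for comparison.

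Your plan matches that template: analyticity of each $H_n$ via an integrability/dominated-convergence argument (Lemma~\ref{exercise} in the paper), then uniform convergence of $H_n$ via exponential forgetting of the filter, then Weierstrass. Your central observation---that the Hilbert-metric contraction rate of $f_z$ depends only on $\Pi^{\vec{\eps}}$ and not on the diagonal observation matrix $\operatorname{diag}(q(z|1),\dots,q(z|l))$---is precisely the identity $\tilde{d}_H(u\Pi^{\vec{\eps}}(z),v\Pi^{\vec{\eps}}(z))=\tilde{d}_H(u\Pi^{\vec{\eps}},v\Pi^{\vec{\eps}})$ used in the proof of Lemma~\ref{z-complex-contraction}, and the complex-cone contraction you invoke is Theorem~\ref{complex-contraction}. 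The role you assign to the integrals~(\ref{forgot}) also matches: they give the $L^1$ bound on $|p(z_{-n}^0)\log p(z_0|z_{-n}^{-1})|$ needed both for Lemma~\ref{exercise} and for integrating the pointwise $|H_{n+1}-H_n|$ estimate.

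One point to tighten: when you say $p(z_0|z_{-n}^{-1})$ ``stays in a small disc about its strictly positive real value,'' this cannot be an additive statement uniform in $z_0$, since the real value ranges over all of $(0,\infty)$. What one actually shows (cf.\ (\ref{stay-in-cone}) in Lemma~\ref{a-b}) is that the complexified $p(z_0|z_{-n}^{-1})$ lies in a fixed narrow cone $\mathbb{C}_{\mathbb{R}^+}[\delta]$ around the positive real axis, which is what makes a principal branch of $\log$ available uniformly in $z_{-n}^0$. With that correction your outline is correct and is essentially the paper's (and~\cite{hm10}'s) approach.
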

\noindent (we remark that the hypotheses (\ref{forgot}) were inadvertently omitted in~\cite[Theorem 1.1]{hm10}.)

The aim of the current paper is to prove analyticity as a function
of both the Markov chain parameters and channel parameters. Simple
examples show that $H(Z)$ can fail to be analytic as a function of
the channel parameter alone; see Example~\ref{example}. Our positive
results require several technical  regularity conditions, which we
describe as follows.  These conditions involve the complexification
of the channel density functions
(by definition, any real analytic function, such as $q^{\vec{\theta}}(z|y)$ as above, at a given point can be
uniquely extended to a complex analytic function on some complex
neighborhood of the given point; we will continue to use the
same notation, such as $q^{\vec{\theta}}(z|y)$, for this complex extension).
We require these technical conditions, which abstract the properties
of commonly used probability density functions (e.g., Cauchy and
Gaussian), in order to make our proofs work. There may be more
general conditions that suffice. On a first reading, the reader may
want to skip directly to the statements of results below.

Our regularity conditions are as follows.  For given $(\vec{\eps}_0,
\vec{\theta}_0) \in \Omega_1 \times \Omega_2$,
\begin{enumerate}
\item[(a)] $\Pi$ is strictly positive at $\vec{\eps}_0$;
\item[(b)] for all $y \in \mathcal{Y}$,  $q^{\vec{\theta}_0}(z|y)$ is positive on $\mathcal{Z}$;
\item[(c)] there exists $r_2 > 0$ such that (below, $\mathbb{C}_{\vec{\theta}_0}^{m_2}(r_2)$ denotes the $r_2$-neighborhood of $\vec{\theta}_0$ in $\mathbb{C}^{m_2}$)
\begin{itemize}
\item for any $(y, z) \in \mathcal{Y} \times \mathcal{Z}$,
$q^{\vec{\theta}}(z|y)$ is analytic on $\mathbb{C}_{\vec{\theta}_0}^{m_2}(r_2)$,
\item for any $y \in \mathcal{Y}$,
 $q^{\vec{\theta}}(z|y)$ is
jointly continuous on $\mathcal{Z}\times
\mathbb{C}_{\vec{\theta}_0}^{m_2}(r_2)$,
\item the following three integrals
\begin{equation} \label{max-of-three}
\hspace{-2cm} (i) ~~ \int \breve{q}(z, r_2) dz, ~~~ (ii) ~~ \int \breve{q}(z, r_2)
\log \hat{q}(z, r_2) dz, ~~~ (iii)~~ \int \breve{q}(z, r_2) \log
\breve{q}(z, r_2) dz,
\end{equation}
are all finite, where
$$
\breve{q}(z, r_2) \triangleq \sup_{(y, \vec{\theta}) \in \mathcal{Y} \times \mathbb{C}_{\vec{\theta}_0}^{m_2}(r_2)} |q^{\vec{\theta}}(z|y)|, \qquad \hat{q}(z, r_2) \triangleq \inf_{(y, \vec{\theta}) \in \mathcal{Y} \times \mathbb{C}_{\vec{\theta}_0}^{m_2}(r_2)} |q^{\vec{\theta}}(z|y)|;
$$
\end{itemize}

\item[(d)] for some $I \in \{1, 2, \ldots, l\}$,
\begin{itemize}
\item[(i)] there exist $r_2 > 0$ such that for all $j$, the family of
functions $\{g_z(\theta) = q^{\vec{\theta}}(z|j)/q^{\vec{\theta}}(z|I)\}_z$ on $\vec{\theta} \in
\mathbb{C}_{\vec{\theta}_0}^{m_2}(r_2)$ is equicontinuous,

\item[(ii)] there exists $r_2 > 0$ such that for each $z$, the real $\log q^{\vec{\theta}}(z|I)$ can be analytically extended to $\mathbb{C}_{\vec{\theta}_0}^{m_2}(r_2)$ and for all $j$,
\begin{equation} \label{first-term-analytic-1}
\int_{\mathcal{Z}} ~~ \sup_{\vec{\theta} \in \mathbb{C}_{\vec{\theta}_0}^{m_2}(r_2)} \left| q^{\vec{\theta}}(z|j) \log q^{\vec{\theta}}(z|I) \right|
dz < \infty.
\end{equation}
\end{itemize}
\end{enumerate}
\noindent 
It is easily seen that the most commonly used channel models,
including Cauchy and Gaussian channels, satisfy all of
these conditions (these channels are defined below).

Our first result deals with the case where the
$q^{\vec{\theta}}(z|i)$ are in some sense``comparable'' with one
another.

\begin{thm} \label{Cauchy}
For given $(\vec{\eps}_0, \vec{\theta}_0) \in \Omega_1 \times
\Omega_2$, assume Conditions (a), (b), (c) and (d). If, there exist
$C', C'' > 0$ such that for all $i, j$ and all $z \in \mathcal{Z}$,
\begin{equation} \label{all-columns-comparable}
C' \leq \left|
\frac{q^{\vec{\theta}_0}(z|j)}{q^{\vec{\theta}_0}(z|i)} \right| \leq
C'',
\end{equation}
then $H(Z)$ is analytic around $(\vec{\eps}_0, \vec{\theta}_0)$.
\end{thm}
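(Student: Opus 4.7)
The plan is to establish analyticity of $H(Z)$ on a complex polydisc $D \subset \mathbb{C}^{m_1+m_2}$ around $(\vec\eps_0, \vec\theta_0)$ by showing that (i) each finite-horizon approximation $H_n(Z)$ from (\ref{n-th}) extends to an analytic function on $D$, and (ii) the sequence $\{H_n(Z)\}$ converges uniformly on $D$; Weierstrass' theorem then yields analyticity of the limit. This parallels the strategy of Theorem~\ref{old}, but the added difficulty here is that both $\vec\eps$ and $\vec\theta$ must be complexified simultaneously.

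For step (i), I would rewrite the integrand of $H_n(Z)$ using the decomposition
\[
\log p(z_0 \mid z_{-n}^{-1}) = \log q^{\vec\theta}(z_0 \mid I) + \log \sum_{i=1}^l \frac{q^{\vec\theta}(z_0\mid i)}{q^{\vec\theta}(z_0 \mid I)}\, p^{\vec\eps,\vec\theta}(Y_0 = i \mid z_{-n}^{-1}),
\]
where the prediction weights $p^{\vec\eps,\vec\theta}(Y_0 = i\mid z_{-n}^{-1})$ are produced by the standard Baum filter recursion. By (d)(ii) the first summand is analytic in $\vec\theta$, and after integration against $p(z_{-n}^0)$ is dominated by (\ref{first-term-analytic-1}). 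For the second summand, (\ref{all-columns-comparable}) together with the joint continuity in (c) and the equicontinuity in (d)(i) guarantees that on a small enough $D$ the ratios $q^{\vec\theta}(z\mid i)/q^{\vec\theta}(z\mid I)$ lie in a disc around a positive real number bounded away from $0$, so the argument of the outer logarithm sits in a simply connected set avoiding the origin and the complex logarithm is single-valued and analytic. Condition (a) makes the filter recursion analytic in $\vec\eps$; the integrability bounds (\ref{max-of-three}) dominate the integrand uniformly on $D$, and differentiation under the integral yields analyticity of $H_n$ on $D$.

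For step (ii), the heart is an exponential forgetting estimate: I would prove that for some $\rho < 1$,
\[
\sup_{D} |H_n(Z) - H_{n-1}(Z)| = O(\rho^n),
\]
so the telescoping series converges uniformly. The mechanism is that $p^{\vec\eps,\vec\theta}(Y_0 = \cdot \mid z_{-n}^{-1})$ forgets its initial distribution at a geometric rate uniform in $z$ and in $(\vec\eps,\vec\theta) \in D$. At the real base point, strict positivity of $\Pi^{\vec\eps_0}$ from (a) combined with (\ref{all-columns-comparable}) produces a genuine Birkhoff/Hilbert-metric contraction of the filter; by shrinking $D$ one also secures uniform bounds $c' \le |q^{\vec\theta}(z\mid j)/q^{\vec\theta}(z\mid i)| \le c''$ on $D$, so the complex filter iterates stay close to the real ones and inherit the same contraction rate.

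The main obstacle is executing this contraction estimate in the complex regime: the Hilbert projective metric is genuinely a real/positive-cone tool, so the complex case cannot be handled by a direct citation. My plan is to perturb off the real filter, writing the complex Baum update as the real update at $(\vec\eps_0,\vec\theta_0)$ plus a correction whose size is controlled by the radius of $D$ and by the equicontinuity in (d)(i), and then to show that if $D$ is sufficiently small the real contraction absorbs the correction uniformly in the past $z_{-n}^{-1}$. Establishing this uniform perturbation bound is the technical heart of the argument; once it is in place, steps (i) and (ii) close and Theorem~\ref{Cauchy} follows.
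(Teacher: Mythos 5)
Your proposal matches the paper's proof of Theorem~\ref{Cauchy} in all essentials: normalize by $q^{\vec\theta}(z_0\mid I)$ (the paper writes $\mathring p^{\vec\eps,\vec\theta}(z_0\mid z_{-n}^{-1}) = p^{\vec\eps,\vec\theta}(z_0\mid z_{-n}^{-1})/q^{\vec\theta}(z_0\mid I)$), prove analyticity of each $H_n$ by a dominated-convergence lemma using (\ref{max-of-three}) and (\ref{first-term-analytic-1}), and obtain exponential forgetting by a real Hilbert-metric contraction on a compact $D\subset W^\circ$ (the real orbit stays in $D$ thanks to (\ref{all-columns-comparable})) transferred to a complex neighborhood via the equicontinuity in Condition (d)(i). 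Two executional details the paper makes explicit, which you should keep in mind when filling in what you call the ``technical heart'': a single filter step contracts the Hilbert metric but need not contract the Euclidean metric, so the paper first re-blocks the $Z$-process into blocks of fixed length $n_0$ so that the composed map has derivative norm $\rho_0<1$ on $D$ (Lemma~\ref{compact-subset-contraction}); and the confinement of the \emph{complex} orbit inside $\tilde W_D(\delta)$ is established by an absorption argument (Lemma~\ref{Cauchy-a-b}, Part~1) combining the per-step contraction with the uniform perturbation bound of Lemma~\ref{Cauchy-Uniformity}.
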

\noindent Theorem~\ref{Cauchy} applies to the additive Cauchy
channel, parameterized by $(\gamma_i, \mu_i)$, $\gamma_i > 0$, with
\begin{equation}  \label{q-z-y-Cauchy}
q(z|i)=\frac{1}{\pi} \frac{\gamma_i}{(z-\mu_i)^2+\gamma_i^2}.
\end{equation}
So, for this channel,  if $\Pi$ is strictly positive  at $\vec{\eps}_0 \in \Omega_1$, then $H(Z)$ is analytic around
$(\vec{\eps}_0, (\gamma_{1}, \mu_1, \gamma_{2}, \mu_2, \ldots,
\gamma_{l}, \mu_{l}))$.

Our next result deals  with the case where for one particular $i$,
the density $q^{\vec{\theta}}(z|i)$ dominates all the others.
The reader should note that while Theorem~\ref{Cauchy} above
requires a condition to hold  at one given parameter value,
$\vec{\theta}_0$, Theorem~\ref{Gaussian} below requires a condition
to hold for all parameter values $\vec{\theta}$ in a complex
neighborhood of the given $\vec{\theta}_0$.

\begin{thm} \label{Gaussian}
For given $(\vec{\eps}_0, \vec{\theta}_0) \in \Omega_1 \times
\Omega_2$, assume  Conditions (a), (b), (c) and (d). If, in
addition, for the same $I$ as in Condition (d), there exists $r_2 >
0$ such that for any $\eps > 0$, there exists a compact subset
$\Sigma \subset \mathcal{Z}$ such that for all $z \not \in \Sigma$,
all $j \neq I$ and all $\vec{\theta} \in
\mathbb{C}_{\vec{\theta}_0}^{m_2}(r_2)$
\begin{equation} \label{one-column-dominant}
\left| \frac{q^{\vec{\theta}}(z|j)}{q^{\vec{\theta}}(z|I)} \right|
\leq \eps,
\end{equation}
then $H(Z)$ is analytic around $(\vec{\eps}_0, \vec{\theta}_0)$.
\end{thm}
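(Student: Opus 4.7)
The plan is to adapt the general strategy behind Theorem~\ref{Cauchy}: exhibit a complex poly-neighborhood $U$ of $(\vec{\eps}_0, \vec{\theta}_0)$ on which each $H_n(Z)$ admits an analytic extension and the sequence $\{H_n\}$ converges uniformly, and then conclude analyticity of $H(Z)$ by the Weierstrass theorem for several complex variables. Analyticity of each fixed $H_n$ on $U$ follows from Conditions~(a)--(c) and the envelopes in~\eqref{max-of-three} by a standard differentiation-under-the-integral argument; the delicate point, as always for hidden Markov models, is uniform convergence in $n$.

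Motivated by Condition~(d), I would use the decomposition
$$
\log p(z_0 | z_{-n}^{-1}) \;=\; \log q^{\vec{\theta}}(z_0 | I) \;+\; \log R_n(z_{-n}^0),
$$
where $R_n(z_{-n}^0) = \sum_{j=1}^{l} P(Y_0 = j | z_{-n}^{-1})\, g_{z_0}^{\vec{\theta}}(j)$ and $g_{z}^{\vec{\theta}}(j) = q^{\vec{\theta}}(z|j) / q^{\vec{\theta}}(z|I)$. After integrating against $p(z_{-n}^0)$, the first term collapses to the $n$-independent quantity $-\int p(z_0)\, \log q^{\vec{\theta}}(z_0|I)\, dz_0 = -\sum_j \pi_j \int q^{\vec{\theta}}(z_0|j)\,\log q^{\vec{\theta}}(z_0|I)\, dz_0$, whose joint analyticity in $(\vec{\eps}, \vec{\theta})$ is exactly what Condition~(d)(ii) and~\eqref{first-term-analytic-1} provide.

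The $\log R_n$ contribution I would then handle by splitting the $z_0$-integral at the compact set $\Sigma$ supplied by~\eqref{one-column-dominant}. On $\Sigma$, positivity (Condition~(b)) and joint continuity (Condition~(c)) give a uniform two-sided bound on $|g_z^{\vec{\theta}}(j)|$ over $\Sigma \times \overline{\mathbb{C}_{\vec{\theta}_0}^{m_2}(r_2/2)}$, reducing matters locally to the bounded-ratio situation already handled by Theorem~\ref{Cauchy}. Off $\Sigma$, the dominance bound~\eqref{one-column-dominant} forces $R_n$ to lie within $\eps\sum_{j\neq I}|P(Y_0=j|z_{-n}^{-1})|$ of $P(Y_0=I|z_{-n}^{-1})$; provided the complex extension of this filter stays uniformly bounded away from $0$, choosing $\eps$ small enough renders $\log R_n$ analytic and uniformly bounded on $U$, while Condition~(d)(i) transfers such bounds continuously in $z$.

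The principal obstacle, therefore, is to construct the complex-analytic continuation of the Baum-type filter $P(Y_0 = j | z_{-n}^{-1})$ and to prove that it is uniformly Cauchy in $n$ on $U$ as well as bounded away from $0$ in the coordinate $j=I$. I would attack this via a Hilbert/Birkhoff projective-metric contraction on an appropriate complex cone, in the spirit of~\cite{hm10}: Condition~(a) supplies the contraction of the transition step of the filter recursion, while Conditions~(b) and~(c) control the observation update after complexification. Once this contraction is in hand, the envelopes in~\eqref{max-of-three} deliver dominated convergence, hence uniform convergence $H_n \to H$ on $U$, and Weierstrass completes the argument.
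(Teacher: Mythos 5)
Your decomposition $\log p(z_0\,|\,z_{-n}^{-1}) = \log q^{\vec{\theta}}(z_0|I) + \log R_n$ is exactly the paper's $\mathring{p}$ device, and you correctly isolate the central difficulty: establishing a complex-analytic continuation of the filter $P(Y_0=\cdot\,|\,z_{-n}^{-1})$ that is uniformly Cauchy in $n$ and keeps the $I$-coordinate away from $0$. The gap is in how you propose to establish that.

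You want to run a single uniform complex-cone (complex Hilbert metric) contraction, ``in the spirit of \cite{hm10},'' over all $z$. That is the machinery of Theorem~\ref{main}, and it is unavailable here. The cone method needs a perturbation estimate of the form $\tilde d_H\bigl(f_z^{\vec{\eps},\vec{\theta}}(x), f_z^{\vec{\eps}_0,\vec{\theta}_0}(x)\bigr)\le\delta$ uniformly in $z$ (the analogue of Lemma~\ref{Uniformity}), and such an estimate requires the log-density perturbation $\log\bigl(q^{\vec{\theta}}(z|y)/q^{\vec{\theta}_0}(z|y)\bigr)$ to be uniformly small in $z$ --- precisely hypothesis \eqref{dominated-by-real}(ii), which is \emph{not} assumed in Theorem~\ref{Gaussian}. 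For Gaussian densities this quantity is a non-constant polynomial in $z$ and is therefore unbounded for any $\vec{\theta}\neq\vec{\theta}_0$, so the uniform complex Hilbert-metric perturbation bound genuinely fails. The paper explicitly flags this: the complex Hilbert metric ``is not needed for Theorems~\ref{Cauchy} and~\ref{Gaussian}.'' Your proposal inadvertently substitutes the wrong one of the three methods for this theorem.

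What you are missing is that the dichotomy at $\Sigma$ has to be applied to the \emph{filter iteration} --- not only to the final $z_0$-integral. The paper reblocks $z_{-n}^{-1}$ into blocks that either consist of a single $z\notin\Sigma$ (Type~I) or start in $\Sigma$ and have length at least $n_0$ (Type~II); see Lemma~\ref{M-two-part-contraction}. For a Type~I block the dominance \eqref{one-column-dominant} makes all off-$I$ columns of the normalized matrix $\Pi^{\vec{\eps},\vec{\theta}}(z)/q^{\vec{\theta}}(z|I)$ tiny, so $f_z$ collapses the whole of $\tilde W_W(\delta)$ toward $e_I$ and a direct estimate of $D_x f_z$ gives a Euclidean $\rho_1$-contraction in one step. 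For a Type~II block, $z_{j(i)}\in\Sigma$ forces the image into a fixed compact subset of $W^\circ$, whereupon the (real) Hilbert-metric Birkhoff contraction plus the Hilbert--Euclidean metric equivalence on that compact set yields a Euclidean contraction after $n_0$ steps. Correspondingly, the perturbation lemma one needs (Lemma~\ref{Gaussian-Uniformity}) is a \emph{Euclidean} bound $|f_z^{\vec{\eps},\vec{\theta}}(x)-f_z^{\vec{\eps}_0,\vec{\theta}_0}(x)|\le\delta$, which, unlike the Hilbert-metric bound, survives near the boundary because Euclidean distances shrink there. Your ``reduce locally on $\Sigma$ to the Cauchy case'' observation does not close the argument either: Theorem~\ref{Cauchy} relies on the filter orbit staying in a fixed compact subset of $W^\circ$ --- a global property of the whole trajectory --- and that is precisely what fails in the Gaussian setting, so it cannot be recovered by restricting a single observation to $\Sigma$.
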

\noindent Theorem~\ref{Gaussian} applies to the additive Gaussian
channel parameterized by $(\sigma_i, \mu_i)$, $\sigma_i
> 0$, with
\begin{equation}  \label{q-z-y-Gaussian}
q(z|i)=\frac{1}{\sqrt{2\pi}\sigma_i} e^{-(z-\mu_i)^2/(2\sigma_i^2)},
\end{equation}
where some $\sigma_i$ is strictly larger than all other ones. Setting $I = i$ to be the index
corresponding to the largest value of $\sigma_i$, it is easy to see
that indeed (\ref{q-z-y-Gaussian}) satisfies Condition (\ref{one-column-dominant}). So, for this channel,  if $\Pi$ is strictly positive  at $\vec{\eps}_0 \in \Omega_1$, then $H(Z)$ is analytic around $(\vec{\eps}_0,
(\sigma_{1}, \mu_1, \sigma_{2}, \mu_2, \cdots, \sigma_{l}, \mu_{l}))$.

%
%
Our final result deals with a case of more theoretical interest,
namely:  for all $i$, the real part of $q^{\theta}(z|i)$
``dominates'' the imaginary part of the complex extension.

\begin{thm} \label{main}
For given $(\vec{\eps}_0, \vec{\theta}_0) \in \Omega_1 \times
\Omega_2$, assume Conditions (a), (b) and (c). If, in addition, for
any $\delta > 0$,  there exists $r_2 > 0$ such that for all $(y, z)
\in (\mathcal{Y}, \mathcal{Z})$ and all $\vec{\theta} \in
\mathbb{C}_{\vec{\theta}_0}^{m_2}(r_2)$
\begin{equation} \label{dominated-by-real}
(i)~~ |\Im(q^{\vec{\theta}}(z|y))| < \delta |\Re(q^{\vec{\theta}}(z|y))|,
\quad (ii)~~\left| \log
\frac{q^{\vec{\theta}}(z|y)}{q^{\vec{\theta}_0}(z|y)} \right| \leq
\delta,
\end{equation}
then $H(Z)$ is analytic around $(\vec{\eps}_0, \vec{\theta}_0)$.
\end{thm}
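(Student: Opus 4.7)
The strategy is the same one used for Theorem~\ref{old}: extend $H_n(Z)$ from (\ref{n-th}) to a jointly complex-analytic function of $(\vec{\eps}, \vec{\theta})$ on some polydisk $D = \mathbb{C}_{\vec{\eps}_0}^{m_1}(r_1) \times \mathbb{C}_{\vec{\theta}_0}^{m_2}(r_2)$, establish a uniform-in-$n$ bound on $D$, show that the sequence $\{H_n\}$ converges uniformly on $D$, and conclude analyticity of the limit $H(Z)$ by the Weierstrass uniform-limit theorem.

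The first task is to make $\log p(z_{-n}^0)$ and $\log p(z_0|z_{-n}^{-1})$ well-defined analytic functions of the parameters. Fix a small $\delta>0$ and, using the hypothesis, pick $r_2$ so that both (i) and (ii) of (\ref{dominated-by-real}) hold. By (i), every complex value $q^{\vec{\theta}}(z|y)$ lies in the open sector $S_\delta = \{w : \Re w > 0,\ |\Im w| < \delta|\Re w|\}$, which is preserved under nonnegative-weight linear combinations. Condition (a) makes the entries of $\Pi^{\vec{\eps}_0}$ strictly positive reals, so after shrinking $r_1$ we may assume $\pi^{\vec{\eps}}_{ij} \in S_\delta$ as well. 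Expanding
\[
p(z_{-n}^0) = \sum_{y_{-n}^0} \pi^{\vec{\eps}}_{y_{-n}} \prod_{i=-n+1}^{0} \pi^{\vec{\eps}}_{y_{i-1} y_i} \prod_{i=-n}^{0} q^{\vec{\theta}}(z_i|y_i),
\]
one checks that $p(z_{-n}^0)$, and likewise the ratio $p(z_0|z_{-n}^{-1})$, lies in a slightly wider sector $S_{\delta'}$ uniformly in $n$ and $z$; on such a sector there is a single-valued analytic branch of $\log$, so $\log p(z_0|z_{-n}^{-1})$ is analytic in $(\vec{\eps}, \vec{\theta})$ for each $z_{-n}^0$. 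Condition (c) then supplies the dominating integrals in (\ref{max-of-three}): $|p(z_{-n}^0)|$ is controlled by a product of $\breve q(z_i, r_2)$, and $|\log p(z_0|z_{-n}^{-1})|$ by $|\log \hat q(\cdot, r_2)| + |\log \breve q(\cdot, r_2)|$, so the integral in (\ref{n-th}) converges absolutely and uniformly over $D$. Morera plus Fubini then give analyticity of $H_n$ on $D$ together with a uniform bound $\sup_n \sup_D |H_n| < \infty$.

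The main obstacle is uniform convergence $H_n \to H$ on $D$. For real parameters this is the content of Theorem~\ref{old}, which relies on a geometric-rate contraction of the Blackwell filter on the probability simplex, driven by the strict positivity of $\Pi^{\vec{\eps}_0}$. To transport this to the complex setting, I would view the complex filter as acting on the cone of vectors with entries in the sector $S_{\delta'}$; sector-preservation (from the previous step) together with the strict positivity of $\Pi^{\vec{\eps}_0}$ yields a geometric contraction in a Hilbert-type projective pseudo-metric on $S_{\delta'}$, provided $\delta$ is chosen small enough. Here condition (ii) is essential: by bounding $|q^{\vec{\theta}}(z|y)/q^{\vec{\theta}_0}(z|y)|$ uniformly above and below by $e^{\pm\delta}$, it ensures that the complex filter is a controlled multiplicative perturbation of the real filter at $(\vec{\eps}_0,\vec{\theta}_0)$, so that the exponential forgetting available in the real case survives into $D$. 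This gives $\sup_D |H_n - H_{n-1}|$ geometrically small, hence uniform convergence; Weierstrass then completes the proof by showing that $H(Z) = \lim H_n$ is jointly analytic on $D$ and in particular real-analytic around $(\vec{\eps}_0,\vec{\theta}_0)$.
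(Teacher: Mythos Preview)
Your overall plan matches the paper's: complexify $H_n$, use a complex Hilbert--type contraction of the Blackwell filter to get exponential forgetting uniformly on a polydisk, and apply Weierstrass. But the first paragraph contains a real gap.

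The claim that ``$p(z_{-n}^0)$, and likewise $p(z_0|z_{-n}^{-1})$, lies in a slightly wider sector $S_{\delta'}$ uniformly in $n$'' does not follow from the fact that $S_\delta$ is closed under nonnegative-weight combinations. Each summand in $p(z_{-n}^0)$ is a product of $2(n+1)$ factors from $S_\delta$, so its argument can be of order $n\delta$; the sector containing $p(z_{-n}^0)$ therefore widens with $n$ (this is harmless, since you never take $\log p(z_{-n}^0)$). More seriously, $p(z_0|z_{-n}^{-1})$ is a \emph{ratio} of two such quantities, and sector preservation says nothing about ratios. The well-definedness of $\log p(z_0|z_{-n}^{-1})$ uniformly in $n$ is not a triviality one just ``checks''; it is essentially equivalent to the filter staying in a controlled complex neighborhood of $W^\circ$, which is precisely what the contraction argument of your second paragraph delivers. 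In the paper the order is reversed from yours: first the complex Hilbert metric contraction (Theorem~\ref{complex-contraction}) is used to prove that $x_i^{\vec\eps,\vec\theta}\in\tilde W_{W^\circ,H}(\delta)$ for all $i$ (Lemma~\ref{a-b}, Part~1), and only then is $p(z_0|z_{-n}^{-1})=x_{-1}\Pi(z_0)\mathbf 1$ shown to lie in the cone $\mathbb C_{\mathbb R^+}[\delta]$. The contraction is needed for analyticity of each $H_n$, not only for the uniform convergence.

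One technical point worth noting from the paper's argument: because $\Pi^{\vec\eps,\vec\theta}(z)$ differs from $\Pi^{\vec\eps}$ only by right-multiplication by a diagonal matrix, the complex Hilbert distance satisfies $\tilde d_H(u\Pi^{\vec\eps,\vec\theta}(z),v\Pi^{\vec\eps,\vec\theta}(z))=\tilde d_H(u\Pi^{\vec\eps},v\Pi^{\vec\eps})$ for all $z$. This is what makes the contraction coefficient uniform in $z$ without any equicontinuity assumption on the channel (and is why Condition~(d) is not assumed in Theorem~\ref{main}). Your sketch gets the role of condition~(\ref{dominated-by-real})(ii) right: it is exactly what the paper uses (Lemma~\ref{Uniformity}) to bound $\tilde d_H(f_z^{\vec\eps,\vec\theta}(x),f_z^{\vec\eps_0,\vec\theta_0}(x))$ and thereby keep the complex orbit $\delta$-close to the real one.
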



Theorem~\ref{main} applies to additive Cauchy channels parameterized
in (\ref{q-z-y-Cauchy}) and other more artificial channels.

These results can be regarded as extensions of~\cite[Theorem
1.1]{gm05}, which deals with the case where  $\mathcal{Z}$
is finite. The flow of the proofs of these results
follows that of this case. However, new techniques are
needed to deal with the continuous case.  This is most notable in
Theorem~\ref{main}, where the use of a complex Hilbert
metric~\cite{hm09a}, to replace the
classical real Hilbert metric, is critical.  This metric was also
used in the proof of Theorem~\ref{old}. It is not needed for
Theorems~\ref{Cauchy} and~\ref{Gaussian}, because those results
assume stronger conditions on the channel density functions.

We remark that in~\cite[Theorem 1.1]{gm05}, zero values are allowed
for some transition probabilities.  It seems more difficult to
handle this phenomena in the continuous-state setting; this is the
subject of forthcoming work.

To the best of our knowledge,  the results in this paper, together
with those in~\cite{hm10}, are among the first results establishing
analyticity of continuous-state hidden Markov chains. Given the
interest in the counterpart results for the discrete-state setting,
we expect that such results will be of significance in the
continuous-state setting as well.

The remainder of this paper is organized as follows. In
Section~\ref{main-idea}, we review the (real) Hilbert metric,
outline the framework of the proofs of our theorems and highlight
the differences among the proofs. The following two sections are
devoted to proving Theorems~\ref{Cauchy} and~\ref{Gaussian}. In
Section~\ref{Hilbert}, we review the complex Hilbert metric. In
Section~\ref{III}, we prove Theorem~\ref{main}.

\section{The Main Idea of the Proofs} \label{main-idea}

We first briefly review the classical (real) Hilbert metric. The
real Hilbert metric will be used in the proofs of
Theorems~\ref{Cauchy} and~\ref{Gaussian}.
%

Let $W$ be the standard simplex in the $l$-dimensional real
Euclidean space,
$$
W = \{w=(w_1, w_2, \cdots, w_{l}) \in \mathbb{R}^{l}:w_i \geq 0,
\sum_i w_i=1\},
$$
and let $W^{\circ}$ denote its interior, consisting of the vectors
with positive coordinates. For any two vectors $v, w \in W^{\circ}$,
the Hilbert metric~\cite{se80} is defined as
\begin{equation}  \label{HilbertMetric}
d_H(w, v)=\max_{i, j} \log \left( \frac{w_i/w_j}{v_i/v_j} \right).
\end{equation}
It is well known  and easy to see that on $W^\circ$, the Hilbert
metric dominates the Euclidean metric up to a positive constant
(i.e., for some $K > 0$ and all $x,y \in W$, $|x - y| \le K
d_H(x,y)$); also, on any compact subset of $W^{\circ}$, the two
metrics are equivalent (see Proposition $2.1$ of~\cite{gm05}).

For an $l \times l$ positive matrix $T=(t_{ij})$ (i.e., each $t_{ij}
> 0$), the mapping $f_T$ induced by $T$ on $W$ is defined by
\begin{equation} \label{Induce}
f_T(w) = \frac{wT}{w T{\bf 1}},
\end{equation}
where $\bf 1$ is the all $1$'s column vector. The following theorem
is well-known (see~\cite{se80}).
\begin{thm} \label{real-contraction}
For a positive $T$, $f_T$ is a contraction mapping on the entire $W^{\circ}$
under the Hilbert metric and the contraction coefficient (often
referred to as the Birkhoff coefficient), is given by
\begin{equation}  \label{BirkhoffCoefficient}
\tau(T)=\sup_{v \neq w} \frac{d_H(vT, wT)}{d_H(v,
w)}=\frac{1-\sqrt{\phi(T)}}{1+\sqrt{\phi(T)}},
\end{equation}
where $\phi(T)=\min_{i, j, k, l} \frac{t_{ik}t_{jl}}{t_{jk}t_{il}}$.
\end{thm}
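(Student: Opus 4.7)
The plan is to exploit the projective nature of the Hilbert metric: since $d_H$ is invariant under positive rescaling of either argument and $f_T(w)$ is just the projective normalization of $wT$, we have $d_H(f_T(v), f_T(w)) = d_H(vT, wT)$, so it suffices to bound the linear action $w \mapsto wT$ on projective classes. The classical route is to introduce the projective diameter
\[
\Delta(T) \triangleq \sup_{v, w \in W^\circ} d_H(vT, wT),
\]
prove the explicit formula $e^{\Delta(T)} = 1/\phi(T)$, and then establish Birkhoff's contraction estimate $d_H(vT, wT) \le \tanh(\Delta(T)/4)\, d_H(v, w)$. The algebraic identity
\[
\tanh(\Delta(T)/4) = \frac{1 - e^{-\Delta(T)/2}}{1 + e^{-\Delta(T)/2}} = \frac{1 - \sqrt{\phi(T)}}{1 + \sqrt{\phi(T)}}
\]
then delivers the stated formula for $\tau(T)$, and the positivity $\phi(T) > 0$ (forced by strict positivity of $T$) gives $\tau(T) < 1$, hence contraction.

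For the diameter, I would expand
\[
d_H(vT, wT) = \max_{i,j} \log \frac{\sum_{k,l} v_k w_l\, t_{ki} t_{lj}}{\sum_{k,l} v_k w_l\, t_{kj} t_{li}},
\]
and observe that as $v, w$ range over $W^\circ$ the ratio of these positive-term sums is maximized in the limit $v \to e_k$, $w \to e_l$; this collapses the expression to $\max_{i,j,k,l} t_{ki}t_{lj}/(t_{kj}t_{li})$, which after relabeling of indices equals $1/\phi(T)$.

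For the Birkhoff estimate I would normalize so that $\min_k v_k/w_k = 1$ and $\max_k v_k/w_k = R = e^{d_H(v,w)}$, and set $\lambda_k = v_k/w_k \in [1, R]$. Each ratio $(vT)_i / (wT)_i = \sum_k \lambda_k\, p_k^{(i)}$ is then a weighted average of $\lambda$ against the probability weights $p_k^{(i)} = w_k t_{ki}/(wT)_i$, so bounding $[(vT)_i/(wT)_i] \big/ [(vT)_j/(wT)_j]$ amounts to bounding the ratio of two such weighted averages. By convexity (the objective is a quasi-convex function of $\lambda$ on the cube $[1,R]^l$), the supremum is attained on an extreme point of the cube, i.e.\ when $\lambda$ takes only the two values $1$ and $R$; a direct two-point calculation using the column cross-ratios of $T$ then yields the bound $\tanh(\Delta(T)/4)\, \log R$ on $d_H(vT, wT)$. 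Sharpness of $\tau(T)$ is witnessed by choosing $v, w$ concentrated near a pair of indices realizing the extremal cross-ratio defining $\phi(T)$.

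The main obstacle is the extremal two-point analysis: reducing the supremum over $\lambda \in [1, R]^l$ to a $\{1, R\}$-valued configuration, and then algebraically distilling the resulting four-parameter maximum down to the closed form $\tanh(\Delta/4)\,\log R$. Once that is done, scale invariance, the vertex computation of $\Delta(T)$, and the identification of $\tanh(\Delta/4)$ with $(1-\sqrt{\phi})/(1+\sqrt{\phi})$ are essentially mechanical.
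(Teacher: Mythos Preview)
The paper does not actually prove this theorem: it is stated as a well-known result and attributed to Seneta~\cite{se80}, with no argument given. So there is nothing in the paper to compare your proposal against at the level of proof strategy.

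That said, your outline is essentially the classical Birkhoff argument one finds in the cited reference and related literature: reduce to the linear action via projective scale-invariance, identify the projective diameter $\Delta(T)$ with $-\log \phi(T)$ by a vertex/extreme-point computation, and then establish the $\tanh(\Delta/4)$ contraction bound by the two-value reduction on the weights $\lambda_k = v_k/w_k$. The only place that would need more care in a full write-up is the ``direct two-point calculation'' you allude to after collapsing $\lambda$ to $\{1,R\}$-valued vectors; extracting the sharp bound $\tanh(\Delta/4)\,\log R$ from the resulting two-parameter cross-ratio is the genuinely nontrivial computation, and in Seneta's treatment it is handled via an explicit monotone analysis of the function $t \mapsto (at+b)/(ct+d)$ on $[1,R]$. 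But as a plan this is correct and standard.
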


We will also need a complex version of $W$,
$$
\tilde{W}=\{w=(w_1, w_2, \cdots, w_{l}) \in \mathbb{C}^{l} : \sum_i
w_i=1\}.
$$
And for any $D \subset W$ and $\delta > 0$, we define
$$
\tilde{W}_D(\delta) = \{\tilde{w} \in \tilde{W}: |\tilde{w}-w| < \delta \mbox{ for some } \tilde{w}\}.
$$

For each $z \in \mathcal{Z}$, define $\Pi^{\vec{\eps}, \vec{\theta}}(z)$ as an $l \times l$ matrix with the entries
\begin{equation} \label{Piz}
\Pi^{\vec{\eps}, \vec{\theta}}(z)_{ij}=\pi_{ij}^{\vec{\eps}}
q^{\vec{\theta}}(z|j), \mbox{ for all } i, j.
\end{equation}
By (\ref{Induce}), $\Pi^{\vec{\eps}, \vec{\theta}}(z)$ will induce a mapping $f_z^{\vec{\eps}, \vec{\theta}} \triangleq f_{\Pi^{\vec{\eps}, \vec{\theta}}(z)}$ from $W$ to $W$. For any fixed $n$ and $z_{-n}^0$, define
\begin{equation} \label{x-i}
x_i^{\vec{\eps}, \vec{\theta}}=x_i^{\vec{\eps}, \vec{\theta}}(z_{-n}^i)=p^{\vec{\eps}, \vec{\theta}}(y_i=\cdot \;|z_i, z_{i-1}, \cdots, z_{-n}),
\end{equation}
(here $\cdot$ represent the states of the Markov chain $Y$) then
similar to Blackwell~\cite{bl57}, $\{x_i^{\vec{\eps}, \vec{\theta}}\}$ satisfies the random dynamical
system
\begin{equation} \label{iter0}
x_{i+1}^{\vec{\eps}, \vec{\theta}}=f_{z_{i+1}}^{\vec{\eps}, \vec{\theta}}(x_i^{\vec{\eps}, \vec{\theta}}),
\end{equation}
starting with
\begin{equation} \label{starting}
x_{-n-1}^{\vec{\eps}, \vec{\theta}} = \pi^{\vec{\eps}},
\end{equation}
where $\pi^{\vec{\eps}}$ is the stationary vector for
$\Pi^{\vec{\eps}}$.
 And obviously we have
\begin{equation} \label{amalg}
p^{\vec{\eps}, \vec{\theta}}(z_0|z_{-n})=x_{-1}^{\vec{\eps},
\vec{\theta}} \Pi^{\vec{\eps}, \vec{\theta}}(z_0) \mathbf{1},
\end{equation}
and
\begin{equation} \label{joint}
p^{\vec{\eps}, \vec{\theta}}(z_{-n}^0)=\pi^{\vec{\eps}}
\Pi^{\vec{\eps}, \vec{\theta}}(z_{-n}) \Pi^{\vec{\eps},
\vec{\theta}}(z_{-n+1}) \cdots \Pi^{\vec{\eps}, \vec{\theta}}(z_0)
\mathbf{1}.
\end{equation}

Evidently $x_i^{\vec{\eps}, \vec{\theta}}$, $p^{\vec{\eps},
\vec{\theta}}(z_0|z_{-n})$ and $p^{\vec{\eps},
\vec{\theta}}(z_{-n}^0)$ all depend on the real vector $(\vec{\eps},
\vec{\theta}) \in \Omega_1 \times \Omega_2$. In what follows, we
shall show that they can be ``complexified''. For $r_1, r_2 > 0$,
let $\mathbb{C}_{\vec{\eps}_0}^{m_1}(r_1)$ denote a $r_1$-ball
around $\vec{\eps}_0$ in $\mathbb{C}^{m_1}$, and
$\mathbb{C}_{\vec{\theta}_0}^{m_2}(r_2)$ denote a $r_2$-ball around
$\vec{\theta}_0$ in $\mathbb{C}^{m_2}$. For any $\vec{\eps} \in
\mathbb{C}_{\vec{\eps}_0}^{m_1}(r_1)$, one checks that for $r_1 > 0$
small enough, the stationary vector $\pi^{\vec{\eps}}$ is unique and
analytic on $\mathbb{C}_{\vec{\eps}_0}^{m_1}(r_1)$ as a function of
$\vec{\eps}$ (because it is the unique solution of
$$
\pi^{\vec{\eps}} \Pi^{\vec{\eps}}=\pi^{\vec{\eps}}, \qquad \sum_y
\pi_y^{\vec{\eps}}=1 ).
$$

Then through (\ref{starting}) and (\ref{iter0}), $x_i^{\vec{\eps},
\vec{\theta}}$ can be analytically extended to
$\mathbb{C}_{\vec{\eps}_0}^{m_1}(r_1) \times
\mathbb{C}_{\vec{\theta}_0}^{m_2}(r_2)$; furthermore, through
(\ref{amalg}) and (\ref{joint}), $p^{\vec{\eps},
\vec{\theta}}(z_0|z_{-n})$ and $p^{\vec{\eps},
\vec{\theta}}(z_{-n}^0)$ can be analytically extended to
$\mathbb{C}_{\vec{\eps}_0}^{m_1}(r_1) \times
\mathbb{C}_{\vec{\theta}_0}^{m_2}(r_2)$. Ultimately
$H_n^{\vec{\eps}, \vec{\theta}}(Z)$ (which is defined by
(\ref{n-th}) with real superscripts $(\vec{\eps}, \vec{\theta})$ on
$p(z_{-n}^0)$ and $p(z_0|z_{-n}^{-1})$) can be analytically extended
to $\mathbb{C}_{\vec{\eps}_0}^{m_1}(r_1) \times
\mathbb{C}_{\vec{\theta}_0}^{m_2}(r_2)$ as well.

The proofs of the main results require the mappings $f^{\vec{\eps},
\vec{\theta}}_z$ to be contraction mappings with respect to the
Euclidean metric.  This will be derived from contraction, with
respect to the Hilbert metric, the equivalence of the Hilbert and
the Euclidean metrics and re-blocking into non-overlapping blocks.
Namely, we will consecutively re-block the $Z$ process to a
$\hat{Z}$ process such that $\hat{Z}_i$ is of the form $Z_{j(i)}^{k(i)}$.
For any $\hat{z}_i=z_{j(i)}^{k(i)}$, let $f^{\vec{\eps},
\vec{\theta}}_{\hat{z}_i}$ denote the composed mapping
$f^{\vec{\eps}, \vec{\theta}}_{z_k} \circ f^{\vec{\eps},
\vec{\theta}}_{z_{k-1}} \circ \cdots \circ f^{\vec{\eps},
\vec{\theta}}_{z_j}$. Then, $\hat{x}_i$ can be similarly defined as
before: For any fixed $\hat{n}$ and $z_{-\hat{n}}^0$, define
\begin{equation} \label{hat-x-i}
\hat{x}_i^{\vec{\eps}, \vec{\theta}}=\hat{x}_i^{\vec{\eps},
\vec{\theta}}(\hat{z}_{-\hat{n}}^i)=p^{\vec{\eps},
\vec{\theta}}(y_{k(i)}=\cdot \;|\hat{z}_i, \hat{z}_{i-1}, \cdots,
\hat{z}_{-\hat{n}}),
\end{equation}
then, $\{\hat{x}_i^{\vec{\eps}, \vec{\theta}}\}$ satisfies the random dynamical
system
\begin{equation} \label{hat-iter0}
\hat{x}_{i+1}^{\vec{\eps}, \vec{\theta}}=f_{\hat{z}_{i+1}}^{\vec{\eps}, \vec{\theta}}(\hat{x}_i^{\vec{\eps}, \vec{\theta}}),
\end{equation}
starting with
\begin{equation} \label{hat-starting}
\hat{x}_{-\hat{n}-1}^{\vec{\eps}, \vec{\theta}} = \pi^{\vec{\eps}}.
\end{equation}

The framework for the proofs of the main theorems can be outlined as follows:
\begin{enumerate}
\item[(I)] If necessary, we consecutively re-block the $Z$ process to a $\hat{Z}$ process such that $\hat{Z}_i$ is of the form $Z_{j(i)}^{k(i)}$.
\item[(II)] We then show that there exists a complex neighborhood of a subset of $W$ such that each complexified $f_{\hat{z}}^{\vec{\eps}, \vec{\theta}}$ is a
contraction mapping, with respect to some metric, and moreover the
complexified $\hat{x}_i^{\vec{\eps},
\vec{\theta}}(\hat{z}_{-\hat{n}}^i)$ stays within the neighborhood.
\item[(III)] It then follows that the complexified $x_i^{\vec{\eps}, \vec{\theta}}(z_{-n}^i)$ and thus the complexified $p^{\vec{\eps}, \vec{\theta}}(z_0|z_{-n})$ exponentially forget their initial conditions.
\item[(IV)] This, together with bounding arguments, will further imply that the complexified  $H_n^{\vec{\eps}, \vec{\theta}}(Z)$ uniformly converges to a complex analytic function, which is
necessarily the complexified $H^{\vec{\eps}, \vec{\theta}}(Z)$, on a complex domain, and therefore $H^{\vec{\eps}, \vec{\theta}}(Z)$ is analytic.
\end{enumerate}
Although the three proofs all fit in the same above-mentioned
framework, there does not seem to be a natural way to unify them.
Among numerous differences, the most essential one is the way we
establish (II):
\begin{itemize}
\item For Theorem~\ref{Cauchy}, as $i$ increases, the
real $\hat{x}_i^{\vec{\eps}, \vec{\theta}}$ always stays within a
compact subset $D$ of $W^\circ$. To establish (II), we will use the
fact that each real $f_z^{\vec{\eps}, \vec{\theta}}$ is a
contraction on $W^\circ$, and thus on $D$, with respect to the
Hilbert metric. Then we use the equivalence between the Euclidean
metric and the Hilbert metric on $D$, and equicontinuity in Condition (d(i)) to
establish the contractiveness (with respect to the Euclidean metric)
of the complexified $f_{\hat{z}}^{\vec{\eps}, \vec{\theta}}$ on a
complex neighborhood of $D$.

\item For Theorem~\ref{Gaussian}, as $i$ increases, the real $\hat{x}_i^{\vec{\eps}, \vec{\theta}}$
may move arbitrarily close to the boundary of $W$. To establish
(II), we apply a dichotomy argument: there is a sufficiently large
compact subset $\Sigma$ of $\mathcal{Z}$ such that for any
$\hat{z}_i=z_{j(i)}^{k(i)}$ with $z_{j(i)} \in \Sigma$, the
complexified $f_{\hat{z}}^{\vec{\eps}, \vec{\theta}}$ is a
contraction on a complex neighborhood (under the Euclidean metric)
of $W^\circ$; for any $\hat{z}_i=z_{j(i)}^{k(i)}$ with $z_{j(i)} \not \in \Sigma$,
we directly establish (II) by estimating the first-order derivative
of the complexified $f_{\hat{z}}^{\vec{\eps}, \vec{\theta}}$.

\item For Theorem~\ref{main}, as $i$ increases, the
real $x_i^{\vec{\eps}, \vec{\theta}}$ may move arbitrarily close to
the boundary of $W$. To establish (II), the complex Hilbert metric
in Section~\ref{Hilbert} is employed to directly show the
contractiveness, with respect to the complex Hilbert metric, of the
complexified $f_{z}^{\vec{\eps}, \vec{\theta}}$ on a complex
neighborhood of $W^\circ$.
\end{itemize}

\section{Proof of Theorem~\ref{Cauchy}}

The following lemma says that $f_z^{\vec{\eps}, \vec{\theta}}$ does
not change much under a small complex perturbation of $(\vec{\eps},
\vec{\theta})$.

\begin{lem} \label{Cauchy-Uniformity}
For any $\delta > 0$, there exist $r_1, r_2 > 0$ such that for any $(\vec{\eps}, \vec{\theta}) \in \mathbb{C}_{\vec{\eps}_0}^{m_1}(r_1) \times \mathbb{C}_{\vec{\theta}_0}^{m_2}(r_2)$, any $z \in \mathcal{Z}$ and any $x \in W$, we have
$$
|f^{\vec{\eps}, \vec{\theta}}_{z}(x)- f^{\vec{\eps}_0, \vec{\theta}_0}_{z}(x)| \leq \delta.
$$
\end{lem}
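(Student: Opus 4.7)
The plan is to exploit the scale-invariance of $f_z^{\vec{\eps}, \vec{\theta}}$: writing coordinate-wise
\[
f_z^{\vec{\eps}, \vec{\theta}}(x)_k = \frac{\sum_i x_i \pi_{ik}^{\vec{\eps}} q^{\vec{\theta}}(z|k)}{\sum_{i,j} x_i \pi_{ij}^{\vec{\eps}} q^{\vec{\theta}}(z|j)},
\]
and dividing numerator and denominator by $q^{\vec{\theta}}(z|I)$ (with $I$ as in Condition (d)) re-expresses $f_z^{\vec{\eps}, \vec{\theta}}(x)_k$ entirely in terms of the normalized ratios $g_z^j(\vec{\theta}) \triangleq q^{\vec{\theta}}(z|j)/q^{\vec{\theta}}(z|I)$. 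This rewriting is the crux of the argument: although $q^{\vec{\theta}}(z|j)$ itself may be unbounded or near-vanishing as a function of $z \in \mathbb{R}$, by hypothesis (\ref{all-columns-comparable}) the ratios $g_z^j(\vec{\theta}_0)$ all lie in the bounded interval $[C', C'']$ uniformly in $z, j$.

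Given this reduction, I would combine two small-perturbation estimates. First, Condition (d(i)) provides, for any prescribed $\alpha > 0$, an $r_2 > 0$ with $|g_z^j(\vec{\theta}) - g_z^j(\vec{\theta}_0)| \le \alpha$ uniformly in $z, j$ for all $\vec{\theta} \in \mathbb{C}_{\vec{\theta}_0}^{m_2}(r_2)$ (this is exactly the equicontinuity of $\{g_z\}_z$). Second, analyticity of $\Pi^{\vec{\eps}}$ at $\vec{\eps}_0$ yields, for any $\beta > 0$, an $r_1 > 0$ with $|\pi_{ij}^{\vec{\eps}} - \pi_{ij}^{\vec{\eps}_0}| \le \beta$ uniformly in $i, j$. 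Since the rows of $\Pi^{\vec{\eps}_0}$ sum to $1$ and $g_z^j(\vec{\theta}_0) \ge C'$,
\[
\sum_{i,j} x_i \pi_{ij}^{\vec{\eps}_0} g_z^j(\vec{\theta}_0) \;\ge\; C' \sum_i x_i \;=\; C'
\]
uniformly for all $x \in W$ and $z \in \mathcal{Z}$. A triangle-inequality argument then keeps the complex denominator bounded below (say by $C'/2$) and bounds the differences of both numerators and denominators by $O(\alpha + \beta)$. Applying the elementary quotient estimate $|a/b - a'/b'| \le |a-a'|/|b| + |a'||b-b'|/(|b|\,|b'|)$ coordinatewise then yields $|f_z^{\vec{\eps}, \vec{\theta}}(x) - f_z^{\vec{\eps}_0, \vec{\theta}_0}(x)| \le \delta$ once $\alpha$ and $\beta$ are chosen small enough.

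The main obstacle is ensuring uniformity over the unbounded set $z \in \mathbb{R}$: without the ratio rewriting, one would need uniform-in-$z$ control of $|q^{\vec{\theta}}(z|j) - q^{\vec{\theta}_0}(z|j)|$, which is simply not available. The normalization by $q^{\vec{\theta}}(z|I)$, together with the column-comparability hypothesis (\ref{all-columns-comparable}) and the equicontinuity hypothesis (d(i)), are precisely what reduce this global-in-$z$ statement to a routine bookkeeping exercise on bounded quantities.
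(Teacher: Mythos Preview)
Your proposal is correct and follows essentially the same approach as the paper: rewrite $f_z^{\vec{\eps},\vec{\theta}}(x)$ by dividing numerator and denominator by $q^{\vec{\theta}}(z|I)$, then invoke hypothesis (\ref{all-columns-comparable}) to bound the denominator away from zero and Condition (d(i)) (equicontinuity of the ratios) to obtain uniformity in $z$. The paper's proof is a one-line sketch stating exactly this; you have simply fleshed out the bookkeeping (the lower bound $C'$ on the real denominator, the analyticity of $\Pi^{\vec{\eps}}$ for the $\vec{\eps}$-perturbation, and the elementary quotient estimate) that the paper leaves implicit.
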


\begin{proof}

Since
$$
f_z^{\vec{\eps}, \vec{\theta}}(x) = \frac{x \Pi^{\vec{\eps},
\vec{\theta}}(z)}{x \Pi^{\vec{\eps}, \vec{\theta}}(z)
\mathbf{1}}=\frac{x (\Pi^{\vec{\eps}, \vec{\theta}}(z)/q^{\vec{\theta}}(z|I))}{x
(\Pi^{\vec{\eps}, \vec{\theta}}(z)/q^{\vec{\theta}}(z|I)) \mathbf{1}},
$$
the lemma follows from (\ref{all-columns-comparable}) and Condition (d(i)).
\end{proof}

Recall that $\tilde{W}_D(\delta) = \{\tilde{w} \in \tilde{W} :
|\tilde{w}-w| < \delta \mbox{ for some } w \in D\}$.

\begin{lem}  \label{compact-subset-contraction}
Given any $(\vec{\eps}_0, \vec{\theta}_0) \in \Omega_1 \times
\Omega_2$, for any compact subset $D$ of $W^{\circ}$, there exists
$r_1, r_2, \delta > 0$, $0 < \rho_1 < 1$ and a positive integer
$n_0$ such that, for all $z_i^j$ with $j \geq i+n_0$ and all
$(\vec{\eps}, \vec{\theta}) \in \mathbb{C}_{\vec{\eps}_0}^{m_1}(r_1)
\times \mathbb{C}_{\vec{\theta}_0}^{m_2}(r_2)$,
$f_{z_i^j}^{\vec{\eps}, \vec{\theta}}$ is a $\rho_1$-contraction
mapping on $\tilde{W}_D(\delta)$ under the Euclidean metric.
\end{lem}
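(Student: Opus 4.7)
I would prove the lemma in three stages: a uniform real Hilbert contraction at $(\vec{\eps}_0,\vec{\theta}_0)$, a transfer to a real Euclidean contraction on a compact subset of $W^{\circ}$ via $n_0$-fold iteration, and finally a complex perturbation using Lemma~\ref{Cauchy-Uniformity} together with Cauchy estimates in the $x$-variable.

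The crucial observation for Stage~1 is that the Birkhoff ratio underlying $\phi$ is independent of $z$: since $\Pi^{\vec{\eps}_0,\vec{\theta}_0}(z)_{ij}=\pi_{ij}^{\vec{\eps}_0}\,q^{\vec{\theta}_0}(z|j)$, the $q$-factors cancel in each cross ratio $t_{ik}t_{jl}/(t_{jk}t_{il})$, so $\phi(\Pi^{\vec{\eps}_0,\vec{\theta}_0}(z))=\phi(\Pi^{\vec{\eps}_0})>0$ by Condition~(a). Theorem~\ref{real-contraction} then provides a single contraction coefficient $\rho_0=\tau(\Pi^{\vec{\eps}_0})<1$ for every $f_z^{\vec{\eps}_0,\vec{\theta}_0}$ on $W^{\circ}$ in the Hilbert metric. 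Using (\ref{all-columns-comparable}) and $\pi_{ij}^{\vec{\eps}_0}>0$, a direct coordinate-wise lower bound on $f_z^{\vec{\eps}_0,\vec{\theta}_0}(w)$ via (\ref{Induce}) shows that $f_z^{\vec{\eps}_0,\vec{\theta}_0}(W)$ lies in a fixed compact $D_0\subset W^{\circ}$ uniformly in $z$. After replacing $D$ by $D\cup D_0$, I may assume $D$ is $f_z^{\vec{\eps}_0,\vec{\theta}_0}$-invariant for every $z$.

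In Stage~2, on the compact set $D$ the Hilbert and Euclidean metrics are equivalent, say $K^{-1}d_E\leq d_H\leq K\,d_E$. Choosing $n_0$ with $K^2\rho_0^{n_0}<1/2$, every real composition $f_{z_i^j}^{\vec{\eps}_0,\vec{\theta}_0}$ with $j-i+1\geq n_0$ is thus a $(1/2)$-Euclidean contraction of $D$ into $D_0$. For Stage~3, the ratio representation of $f_z^{\vec{\eps},\vec{\theta}}$ used in the proof of Lemma~\ref{Cauchy-Uniformity}, together with (\ref{all-columns-comparable}) and Condition~(d(i)), yields a Lipschitz bound for $f_z^{\vec{\eps},\vec{\theta}}$ on some complex $\delta_0$-neighborhood $\tilde{W}_D(\delta_0)$ that is uniform in $z\in\mathcal{Z}$ and in complex $(\vec{\eps},\vec{\theta})$ near $(\vec{\eps}_0,\vec{\theta}_0)$. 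Telescoping this Lipschitz bound with the single-step uniform closeness from Lemma~\ref{Cauchy-Uniformity} gives, for any prescribed $\eta>0$,
\[
\sup_{x\in \tilde{W}_D(\delta_0)}\bigl|f_{z_i^j}^{\vec{\eps},\vec{\theta}}(x)-f_{z_i^j}^{\vec{\eps}_0,\vec{\theta}_0}(x)\bigr|<\eta
\]
uniformly in all $z_i^j$ with $j-i+1\geq n_0$, provided $r_1,r_2$ are small. Cauchy's formula in $x$ on $\tilde{W}_D(\delta_0)$ bounds the Jacobian of this analytic difference on a shrunken $\tilde{W}_D(\delta)$ by $O(\eta/(\delta_0-\delta))$. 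Combining with the real Jacobian bound $\|Df_{z_i^j}^{\vec{\eps}_0,\vec{\theta}_0}\|\leq 1/2$ on $D$, extended to a slightly larger complex neighborhood by continuity, produces $\|Df_{z_i^j}^{\vec{\eps},\vec{\theta}}\|\leq\rho_1<1$ on $\tilde{W}_D(\delta)$; the same closeness also forces $f_{z_i^j}^{\vec{\eps},\vec{\theta}}(\tilde{W}_D(\delta))\subset\tilde{W}_D(\delta)$, since the real image lies inside $D_0$.

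The main obstacle is maintaining \emph{uniformity in the word $z_i^j$} when composing $n_0$ distinct analytic maps whose parameters may be chosen anywhere in $\mathcal{Z}$. This is overcome because every single-step ingredient is already uniform in $z$: the Hilbert contraction coefficient by the $q$-cancellation above, the inclusion $f_z^{\vec{\eps}_0,\vec{\theta}_0}(W)\subset D_0$ by (\ref{all-columns-comparable}), and both the Lipschitz bound on $f_z^{\vec{\eps},\vec{\theta}}$ and its closeness to $f_z^{\vec{\eps}_0,\vec{\theta}_0}$ by (\ref{all-columns-comparable}) together with Condition~(d(i)). Telescoping $n_0$ uniform-in-$z$ bounds preserves the uniformity.
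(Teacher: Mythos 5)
Your proposal is correct and follows essentially the same route as the paper: the cancellation of the $q$-factors makes the Hilbert (Birkhoff) contraction coefficient $z$-independent, the equivalence of Hilbert and Euclidean metrics on compact subsets of $W^{\circ}$ converts this to a Euclidean contraction after $n_0$ steps, and Condition (d(i)) together with (\ref{all-columns-comparable}) controls the complex perturbation. Your use of Cauchy estimates and the explicit discussion of the invariant compact $D_0$ merely fills in detail that the paper's terse ``From this and Condition (d(i)) it follows that\dots'' leaves implicit, so the two arguments are the same in substance.
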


\begin{proof}
For any $z \in \mathcal{Z}$ and sufficiently small $r_1, r_2 > 0$, one checks that for any $u, v \in D$, we have
\begin{equation}  \label{key-observation}
d_H(u \Pi^{\vec{\eps}_0,\vec{\theta}_0}(z), v \Pi^{\vec{\eps}_0,\vec{\theta}_0}(z))=d_H(u \Pi^{\vec{\eps}_0}, v \Pi^{\vec{\eps}_0}).
\end{equation}
It then follows that for any $z_i^j$,
$$
d_H(f_{z_i^j}^{\vec{\eps}_0,
\vec{\theta}_0}(u), f_{z_i^j}^{\vec{\eps}_0, \vec{\theta}_0}(v)) \leq \tau(\Pi^{\vec{\eps}_0})^{j-i+1} d_H(u, v),
$$
where $\tau(\Pi^{\vec{\eps}_0})$, the Birkhoff coefficient of
$\Pi^{\vec{\eps}_0}$ as defined in (\ref{BirkhoffCoefficient}), is
strictly less than $1$. It then follows from the fact that the
Euclidean metric and the Hilbert metric are equivalent on $D$ (see
Proposition $2.1$ of~\cite{gm05}) that there exists $C > 0$ such
that for any $z_i^j$ and any $u, v \in D$,
$$
|f_{z_i^j}^{\vec{\eps}_0, \vec{\theta}_0}(u) -
f_{z_i^j}^{\vec{\eps}_0, \vec{\theta}_0}(v)|\leq C
\tau(\Pi^{\vec{\eps}_0})^{j-i+1} |u - v|.
$$
For $n_0$ sufficiently large, $\rho_0 :=
C\tau(\Pi^{\vec{\eps}_0})^{n_0} < 1$. Thus, for $j \geq i+n_0$,
$$
|D_w f_{z_i^j}^{\vec{\eps}_0, \vec{\theta}_0}| \le \rho_0 < 1
$$
for all $w \in D$.  From this and Condition (d(i))
it follows that there exists $r_1,
r_2, \delta > 0$, $0 < \rho_1 < 1$ such that
$$
|D_w f_{z_i^j}^{\vec{\eps}, \vec{\theta}}| \le \rho_1 < 1
$$
for all $w \in \tilde{W}_D(\delta)$ and
$(\vec{\eps}, \vec{\theta}) \in \mathbb{C}_{\vec{\eps}_0}^{m_1}(r_1)
\times \mathbb{C}_{\vec{\theta}_0}^{m_2}(r_2)$.  The lemma then
follows.
\end{proof}

The following lemma essentially follows from the framework in the proof of Theorem $1.1$ in~\cite{gm05}. We briefly outline the proof for completeness.

We first introduce some notation. Let
$$
\mathring{p}^{\vec{\eps},  \vec{\theta}}(z_0|z_{-n}^{-1}) \triangleq
p^{\vec{\eps}, \vec{\theta}}(z_0|z_{-n}^{-1})/
q^{\vec{\theta}}(z_0|I),
$$
where $I$ is the same as in Condition (d).

\begin{lem}  \label{Cauchy-a-b}
\begin{enumerate}
\item There is a compact subset $D$ of $W^\circ$ such that
for any $\delta > 0$, there exist $r_1, r_2 > 0$ such that for any
$(\vec{\eps}, \vec{\theta}) \in \mathbb{C}_{\vec{\eps}_0}^{m_1}(r_1)
\times \mathbb{C}_{\vec{\theta}_0}^{m_2}(r_2)$ and for all $z_{-n}^0
\in \mathcal{Z}^{n+1}$ and $-n-1 \le i \le -1$,
\begin{equation} \label{Cauchy-confined-orbit}
x_i^{\vec{\eps}, \vec{\theta}}(z_{-n}^i) \in \tilde{W}_D(\delta).
\end{equation}

\item There exist $r_1, r_2 > 0$ such that for all $z_{-n}^0 \in \mathcal{Z}^{n+1}$, $p^{\vec{\eps}, \vec{\theta}}(z_0|z_{-n}^{-1})$ is analytic on $\mathbb{C}_{\vec{\eps}_0}^{m_1}(r_1) \times \mathbb{C}_{\vec{\theta}_0}^{m_2}(r_2)$.

\item There exist $r_1, r_2 > 0$, $0 < \rho_1 < 1$ and $L_1 > 0$
such that for any two $\mathcal{Z}$-valued sequences
$\{a_{-n_1}^0\}$ and $\{b_{-n_2}^0\}$ with $a_{-n}^0=b_{-n}^0$ and
for all $(\vec{\eps}, \vec{\theta}) \in
\mathbb{C}_{\vec{\eps}_0}^{m_1}(r_1) \times
\mathbb{C}_{\vec{\theta}_0}^{m_2}(r_2)$, we have
\begin{equation} \label{Cauchy-F-rho'}
|\mathring{p}^{\vec{\eps}, \vec{\theta}}(a_0|a_{-n_1}^{-1})-\mathring{p}^{\vec{\eps}, \vec{\theta}}(b_0|b_{-n_2}^{-1})| \leq L_1 \rho_1^n.
\end{equation}
\end{enumerate}
\end{lem}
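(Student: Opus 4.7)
The plan is to prove the three parts in order, with part (1) being the technical core from which (2) and (3) follow by fairly direct estimates. The central idea is to combine Lemma \ref{Cauchy-Uniformity} (small parameter perturbations move $f_z$ uniformly little) with Lemma \ref{compact-subset-contraction} (over blocks of length $n_0$, $f_{z_i^j}$ is a Euclidean contraction on a complex neighborhood of a compact $D \subset W^\circ$), using the first to bound the deviation of the complex orbit from the real one and the second to prevent those deviations from accumulating.

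For part (1), I would first construct $D$. Condition (a) ensures $\Pi^{\vec{\eps}_0}$ is strictly positive, so $\pi^{\vec{\eps}_0} \in W^\circ$; together with (\ref{all-columns-comparable}), this implies each coordinate of $f_z^{\vec{\eps}_0,\vec{\theta}_0}(w)$ is bounded below by a positive constant $\eta$ independent of $(z,w)\in\mathcal{Z}\times W$, so $f_z^{\vec{\eps}_0,\vec{\theta}_0}(W) \subseteq D_0 := \{w \in W : w_j \geq \eta \text{ for all } j\}$. I take $D$ to be a compact subset of $W^\circ$ containing $D_0 \cup \{\pi^{\vec{\eps}_0}\}$. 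Given $\delta>0$, pick $\delta_0 \in (0,\delta)$ for which Lemma \ref{compact-subset-contraction} supplies $n_0$, $\rho_1<1$, and $r_1,r_2$ yielding contraction on $\tilde{W}_D(\delta_0)$ over blocks of length $\geq n_0$. Tracking the difference between complex and real orbits at block boundaries $y_k^{\mathbb{C}} = x_{-n-1+kn_0}^{\vec{\eps},\vec{\theta}}$ and $y_k^{\mathbb{R}} = x_{-n-1+kn_0}^{\vec{\eps}_0,\vec{\theta}_0}$, the triangle inequality gives
\begin{equation*}
|y_{k+1}^{\mathbb{C}} - y_{k+1}^{\mathbb{R}}| \leq \rho_1|y_k^{\mathbb{C}} - y_k^{\mathbb{R}}| + \Delta,
\end{equation*}
where $\Delta$ is a block version of Lemma \ref{Cauchy-Uniformity} obtained by composing $n_0$ one-step perturbations with a bounded Lipschitz constant of $f_z^{\vec{\eps},\vec{\theta}}$ on a neighborhood of $D$. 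Geometric summation bounds $|y_k^{\mathbb{C}} - y_k^{\mathbb{R}}|$ by $|\pi^{\vec{\eps}} - \pi^{\vec{\eps}_0}| + \Delta/(1-\rho_1)$, which falls below $\delta_0/2$ after further shrinking $r_1,r_2$. Within a block, distance can only grow by a bounded factor, so $x_i^{\vec{\eps},\vec{\theta}}$ stays in $\tilde{W}_D(\delta_0) \subseteq \tilde{W}_D(\delta)$ at every intermediate index.

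For part (2), rewrite
\begin{equation*}
f_z^{\vec{\eps},\vec{\theta}}(x) = \frac{x\bigl(\Pi^{\vec{\eps},\vec{\theta}}(z)/q^{\vec{\theta}}(z|I)\bigr)}{x\bigl(\Pi^{\vec{\eps},\vec{\theta}}(z)/q^{\vec{\theta}}(z|I)\bigr)\mathbf{1}}.
\end{equation*}
At $(\vec{\eps}_0,\vec{\theta}_0)$, the denominator equals $\sum_{i,j} x_i \pi_{ij}^{\vec{\eps}_0} q^{\vec{\theta}_0}(z|j)/q^{\vec{\theta}_0}(z|I) \geq C'$ for any $x \in W$ by (\ref{all-columns-comparable}), uniformly in $z$. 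Condition (d(i)) preserves this lower bound (up to factor $1/2$) for complex $(\vec{\eps},\vec{\theta})$ in a small neighborhood of $(\vec{\eps}_0,\vec{\theta}_0)$ and $x \in \tilde{W}_D(\delta)$, so $f_z^{\vec{\eps},\vec{\theta}}$ is analytic and non-singular on that complex domain. By composition, $x_i^{\vec{\eps},\vec{\theta}}$ and hence $p^{\vec{\eps},\vec{\theta}}(z_0|z_{-n}^{-1}) = x_{-1}^{\vec{\eps},\vec{\theta}} \Pi^{\vec{\eps},\vec{\theta}}(z_0)\mathbf{1}$ are analytic. For part (3), $a_{-n}^0 = b_{-n}^0$ forces $a_0 = b_0$, giving
\begin{equation*}
\mathring{p}(a_0|a_{-n_1}^{-1}) - \mathring{p}(b_0|b_{-n_2}^{-1}) = (x_{-1}^a - x_{-1}^b)\bigl(\Pi^{\vec{\eps},\vec{\theta}}(a_0)/q^{\vec{\theta}}(a_0|I)\bigr)\mathbf{1},
\end{equation*}
whose right-hand factor is bounded by (\ref{all-columns-comparable}) and Condition (d(i)). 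By part (1), $x_{-n-1}^a, x_{-n-1}^b \in \tilde{W}_D(\delta)$, and both evolve under the common iteration $f_{a_{-n}}, \ldots, f_{a_{-1}}$. Subdividing these $n$ steps into $\lfloor n/n_0\rfloor$ blocks of length $n_0$ and applying Lemma \ref{compact-subset-contraction} successively yields $|x_{-1}^a - x_{-1}^b| \leq L\rho_1^{\lfloor n/n_0\rfloor}$, which produces (\ref{Cauchy-F-rho'}) after relabeling $\rho_1 \leftarrow \rho_1^{1/n_0}$.

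The main obstacle is part (1), specifically the simultaneous choice of $D$, $n_0$, $\delta_0$, $r_1$, $r_2$ avoiding circularity in the induction: Lemma \ref{compact-subset-contraction}'s contractivity hypothesis requires the preceding iterates to already lie in $\tilde{W}_D(\delta_0)$, while the argument above needs that contractivity to prove they do. The loop is closed by supplementing Lemma \ref{compact-subset-contraction} with a bounded-Lipschitz estimate for $f_z^{\vec{\eps},\vec{\theta}}$ on a slightly larger neighborhood of $D$, controlling mid-block distance growth so that intermediate iterates never escape $\tilde{W}_D(\delta)$.
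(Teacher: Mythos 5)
Your proof is correct and follows essentially the same route as the paper's: build a compact $D \subset W^\circ$ containing $\pi^{\vec{\eps}_0}$ and $f_z^{\vec{\eps}_0,\vec{\theta}_0}(W)$ using Conditions (a) and (\ref{all-columns-comparable}), reblock into length-$n_0$ blocks so that Lemma~\ref{compact-subset-contraction} applies, propagate the $\delta$-closeness of the complex orbit to the real one by combining the $\rho_1$-contraction with Lemma~\ref{Cauchy-Uniformity} and a geometric-series estimate, and then derive Parts 2 and 3 from the analyticity and boundedness of the iterated $f_z$ together with the contraction rate. The one place you are noticeably more explicit than the paper is the within-block step: the paper simply asserts that it suffices to prove (\ref{Cauchy-confined-orbit}) for the reblocked $\hat x_i$, whereas you correctly observe that a bounded Lipschitz constant for $f_z$ on a complex neighborhood of $D$ is needed to control the up-to-$n_0$ intermediate iterates between block boundaries; this is indeed the intended justification, and the ``circularity'' you flag is not a genuine loop but precisely the decoupling (block boundaries first, intermediate indices second) that you carry out.
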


\begin{proof}

1. For a fixed $n_0 > 0$, we will consecutively reblock
$z_{-n}^{-1}$ to $\hat{z}_{-\hat{n}}^{-1}$ such that each
$\hat{z}_i$ is of the form $z_{j(i)}^{k(i)}$, where $k(i)-j(i)+1=n_0$ ($n_0$ is
determined below).

By (\ref{iter0}) and Condition (\ref{all-columns-comparable}), for any $z_{-n}^0$ and $i$,
$x_i^{\vec{\eps}_0, \vec{\theta}_0}$ (and thus
$\hat{x}_i^{\vec{\eps}_0, \vec{\theta}_0}$) belongs to a compact
subset $D$ of $W^{\circ}$. By
Lemma~\ref{compact-subset-contraction}, we can choose $r_1, r_2,
\delta >0$ sufficiently small, $n_0$ sufficiently large and  $0 <
\rho_1 < 1$ such that for all  $(\vec{\eps}, \vec{\theta}) \in
\mathbb{C}_{\vec{\eps}_0}^{m_1}(r_1) \times
\mathbb{C}_{\vec{\theta}_0}^{m_2}(r_2)$,  $f_{\hat{z}}^{\vec{\eps},
\vec{\theta}}$ is a $\rho_1$-contraction on $\tilde{W}_D(\delta)$
under the Euclidean metric.

To prove (\ref{Cauchy-confined-orbit}), it is enough to prove the
version of (\ref{Cauchy-confined-orbit}) with $x_i^{\vec{\eps},
\vec{\theta}}(z_{-n}^i)$ replaced by $\hat{x}_i^{\vec{\eps},
\vec{\theta}}(\hat{z}_{-n}^i)$ (with perhaps smaller $r_1, r_2$).

%
To see this, note that by  Lemma~\ref{Cauchy-Uniformity}, for
sufficiently small
$r_1, r_2 > 0$,  for all $\hat{z}$, $x \in W$, and  $(\vec{\eps},
\vec{\theta}) \in \mathbb{C}_{\vec{\eps}_0}^{m_1}(r_1) \times
\mathbb{C}_{\vec{\theta}_0}^{m_2}(r_2)$,
\begin{equation} \label{Cauchy-bounded-1}
|f^{\vec{\eps}, \vec{\theta}}_{\hat{z}}(x)-f^{\vec{\eps}_0,
\vec{\theta}_0}_{\hat{z}}(x)| \leq \delta (1-\rho_1),
\end{equation}
and for all  $\vec{\eps} \in \mathbb{C}_{\vec{\eps}_0}^{m_1}(r_1)$
\begin{equation} \label{Cauchy-bounded-2}
|\pi^{\vec{\eps}}-\pi(\vec{\eps}_0)| \leq \delta (1-\rho_1).
\end{equation}
Thus,
\begin{align}
\nonumber |\hat{x}^{\vec{\eps}, \vec{\theta}}_{i}-\hat{x}^{\vec{\eps}_0, \vec{\theta}_0}_{i}|
& =|f^{\vec{\eps}, \vec{\theta}}_{\hat{z}_{i}}(\hat{x}^{\vec{\eps},
\vec{\theta}}_{i-1})-f^{\vec{\eps}_0, \vec{\theta}_0}_{\hat{z}_{i}}(\hat{x}^{\vec{\eps}_0,
\vec{\theta}_0}_{i-1})|  \\
\label{Cauchy-star} & \leq |f^{\vec{\eps},
\vec{\theta}}_{\hat{z}_{i}}(\hat{x}^{\vec{\eps},
\vec{\theta}}_{i-1})-f^{\vec{\eps},
\vec{\theta}}_{\hat{z}_{i}}(\hat{x}^{\vec{\eps}_0,
\vec{\theta}_0}_{i-1})|+|f^{\vec{\eps},
\vec{\theta}}_{\hat{z}_{i}}(\hat{x}^{\vec{\eps}_0,
\vec{\theta}_0}_{i-1})-f^{\vec{\eps}_0,
\vec{\theta}_0}_{\hat{z}_{i}}(\hat{x}^{\vec{\eps}_0,
\vec{\theta}_0}_{i-1})|.
\end{align}
Then by
(\ref{Cauchy-bounded-1}) and
(\ref{Cauchy-bounded-2}), and (\ref{Cauchy-star}), we have
$$
|\hat{x}^{\vec{\eps},  \vec{\theta}}_{i}-\hat{x}^{\vec{\eps}_0,
\vec{\theta}_0}_{i}| \leq \rho_1 |\hat{x}^{\vec{\eps},
\vec{\theta}}_{i-1}-\hat{x}^{\vec{\eps}_0, \vec{\theta}_0}_{i-1}|+
\delta (1-\rho_1).
$$
So, for all $i$,
$$
|\hat{x}^{\vec{\eps}, \vec{\theta}}_{i}-\hat{x}^{\vec{\eps}_0,
\vec{\theta}_0}_{i}| \leq \delta,
$$
and thus for all $i$, we have $\hat{x}^{\vec{\eps},
\vec{\theta}}_{i} \in \tilde{W}_D(\delta)$, as desired.

2. It follows from (\ref{all-columns-comparable}) and Condition (d(i)) that for sufficiently small $r_1, r_2, \delta > 0$ and any $z \in \mathcal{Z}$, $f_z^{\vec{\eps}, \vec{\theta}}(x)$ is analytic with respect to $(\vec{\eps}, \vec{\theta}, x) \in \mathbb{C}_{\vec{\eps}_0}^{m_1}(r_1) \times
\mathbb{C}_{\vec{\theta}_0}^{m_2}(r_2) \times \tilde{W}_D(\delta)$. It then follows from this fact and the iterative nature of $x^{\vec{\eps}, \vec{\theta}}_i$ (see (\ref{iter0})) and Part $1$ that
for sufficiently small $r_1, r_2 > 0$, each $x^{\vec{\eps}, \vec{\theta}}_i$ is analytic
on $\mathbb{C}_{\vec{\eps}_0}^{m_1}(r_1) \times \mathbb{C}_{\vec{\theta}_0}^{m_2}(r_2)$. Part $2$ then immediately follows from (\ref{amalg}).

3. Applying the same reblocking as in Part 1, we write
$$
\hat{x}^{\vec{\eps}, \vec{\theta}}_{i, \hat{a}}=\hat{x}^{\vec{\eps}, \vec{\theta}}_i(\hat{a}_{-\hat{n}_1}^i)=p^{\vec{\eps}, \vec{\theta}}(y_{k(i)}=\cdot \;|\hat{a}_{-\hat{n}_1}^i),
$$
$$
\hat{x}^{\vec{\eps}, \vec{\theta}}_{i, \hat{b}}=\hat{x}^{\vec{\eps}, \vec{\theta}}_i(\hat{b}_{-\hat{n}_2}^i)=p^{\vec{\eps}, \vec{\theta}}(y_{k(i)}=\cdot \;|\hat{b}_{-\hat{n}_2}^i).
$$
Evidently we have
$$
\hat{x}^{\vec{\eps}, \vec{\theta}}_{i+1, \hat{a}}=f^{\vec{\eps}, \vec{\theta}}_{\hat{a}_{i+1}}(\hat{x}^{\vec{\eps}, \vec{\theta}}_{i, \hat{a}}),
\qquad
\hat{x}^{\vec{\eps}, \vec{\theta}}_{i+1, \hat{b}}=f^{\vec{\eps}, \vec{\theta}}_{\hat{b}_{i+1}}(\hat{x}^{\vec{\eps}, \vec{\theta}}_{i, \hat{b}}).
$$
Note that there exists a positive constant $L'_1$ such that for all $(\vec{\eps}, \vec{\theta}) \in \mathbb{C}_{\vec{\eps}_0}^{m_1}(r_1) \times \mathbb{C}_{\vec{\theta}_0}^{m_2}(r_2)$,
$$
|\hat{x}^{\vec{\eps}, \vec{\theta}}_{-\hat{n}, \hat{a}}-\hat{x}^{\vec{\eps}, \vec{\theta}}_{-\hat{n}, \hat{b}}| \leq L'_1,
$$
for all $(\vec{\eps}, \vec{\theta}) \in
\mathbb{C}_{\vec{\eps}_0}^{m_1}(r_1) \times
\mathbb{C}_{\vec{\theta}_0}^{m_2}(r_2)$, where $r_1, r_2 > 0$ are
chosen sufficiently small.  Since $f_{\hat{z}}^{\vec{\eps},
\vec{\theta}}$ is a $\rho_1$-contraction on $\tilde{W}_D(\delta)$,
we have, by Part $1$,
$$
|\hat{x}^{\vec{\eps}, \vec{\theta}}_{-1,
\hat{a}}-\hat{x}^{\vec{\eps}, \vec{\theta}}_{-1, \hat{b}}| \leq L'_1
\rho_1^{\hat{n}-1}.
$$
This, together with (\ref{all-columns-comparable}), implies that
that there exists $L_1
>0 $, independent of $n_1, n_2$, such that for all $(\vec{\eps},
\vec{\theta}) \in \mathbb{C}_{\vec{\eps}_0}^{m_1}(r_1) \times
\mathbb{C}_{\vec{\theta}_0}^{m_2}(r_2)$,
\begin{equation}
|\mathring{p}^{\vec{\eps},
\vec{\theta}}(a_0|a_{-n_1}^{-1})-\mathring{p}^{\vec{\eps},
\vec{\theta}}(b_0|b_{-n_2}^{-1})| \leq L_1 \rho_1^n.
\end{equation}

\end{proof}

The following lemma should be well-known. We sketch the proof for
completeness.
\begin{lem} \label{exercise}
Let $n_1$ and $n_2$ be positive integers and $D$ a compact domain in
$\mathbb{C}^{n_1}$. Let $f(\theta, z)$ be a jointly continuous
function on $D \times \mathbb{R}^{n_2}$.
 Assume that
\begin{equation} \label{finite}
\int_{\mathbb{R}^{n_2}}~ \sup_{\theta \in D} |f(\theta, z)| dz <
\infty.
\end{equation}
Then
\begin{enumerate}
\item  $\int_{\mathbb{R}^{n_2}} f(\theta, z) dz$ is continuous on $D$.
\item If, for each $z \in \mathcal{Z}$, $f$ is analytic on $D$, then $\int_{\mathbb{R}^{n_2}} f(\theta, z) dz$ is analytic on $D$.
\end{enumerate}
\end{lem}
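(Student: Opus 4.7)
The plan is to prove Part 1 by the dominated convergence theorem, and Part 2 by combining Morera's theorem with Fubini's theorem (reducing to separate analyticity, then invoking Osgood's theorem to recover joint analyticity on $D$).

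For Part 1, fix $\theta_0 \in D$ and let $\theta_n \to \theta_0$ be any sequence in $D$. Joint continuity of $f$ yields $f(\theta_n, z) \to f(\theta_0, z)$ for every $z \in \mathbb{R}^{n_2}$, and the hypothesis (\ref{finite}) supplies the integrable dominating function $g(z) := \sup_{\theta \in D} |f(\theta, z)|$, independent of $n$. Dominated convergence then gives $\int f(\theta_n, z)\, dz \to \int f(\theta_0, z)\, dz$, so $F(\theta) := \int_{\mathbb{R}^{n_2}} f(\theta, z)\, dz$ is continuous on $D$.

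For Part 2, by Osgood's theorem a continuous function on a domain in $\mathbb{C}^{n_1}$ which is separately analytic in each complex coordinate is jointly analytic. Continuity of $F$ was just established, so it suffices to verify separate analyticity. Fix any coordinate direction and any closed disc $\Delta$ in that direction, with the other coordinates held fixed at points in $D$, such that $\Delta$ lies in the interior of $D$. For any closed triangle $T \subset \Delta$, separate analyticity of $f(\cdot, z)$ and Cauchy's theorem give $\int_{\partial T} f(\theta, z)\, d\theta = 0$ for each fixed $z$. Because $\partial T$ has finite length and $|f(\theta, z)| \le g(z)$ with $g$ integrable, Fubini's theorem applies and yields
\begin{equation*}
\int_{\partial T} F(\theta)\, d\theta = \int_{\partial T} \int_{\mathbb{R}^{n_2}} f(\theta, z)\, dz\, d\theta = \int_{\mathbb{R}^{n_2}} \int_{\partial T} f(\theta, z)\, d\theta\, dz = 0.
\end{equation*}
Morera's theorem then shows $F$ is analytic in the chosen coordinate on $\Delta$, hence separately analytic throughout $D$, and Osgood's theorem completes the proof.

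There is no substantive obstacle: the result is a standard ``differentiation/analyticity under the integral sign'' statement, and the only care needed is to verify the hypotheses of dominated convergence and Fubini, both of which are immediate from the uniform integrable majorant $g$ provided by (\ref{finite}) together with the joint continuity assumption.
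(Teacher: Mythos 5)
Your proof is correct, and it takes a genuinely different route from the paper's. For Part 1, you invoke the dominated convergence theorem directly (the integrable majorant $g(z) := \sup_{\theta \in D}|f(\theta,z)|$ supplied by (\ref{finite}), together with pointwise convergence from joint continuity), whereas the paper avoids measure-theoretic machinery entirely: it approximates $\int_\Sigma f(\theta,z)\,dz$ by Riemann sums on a compact $\Sigma$, shows these converge uniformly in $\theta \in D$ by compactness of $D\times\Sigma$, and then uses (\ref{finite}) to get uniform convergence as $\Sigma$ exhausts $\mathbb{R}^{n_2}$. For Part 2, you combine Morera with Fubini (with the integrability of $g$ making Fubini legitimate on the finite-length contour $\partial T$) to get separate analyticity of $F$, and then invoke Osgood's theorem (using the continuity from Part 1) to upgrade to joint analyticity. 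The paper instead observes that its Riemann sums $R_n(\theta)=\sum_i f(\theta,z_i)\,\mathrm{vol}(\Delta_{n,i})$ are already finite linear combinations of jointly analytic functions, hence jointly analytic, and a uniform limit of analytic functions is analytic --- so joint analyticity is inherited directly, with no need for the separate-to-joint upgrade and no appeal to Osgood. Both approaches are sound; the paper's is more elementary and self-contained, while yours is shorter for a reader comfortable with dominated convergence, Fubini, and Osgood. If you wish to sidestep Osgood, you could also note that $F$ is the locally uniform limit of $\int_{\Sigma_i} f(\theta,z)\,dz$, each of which is jointly analytic by your Morera--Fubini argument applied over the compact $\Sigma_i$.
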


\begin{proof}
Let $\Sigma$ be a compact domain in $\mathbb{R}^{n_2}$. Let
$\delta_i, i=1, 2, \ldots$ be a sequence of positive numbers
converging to $0$. Consider a sequence of partitions of $\Sigma$:
$$
\Sigma=\cup_{i=1}^{m_n} \Delta_{n, i},
$$
where $diam(\Delta_{n, i}) \leq \delta_n$ for all $1 \leq i \leq m_n$. Evidently, the corresponding Riemann sum
$$
R_n=\sum_{i=1}^{m_n} f(\theta, z_i) vol(\Delta_{n,i})
$$
(here $z_i \in \Delta_{n,i}$) is continuous in $\theta$. Then
$$
\int_{\Sigma} f(\theta, z) dz-R_n=\sum_{i=1}^{m_n}
\int_{\Delta_{n,i}} (f(\theta, z)-f(\theta, z_i)) dz.
$$
By the compactness of $D$ and $\Sigma$, we deduce that for any
$\eps_0 > 0$,  there exists $N_0$ such that for all $n \geq N_0$,
all $\theta \in D$ and all $z \in \Delta_{n, i}, i=1, 2, \ldots$,
$$
|f(\theta, z)-f(\theta, z_i)| \leq \eps_0
$$
which implies that for any $\eps > 0$, there exists $N_1$ such that for all $n \geq N_1$ and all $\theta \in D$,
$$
\left| \int_{\Sigma} f(\theta, z) dz-R_n \right| \leq \eps.
$$
In other words, $R_n$ uniformly (in $\theta \in D$) converges to
$$
\int_{\Sigma} f(\theta, z) dz,
$$
and so $\int_{\Sigma} f(\theta, z) dz$ is continuous in $\theta \in
D$.

Now, take any increasing sequence of compact sets $\Sigma_i$ whose
union is $\mathbb{R}^{n_2}$.  By (\ref{finite}), $\int_{\Sigma_i}
f(\theta, z) dz$  converges uniformly, in $\theta \in D$, to
$\int_{\mathbb{R}^{n_2}} f(\theta, z) dz$, which is therefore
continuous on $D$. This gives Part $1$.

Part $2$ follows in the same way with analyticity replacing
continuity.
\end{proof}

We are now ready for the proof of Theorem~\ref{Cauchy}.

\begin{proof}[Proof of Theorem~\ref{Cauchy}]

We first show that there exist $r_1, r_2 > 0$ such that for any $n$,
$H^{\vec{\eps}, \vec{\theta}}_n(Z)$ is analytic on
$\mathbb{C}_{\vec{\eps}_0}^{m_1}(r_1) \times
\mathbb{C}_{\vec{\theta}_0}^{m_2}(r_2)$.

For a fixed $n$, recall that
$$
H_n^{\vec{\eps},  \vec{\theta}}(Z)=-\int_{\mathcal{Z}^{n+1}}
p^{\vec{\eps}, \vec{\theta}}(z_{-n}^0) \log p^{\vec{\eps},
\vec{\theta}}(z_0|z_{-n}^{-1}) dz_{-n}^0,
$$
where
$$
p^{\vec{\eps}, \vec{\theta}}(z_{-n}^0)=\sum_{y_{-n}^0} p^{\vec{\eps}}(y_{-n}^0) \prod_{i=-n}^0 q^{\vec{\theta}}(z_i|y_i)
$$
and
$$
p^{\vec{\eps}, \vec{\theta}}(z_0|z_{-n}^{-1}) = x_{-1}^{\vec{\eps}, \vec{\theta}}(z_{-n}^{-1}) \Pi^{\vec{\eps}, \vec{\theta}}(z_0) \mathbf{1}.
$$
Now, for any $(\vec{\eps}, \vec{\theta}) \in \mathbb{C}_{\vec{\eps}_0}^{m_1}(r_1) \times \mathbb{C}_{\vec{\theta}_0}^{m_2}(r_2)$, we have
\begin{equation}
\label{pq} p^{\vec{\eps}, \vec{\theta}}(z_{-n}^0) =  \sum_{y_{-n}^0}
p^{\vec{\eps}}(y_{-n}^0) \prod_{i=-n}^0 q^{\vec{\theta}}(z_i|y_i)
\end{equation}
Since $\sum_{y_{-n}^0} p^{\vec{\eps}}(y_{-n}^0)$ is continuous and
therefore bounded as a function of $\vec{\eps} \in
\mathbb{C}_{\vec{\eps}_0}^{m_1}(r_1)$, there is a constant $K > 0$
such that
\begin{equation} \label{bounded-by-sup}
 \sup_{(\vec{\eps}, \vec{\theta})
\in \mathbb{C}_{\vec{\eps}_0}^{m_1}(r_1) \times
\mathbb{C}_{\vec{\theta}_0}^{m_2}(r_2)} |p^{\vec{\eps},
\vec{\theta}}(z_{-n}^0)| \leq
K\prod_{i=-n}^0 \sup_{(y, \vec{\theta}) \in \mathcal{Y} \times
\mathbb{C}_{\vec{\theta}_0}^{m_2}(r_2)}| q^{\vec{\theta}}(z_i|y)|.
\end{equation}
It follows from Part $1$ of Lemma~\ref{Cauchy-a-b} that for sufficiently small $r_1, r_2 > 0$, there exist
$C_1, C_2 > 0$ such that for any $z_{-n}^{0}$,
\begin{equation} \label{Cauchy-it-is-bounded}
C_1 \leq |\mathring{p}^{\vec{\eps}, \vec{\theta}}(z_0|z_{-n}^{-1})| \leq C_2,
\end{equation}
which implies that for some $C_3 > 0$,
\begin{equation} \label{Cauchy-log-is-bounded}
|\log \mathring{p}^{\vec{\eps}, \vec{\theta}}(z_0|z_{-n}^{-1})| \leq C_3.
\end{equation}
It then follows  that  for any $(\vec{\eps}, \vec{\theta}) \in
\mathbb{C}_{\vec{\eps}_0}^{m_1}(r_1) \times
\mathbb{C}_{\vec{\theta}_0}^{m_2}(r_2)$,
\begin{align}
\nonumber \int_{\mathcal{Z}^{n+1}} & \sup_{(\vec{\eps}, \vec{\theta})
 \in \mathbb{C}_{\vec{\eps}_0}^{m_1}(r_1) \times \mathbb{C}_{\vec{\theta}_0}^{m_2}(r_2)} \left|
  p^{\vec{\eps}, \vec{\theta}}(z_{-n}^0) \log p^{\vec{\eps}, \vec{\theta}}(z_0|z_{-n}^{-1}) \right| dz_{-n}^0  \\
\nonumber &= \int_{\mathcal{Z}^{n+1}} \sup_{(\vec{\eps}, \vec{\theta}) \in \mathbb{C}_{\vec{\eps}_0}^{m_1}(r_1)
\times \mathbb{C}_{\vec{\theta}_0}^{m_2}(r_2)} \left| p^{\vec{\eps}, \vec{\theta}}(z_{-n}^0)
\log q^{\vec{\theta}}(z_0|I)+ p^{\vec{\eps}, \vec{\theta}}(z_{-n}^{0}) \log \mathring{p}^{\vec{\eps},
 \vec{\theta}}(z_0|z_{-n}^{-1}) \right| dz_{-n}^0 \\
\nonumber &\leq \int_{\mathcal{Z}^{n+1}} \sup_{ \vec{\theta} \in
\mathbb{C}_{\vec{\theta}_0}^{m_2}(r_2)} K \left|
\prod_{i=-n}^0 q^{\vec{\theta}}(z_i|y_i)\right| \left|
\log q^{\vec{\theta}}(z_0|I) \right| dz_{-n}^0 \\
 \label{Cauchy-absolute-value-bounded} & +\int_{\mathcal{Z}^{n+1}} \sup_{(\vec{\eps}, \vec{\theta}) \in \mathbb{C}_{\vec{\eps}_0}^{m_1}(r_1) \times \mathbb{C}_{\vec{\theta}_0}^{m_2}(r_2)} |p^{\vec{\eps}, \vec{\theta}}(z_{-n}^0)| |\log \mathring{p}^{\vec{\eps}, \vec{\theta}}(z_0|z_{-n}^{-1})| dz_{-n}^0 < \infty.
\end{align}
(for the first term we have used (\ref{max-of-three}(i)) and
(\ref{first-term-analytic-1}); for the second term, we have used
(\ref{max-of-three}(i)), (\ref{bounded-by-sup}) and
(\ref{Cauchy-log-is-bounded})).
By lemma~\ref{exercise} (Part $2$),
$$
H_n^{\vec{\eps}, \vec{\theta}}(Z_0|Z_{-n}^{-1}) =
\int_{\mathcal{Z}^{n+1}} p^{\vec{\eps}, \vec{\theta}}(z_{-n}^0) \log
p^{\vec{\eps}, \vec{\theta}}(z_0|z_{-n}^{-1}) dz_{-n}^0,
$$
is analytic on $\mathbb{C}_{\vec{\eps}_0}^{m_1}(r_1) \times
\mathbb{C}_{\vec{\theta}_0}^{m_2}(r_2)$.

Now, to prove the theorem, we only need to prove that there exist $r_1, r_2 > 0$ such that $H^{\vec{\eps}, \vec{\theta}}_n(Z)$ uniformly converges on $\mathbb{C}_{\vec{\eps}_0}^{m_1}(r_1) \times \mathbb{C}_{\vec{\theta}_0}^{m_2}(r_2)$ as $n \to \infty$. First, we observe that
\begin{align*}
\hspace{-1cm} |H^{\vec{\eps}, \vec{\theta}}_{n+1}(Z)-H^{\vec{\eps}, \vec{\theta}}_n(Z)| &=\left| \int_{\mathcal{Z}^{n+2}} p^{\vec{\eps}, \vec{\theta}}(z_{-n-1}^{0}) \log p^{\vec{\eps}, \vec{\theta}}(z_0|z_{-n-1}^{-1}) dz_{-n-1}^0 - \int_{\mathcal{Z}^{n+1}} p^{\vec{\eps}, \vec{\theta}}(z_{-n}^{0}) \log p^{\vec{\eps}, \vec{\theta}}(z_0|z_{-n}^{-1}) dz_{-n}^0 \right| \\
&=\left| \int_{\mathcal{Z}^{n+2}} p^{\vec{\eps}, \vec{\theta}}(z_{-n-1}^{0}) \log \mathring{p}^{\vec{\eps}, \vec{\theta}}(z_0|z_{-n-1}^{-1}) dz_{-n-1}^0 - \int_{\mathcal{Z}^{n+1}} p^{\vec{\eps}, \vec{\theta}}(z_{-n}^{0}) \log \mathring{p}^{\vec{\eps}, \vec{\theta}}(z_0|z_{-n}^{-1}) dz_{-n}^0 \right| \\
&=\left| \int_{\mathcal{Z}^{n+2}} p^{\vec{\eps}, \vec{\theta}}(z_{-n-1}^{0}) (\log \mathring{p}^{\vec{\eps}, \vec{\theta}}(z_0|z_{-n-1}^{-1})-\log \mathring{p}^{\vec{\eps}, \vec{\theta}}(z_0|z_{-n}^{-1})) dz_{-n-1}^0 \right|.
\end{align*}
Fix $(\vec{\eps}, \vec{\theta}) \in \mathbb{C}_{\vec{\eps}_0}^{m_1}(r_1) \times \mathbb{C}_{\vec{\theta}_0}^{m_2}(r_2)$. Then, by (\ref{Cauchy-F-rho'}), (\ref{Cauchy-it-is-bounded}), (\ref{Cauchy-log-is-bounded}), we have, for some $0 < \rho_1 < 1$, $L_1', L_1 > 0$,
\begin{align*}
| p^{\vec{\eps}, \vec{\theta}}(z_{-n-1}^{0}) (\log \mathring{p}^{\vec{\eps}, \vec{\theta}}(z_0|z_{-n-1}^{-1})-\log \mathring{p}^{\vec{\eps}, \vec{\theta}}(z_0|z_{-n}^{-1})) | & \leq L'_1 \left| p^{\vec{\eps}, \vec{\theta}}(z_{-n-1}^{0}) (\mathring{p}^{\vec{\eps}, \vec{\theta}}(z_0|z_{-n-1}^{-1})-\mathring{p}^{\vec{\eps}, \vec{\theta}}(z_0|z_{-n}^{-1}))\right| \\
& \leq L'_1 |p^{\vec{\eps}, \vec{\theta}}(z_{-n-1}^{0})| L_1 \rho_1^n.
\end{align*}
Notice that for any given $\delta > 0$, there exist $r_1, r_2 > 0$ such that for all $\vec{\eps} \in \mathbb{C}_{\vec{\eps}_0}^{m_1}(r_1)$,
$$
|\pi_{y_{-n}}^{\vec{\eps}}| \leq (1+\delta)
\pi_{y_{-n}}^{\vec{\eps}_0}, \qquad |\pi_{y_i y_{i+1}}^{\vec{\eps}}|
\leq (1+\delta) \pi_{y_i y_{i+1}}^{\vec{\eps}_0},
$$
and for any $y \in \mathcal{Y}$ and all $\vec{\theta} \in \mathbb{C}_{\vec{\theta}_0}^{m_2}(r_2)$,
$$
\int_{\mathcal{Z}} |q^{\vec{\theta}}(z|y)| dz \leq (1+\delta) \int_{\mathcal{Z}} q^{\vec{\theta}_0}(z|y) dz=1+\delta,
$$
(here we have used the fact that $\int_{\mathcal{Z}}
|q^{\vec{\theta}}(z|y)| dz$ is a continuous function of
$\vec{\theta} \in \mathbb{C}_{\vec{\theta}_0}^{m_2}(r_2)$; this
follows from Lemma~\ref{exercise} (Part $1$)).
It then follows from (\ref{bounded-by-sup}) that
$$
\int_{\mathcal{Z}^{n+1} }|p^{\vec{\eps}, \vec{\theta}}(z_{-n-1}^{-1})| dz_{-n-1}^{-1} \leq (1+\delta)^{2(n+2)}.
$$

By choosing $\delta > 0$ sufficiently small, we can combine all the
relevant inequalities above to obtain some $L
> 0$ and some $0 < \rho < 1$ such that for all $(\vec{\eps},
\vec{\theta}) \in \mathbb{C}_{\vec{\eps}_0}^{m_1}(r_1) \times
\mathbb{C}_{\vec{\theta}_0}^{m_2}(r_2)$,
$$
|H^{\vec{\eps}, \vec{\theta}}_{n+1}(Z)-H^{\vec{\eps}, \vec{\theta}}_n(Z)|
\leq \int_{\mathcal{Z}^{n+2}} | p^{\vec{\eps}, \vec{\theta}}(z_{-n-1}^{0}) (\log \mathring{p}^{\vec{\eps}, \vec{\theta}}(z_0|z_{-n-1}^{-1})-\log \mathring{p}^{\vec{\eps}, \vec{\theta}}(z_0|z_{-n}^{-1})) | dz_{-n-1}^0
\leq L \rho^n,
$$
which implies the uniform convergence of $H^{\vec{\eps},
\vec{\theta}}_n(Z)$ on $\mathbb{C}_{\vec{\eps}_0}^{m_1}(r_1) \times
\mathbb{C}_{\vec{\theta}_0}^{m_2}(r_2)$ as $n$ tends to infinity,
and thus the analyticity of $H^{\vec{\eps}, \vec{\theta}}(Z)$ around
$(\vec{\eps}_0, \vec{\theta}_0)$.


\end{proof}

\section{Proof of Theorem~\ref{Gaussian}} \label{IV}

The following lemma is an analog of Lemma~\ref{Cauchy-Uniformity}.

\begin{lem} \label{Gaussian-Uniformity}
For any $\delta > 0$, there exist $r_1, r_2 > 0$ such that for any $(\vec{\eps}, \vec{\theta}) \in \mathbb{C}_{\vec{\eps}_0}^{m_1}(r_1) \times \mathbb{C}_{\vec{\theta}_0}^{m_2}(r_2)$, any $z \in \mathcal{Z}$ and any $x \in W$, we have
$$
|f^{\vec{\eps}, \vec{\theta}}_{z}(x)- f^{\vec{\eps}_0, \vec{\theta}_0}_{z}(x)| \leq \delta.
$$
\end{lem}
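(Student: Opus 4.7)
The plan is to split $\mathcal{Z}$ into the compact set $\Sigma$ provided by condition~(\ref{one-column-dominant}) and its complement, and to use entirely different mechanisms on the two regions. I would first rewrite, as in the proof of Lemma~\ref{Cauchy-Uniformity},
\begin{equation*}
f_z^{\vec{\eps}, \vec{\theta}}(x) = \frac{x \Pi^{\vec{\eps}, \vec{\theta}}(z)/q^{\vec{\theta}}(z|I)}{x (\Pi^{\vec{\eps}, \vec{\theta}}(z)/q^{\vec{\theta}}(z|I))\mathbf{1}},
\end{equation*}
so that only the ratios $g_z(\vec{\theta},j) = q^{\vec{\theta}}(z|j)/q^{\vec{\theta}}(z|I)$ (with $g_z(\vec{\theta},I) \equiv 1$), together with the transition probabilities $\pi_{ij}^{\vec{\eps}}$, appear.

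For $z \notin \Sigma$, condition~(\ref{one-column-dominant}) gives $|g_z(\vec{\theta},j)| \leq \eta$ for every $j \neq I$ and every $\vec{\theta} \in \mathbb{C}_{\vec{\theta}_0}^{m_2}(r_2)$, where $\eta$ will eventually be chosen small. Strict positivity of $\Pi$ at $\vec{\eps}_0$ yields, after shrinking $r_1$, a uniform lower bound $|\sum_i x_i \pi_{iI}^{\vec{\eps}}| \geq c > 0$ for all real $x \in W$, since this is a convex combination of the complex numbers $\pi_{iI}^{\vec{\eps}}$, each close to the positive real $\pi_{iI}^{\vec{\eps}_0}$. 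A direct numerator/denominator estimate then gives
\begin{equation*}
\left| f_z^{\vec{\eps}, \vec{\theta}}(x) - e_I \right| = O(\eta/c),
\end{equation*}
uniformly over all such $(\vec{\eps}, \vec{\theta}, z, x)$, where $e_I$ is the $I$-th standard basis vector (independent of $\vec{\eps}$ and $\vec{\theta}$). Writing this bound at both $(\vec{\eps}, \vec{\theta})$ and $(\vec{\eps}_0, \vec{\theta}_0)$ and applying the triangle inequality disposes of $z \notin \Sigma$ once $\eta$ is small enough.

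For $z \in \Sigma$ the argument is routine. Positivity in condition~(b) together with joint continuity in condition~(c), combined with compactness of $\Sigma$ and strict positivity of $\pi^{\vec{\eps}_0}$, furnish a uniform positive lower bound on $|x \Pi^{\vec{\eps}, \vec{\theta}}(z) \mathbf{1}|$ throughout a sufficiently small complex neighborhood of $(\vec{\eps}_0, \vec{\theta}_0)$. Hence $f_z^{\vec{\eps}, \vec{\theta}}(x)$ is jointly continuous on the compact product set $\Sigma \times W \times \overline{\mathbb{C}_{\vec{\eps}_0}^{m_1}(r_1/2)} \times \overline{\mathbb{C}_{\vec{\theta}_0}^{m_2}(r_2/2)}$, and uniform continuity there, after a further shrinking of $r_1$ and $r_2$, forces $|f_z^{\vec{\eps}, \vec{\theta}}(x) - f_z^{\vec{\eps}_0, \vec{\theta}_0}(x)| \leq \delta$ for every $z \in \Sigma$ and every $x \in W$. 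Combining the two regions completes the proof.

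The main obstacle, compared to Lemma~\ref{Cauchy-Uniformity}, is the region $z \notin \Sigma$: uniformity in $z$ can no longer be read off from a global comparability hypothesis like~(\ref{all-columns-comparable}), and must instead be extracted from~(\ref{one-column-dominant}), which makes both $f_z^{\vec{\eps}, \vec{\theta}}(x)$ and $f_z^{\vec{\eps}_0, \vec{\theta}_0}(x)$ collapse onto the same parameter-independent vector $e_I$.
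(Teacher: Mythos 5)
Your proof is correct, and it shares the paper's two structural ingredients --- normalizing by $q^{\vec\theta}(z|I)$ and splitting $\mathcal{Z}$ into the compact $\Sigma$ supplied by (\ref{one-column-dominant}) and its complement --- but it finishes differently. The paper uses the dichotomy only to establish that the normalized denominator $x\bigl(\Pi^{\vec\eps,\vec\theta}(z)/q^{\vec\theta}(z|I)\bigr)\mathbf{1}$ is bounded away from zero \emph{uniformly over all $z$}, and then closes by citing the equicontinuity hypothesis, Condition (d)(i), to control the change in the ratio entries $q^{\vec\theta}(z|j)/q^{\vec\theta}(z|I)$ as $\vec\theta$ is perturbed. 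You dispense with Condition (d)(i) entirely: outside $\Sigma$, the smallness in (\ref{one-column-dominant}), combined with the lower bound on $\sum_i x_i\pi_{iI}^{\vec\eps}$ coming from Condition (a), makes both $f_z^{\vec\eps,\vec\theta}(x)$ and $f_z^{\vec\eps_0,\vec\theta_0}(x)$ collapse onto the parameter-independent vector $e_I$, while inside $\Sigma$ you invoke the joint continuity of $q$ from Condition (c), positivity from Condition (b), and compactness to get uniform continuity of $f_z^{\vec\eps,\vec\theta}(x)$ in $(\vec\eps,\vec\theta)$. Your route is thus more self-contained, relying only on Conditions (a), (b), (c) and (\ref{one-column-dominant}); the paper's is shorter because it delegates the uniformity-in-$z$ step to hypothesis (d)(i), which it needs anyway elsewhere (e.g.\ in Lemma~\ref{M-two-part-contraction}).
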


\begin{proof}
Note that for any $x \in W$,
\begin{equation} \label{Gaussian-divided-by-I}
f_z^{\vec{\eps}, \vec{\theta}}(x) = \frac{x \Pi^{\vec{\eps},
\vec{\theta}}(z)}{x \Pi^{\vec{\eps}, \vec{\theta}}(z)
\mathbf{1}}=\frac{x (\Pi^{\vec{\eps}, \vec{\theta}}(z)/q^{\vec{\theta}}(z|I))}{x
(\Pi^{\vec{\eps}, \vec{\theta}}(z)/q^{\vec{\theta}}(z|I)) \mathbf{1}}.
\end{equation}
It then follows from (\ref{one-column-dominant})
that for sufficiently small $r_1, r_2 > 0$, there exists a compact
subset $\Sigma \subset \mathcal{Z}$ such that for any $(\vec{\eps},
\vec{\theta}) \in \mathbb{C}_{\vec{\eps}_0}^{m_1}(r_1) \times
\mathbb{C}_{\vec{\theta}_0}^{m_2}(r_2)$, any $z \not \in \Sigma$ and
any $x \in W$, $x (\Pi^{\vec{\eps}, \vec{\theta}}(z)/q^{
\vec{\theta}}(z|I)) \mathbf{1}$ is bounded away from $0$. On the
other hand, by the compactness of $\Sigma$, we deduce that there
exist $r_1, r_2 > 0$ such that for any $(\vec{\eps}, \vec{\theta})
\in \mathbb{C}_{\vec{\eps}_0}^{m_1}(r_1) \times
\mathbb{C}_{\vec{\theta}_0}^{m_2}(r_2)$, any $z \in \mathcal{Z}$ and
any $x \in W$, $x (\Pi^{\vec{\eps},
\vec{\theta}}(z)/q^{\vec{\theta}}(z|I)) \mathbf{1}$ is bounded away
from $0$. The lemma then follows from Condition (d(i)).
\end{proof}

\begin{lem} \label{M-two-part-contraction}
\begin{enumerate}
\item Given any $(\vec{\eps}_0, \vec{\theta}_0) \in \Omega_1 \times \Omega_2$, there exist $r_1, r_2, \delta > 0$, $0 < \rho_1 < 1$ and a compact subset $\Sigma \subset \mathcal{Z}$ such that for any $z \not \in \Sigma$ and all $(\vec{\eps}, \vec{\theta}) \in \mathbb{C}_{\vec{\eps}_0}^{m_1}(r_1) \times \mathbb{C}_{\vec{\theta}_0}^{m_2}(r_2)$, $f_{z}^{\vec{\eps}, \vec{\theta}}$ is a $\rho_1$-contraction mapping on $\tilde{W}_W(\delta)$ under the Euclidean metric.

\item Given any $(\vec{\eps}_0, \vec{\theta}_0) \in \Omega_1 \times \Omega_2$, for any
compact subset $\Sigma \subset \mathcal{Z}$, there exist $r_1, r_2,
\delta > 0$, $0 < \rho_1 < 1$ and a positive integer $n_0$ such
that, for all $z_i^j$ with $z_i \in \Sigma$ and $j \geq i+n_0$ and
all $(\vec{\eps}, \vec{\theta}) \in
\mathbb{C}_{\vec{\eps}_0}^{m_1}(r_1) \times
\mathbb{C}_{\vec{\theta}_0}^{m_2}(r_2)$, $f_{z_i^j}^{\vec{\eps},
\vec{\theta}}$ is a $\rho_1$-contraction mapping on
$\tilde{W}_W(\delta)$ under the Euclidean metric.
\end{enumerate}
\end{lem}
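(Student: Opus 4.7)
My plan starts from the observation that $\Pi^{\vec\eps_0, \vec\theta_0}(z) = \Pi^{\vec\eps_0}\cdot\mathrm{diag}(q^{\vec\theta_0}(z|1), \ldots, q^{\vec\theta_0}(z|l))$, which shows $\phi(\Pi^{\vec\eps_0, \vec\theta_0}(z)) = \phi(\Pi^{\vec\eps_0})$ is positive and independent of $z$; hence for every $z \in \mathcal{Z}$ the real map $f_z^{\vec\eps_0, \vec\theta_0}$ is a Hilbert contraction on $W^\circ$ with a common factor $\tau := \tau(\Pi^{\vec\eps_0}) < 1$. Both parts of the lemma will first be established at $(\vec\eps_0, \vec\theta_0)$ and then pushed to a complex neighborhood using Lemma~\ref{Gaussian-Uniformity} together with Condition~(d(i)) and a Cauchy-estimates argument in the input variable.

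For Part 1, I would use (\ref{one-column-dominant}) to choose $\Sigma$ so large that $|q^{\vec\theta}(z|j)/q^{\vec\theta}(z|I)| \leq \eps$ for all $z \notin \Sigma$, all $j \neq I$, and all $\vec\theta \in \mathbb{C}_{\vec\theta_0}^{m_2}(r_2)$. Setting $A^{\vec\eps,\vec\theta}(z) := \Pi^{\vec\eps, \vec\theta}(z)/q^{\vec\theta}(z|I)$, its $I$-th column is the $\vec\eps$-analytic vector $(\pi_{1I}^{\vec\eps}, \ldots, \pi_{lI}^{\vec\eps})^{T}$ (bounded uniformly away from $0$ and $\infty$), while every other column has modulus $O(\eps)$; the denominator $w A^{\vec\eps, \vec\theta}(z)\mathbf{1}$ is then bounded below uniformly on $\tilde{W}_W(\delta)$ for small $\delta$. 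Since $f_z^{\vec\eps, \vec\theta}(w) = wA/(wA\mathbf{1})$, the two $O(1)$ leading contributions to $D_w f_z^{\vec\eps, \vec\theta}$ (both coming from the $I$-th column of $A$) cancel identically, leaving $\|D_w f_z^{\vec\eps, \vec\theta}\| = O(\eps)$ on $\tilde{W}_W(\delta)$; choosing $\eps$ small enough (equivalently $\Sigma$ large enough) produces $\rho_1 < 1$.

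For Part 2 at real $(\vec\eps_0, \vec\theta_0)$, compactness of $\Sigma$ and positivity of $q^{\vec\theta_0}$ yield $q^{\vec\theta_0}(z_i|j) \geq c > 0$ for all $z_i \in \Sigma$ and all $j$, so $f_{z_i}^{\vec\eps_0,\vec\theta_0}(W)$ lies in a fixed compact $D_0 \subset W^\circ$ and $f_{z_i}^{\vec\eps_0,\vec\theta_0}$ is Euclidean Lipschitz on $W$ with some constant $L$ uniform over $z_i \in \Sigma$. Combining the per-step Hilbert contraction factor $\tau$ with the global inequality $|x-y| \leq K\, d_H(x,y)$ on $W^\circ$ and the equivalence of the two metrics on $D_0$, I obtain, for $u, v \in W$, the chain
\[
|f_{z_i^j}^{\vec\eps_0,\vec\theta_0}(u)-f_{z_i^j}^{\vec\eps_0,\vec\theta_0}(v)| \leq K\,d_H\bigl(f_{z_i^j}^{\vec\eps_0,\vec\theta_0}(u),f_{z_i^j}^{\vec\eps_0,\vec\theta_0}(v)\bigr) \leq K\tau^{j-i}\,d_H\bigl(f_{z_i}^{\vec\eps_0,\vec\theta_0}(u),f_{z_i}^{\vec\eps_0,\vec\theta_0}(v)\bigr) \leq KK'L\,\tau^{j-i}|u-v|,
\]
which is $\leq \rho_0\,|u-v|$ with $\rho_0 < 1$ once $n_0$ is large enough.

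The main obstacle is the complex extension in Part 2: uniformly over the infinite family of sequences $z_i, \ldots, z_j$ (with only $z_i$ constrained to $\Sigma$), the Lipschitz bound $\rho_0 < 1$ must survive perturbation to complex input $w \in \tilde{W}_W(\delta)$ and complex $(\vec\eps, \vec\theta)$. For this I would apply Lemma~\ref{Gaussian-Uniformity} at each link of the composition to get that $f_{z_i^j}^{\vec\eps, \vec\theta}$ is uniformly sup-norm close to $f_{z_i^j}^{\vec\eps_0, \vec\theta_0}$ on $\tilde{W}_W(\delta)$, then invoke Cauchy's integral formula in the input variable on a slightly smaller ball to upgrade this sup-norm closeness to derivative closeness, yielding $\|D_w f_{z_i^j}^{\vec\eps, \vec\theta}\| \leq \rho_1 < 1$ throughout $\tilde{W}_W(\delta)$ and $\mathbb{C}_{\vec\eps_0}^{m_1}(r_1)\times\mathbb{C}_{\vec\theta_0}^{m_2}(r_2)$ for sufficiently small $r_1, r_2, \delta$.
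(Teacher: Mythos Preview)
Your proposal is correct and follows essentially the same route as the paper: in Part~1 both you and the paper normalize by $q^{\vec\theta}(z|I)$ and argue that $D_w f_z^{\vec\eps,\vec\theta}$ is a rational expression in $\pi_{ij}^{\vec\eps}$ and the ratios $q^{\vec\theta}(z|j)/q^{\vec\theta}(z|I)$, hence $O(\eps)$ outside $\Sigma$; in Part~2 your displayed chain $|{\cdot}|\le K\,d_H\le K\tau^{j-i}d_H\le KK'L\,\tau^{j-i}|{\cdot}|$ is exactly the paper's telescoping product of the four ratios $d_E/d_H$, $\prod d_H/d_H$, $d_H/d_E$, $d_E/d_E$, using the same four ingredients (Hilbert dominates Euclidean, per-step Hilbert contraction by $\tau(\Pi^{\vec\eps_0})$, metric equivalence on the compact $f_{z_i}(W)$, and uniform Lipschitz of $f_{z_i}$ for $z_i\in\Sigma$). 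Your Cauchy-estimates mechanism for pushing the bound to complex $(\vec\eps,\vec\theta,w)$ is simply a more explicit rendering of what the paper compresses into the single clause ``which, together with Condition~(d(i)), implies Part~2.''
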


\begin{proof}
\textbf{1.} For any $x \in W$,
$$
f_z^{\vec{\eps}_0, \vec{\theta}_0}(x) = \frac{x \Pi^{\vec{\eps}_0,
\vec{\theta}_0}(z)}{x \Pi^{\vec{\eps}_0, \vec{\theta}_0}(z)
\mathbf{1}}=\frac{x (\Pi^{\vec{\eps}_0,
\vec{\theta}_0}(z)/q^{\vec{\theta}_0}(z|I))}{x (\Pi^{\vec{\eps}_0,
\vec{\theta}_0}(z)/q^{\vec{\theta}_0}(z|I)) \mathbf{1}}.
$$
Observe, by the quotient rule, that $D_x f_z^{\vec{\eps}_0,
\vec{\theta}_0}(x)$ is a rational function of entries of $x$ with
coefficients that are products of quantities of the form
$\pi^{\vec{\eps}}_{ij}$ and $q^{\vec{\theta}}(z|j)/q^{\vec{\theta}}(z|I)$.  By
(\ref{one-column-dominant}) (with $\vec{\theta}$ set to be
$\vec{\theta}_0$), we obtain the result at $\vec{\eps} =
\vec{\eps}_0$ and $\vec{\theta} = \vec{\theta}_0$. For general
$(\vec{\eps}, \vec{\theta}) \in \mathbb{C}_{\vec{\eps}_0}^{m_1}(r_1)
\times \mathbb{C}_{\vec{\theta}_0}^{m_2}(r_2)$, the result holds by
Condition (d(i)).

\textbf{2.} For any two points $x, y \in W$, we have, at $(\vec{\eps}_0, \vec{\theta}_0)$,
$$
\hspace{-1cm} \frac{d_E(f_{z_i^j}(x), f_{z_i^j}(y))}{d_E(x, y)} = \frac{d_E(f_{z_i^j}(x), f_{z_i^j}(y))}{d_H(f_{z_i^j}(x), f_{z_i^j}(y))} \left(\prod_{k=i}^{j-1} \frac{d_H(f_{z_i^{k+1}}(x), f_{z_i^{k+1}}(y))}{d_H(f_{z_i^{k}}(x), f_{z_i^{k}}(y))}\right) \frac{d_H(f_{z_i}(x), f_{z_i}(y))}{d_E(f_{z_i}(x), f_{z_i}(y))} \frac{d_E(f_{z_i}(x), f_{z_i}(y))}{d_E(x, y)},
$$
where $d_E$ denotes the Euclidean metric. First, since the Hilbert
metric dominates the Euclidean metric up to a multiplicative factor,
there exists $C_1 > 0$ independent of all $z_i^j$ such that for all
$k=i+1, \ldots, j,$
$$
\frac{d_E(f_{z_i^j}(x), f_{z_i^j}(y))}{d_H(f_{z_i^j}(x), f_{z_i^j}(y))} \leq C_1.
$$
By the fact that $f_z$ is a contraction mapping on $W^\circ$ under
the Hilbert metric, there exists $0 < \rho_0 < 1$ independent of all
$z_i^j$ such that
$$
\frac{d_H(f_{z_i^k}(x), f_{z_i^k}(y))}{d_H(f_{z_i^{k-1}}(x), f_{z_i^{k-1}}(y))} \leq \rho_0.
$$
On the other hand, since the Hilbert and Euclidean metrics are
equivalent on compact subsets of $W^\circ$, and  $f_{z_i}(W)$ is
compact, there exists $C_2
> 0$, which only depends on $\Sigma$, such that
$$
\frac{d_H(f_{z_i}(x), f_{z_i}(y))}{d_E(f_{z_i}(x), f_{z_i}(y))} \leq C_2.
$$
Finally, it can be easily checked that there exists $C_3 > 0$, which only depends on $\Sigma$, such that
$$
\frac{d_E(f_{z_i}(x), f_{z_i}(y))}{d_E(x, y)} \leq C_3.
$$
Combining all the above inequalities, we deduce that there exists $C_4 > 0$ such that
$$
\frac{d_E(f_{z_i^j}(x), f_{z_i^j}(y))}{d_E(x, y)} \leq C_4 \rho_0^{j-i},
$$
which, together with Condition (d(i)),
implies Part $2$.
\end{proof}

\begin{lem}  \label{Gaussian-a-b}
\begin{enumerate}
\item For any $\delta > 0$, there exists $r_1, r_2 > 0$ such that for any $(\vec{\eps}, \vec{\theta}) \in \mathbb{C}_{\vec{\eps}_0}^{m_1}(r_1) \times \mathbb{C}_{\vec{\theta}_0}^{m_2}(r_2)$ and for all $z_{-n}^0 \in \mathcal{Z}^{n+1}$ and $-n-1 \le i \le -1$,
\begin{equation} \label{Gaussian-confined-orbit}
x_i^{\vec{\eps}, \vec{\theta}}(z_{-n}^i) \in \tilde{W}_W(\delta).
\end{equation}

\item There exist $r_1, r_2 > 0$ such that for all $z_{-n}^0 \in \mathcal{Z}^{n+1}$, $p^{\vec{\eps}, \vec{\theta}}(z_0|z_{-n}^{-1})$ is analytic on $\mathbb{C}_{\vec{\eps}_0}^{m_1}(r_1) \times \mathbb{C}_{\vec{\theta}_0}^{m_2}(r_2)$.

\item There exist $r_1, r_2 > 0$, $0 < \rho_1 < 1$ and $L_1 > 0$ such that for any two $\mathcal{Z}$-valued sequences $\{a_{-n_1}^0\}$ and $\{b_{-n_2}^0\}$ with $a_{-n}^0=b_{-n}^0$ and for all $(\vec{\eps}, \vec{\theta}) \in \mathbb{C}_{\vec{\eps}_0}^{m_1}(r_1) \times \mathbb{C}_{\vec{\theta}_0}^{m_2}(r_2)$, we have
\begin{equation}
|\mathring{p}^{\vec{\eps}, \vec{\theta}}(a_0|a_{-n_1}^{-1})-\mathring{p}^{\vec{\eps}, \vec{\theta}}(b_0|b_{-n_2}^{-1})| \leq L_1 \rho_1^n.
\end{equation}
\end{enumerate}
\end{lem}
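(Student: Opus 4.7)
My plan follows the structure of Lemma~\ref{Cauchy-a-b}, with the essential modifications that the compact subset $D$ of $W^\circ$ is replaced by $W$ itself and the uniform reblocking into fixed-length blocks is replaced by a variable-length reblocking dictated by the dichotomy in Lemma~\ref{M-two-part-contraction}. Specifically, I first fix a compact $\Sigma \subset \mathcal{Z}$ supplied by Part~1 of Lemma~\ref{M-two-part-contraction}, then feed this $\Sigma$ into Part~2 to obtain an integer $n_0$; the reblocking of $z_{-n}^0$ then proceeds left to right: if the next unblocked symbol $z_j$ satisfies $z_j \notin \Sigma$, set $\hat{z}_i = z_j$ (a singleton); if $z_j \in \Sigma$, set $\hat{z}_i = z_j^{j+n_0}$ (length $n_0+1$). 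After choosing $r_1, r_2, \delta > 0$ small enough that both parts of Lemma~\ref{M-two-part-contraction} apply simultaneously, every $f_{\hat{z}_i}^{\vec{\eps}, \vec{\theta}}$ arising in this decomposition is a $\rho_1$-contraction on $\tilde{W}_W(\delta)$ in the Euclidean metric.

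For Part~1, I would prove by induction that $\hat{x}_i^{\vec{\eps}, \vec{\theta}} \in \tilde{W}_W(\delta)$ at block boundaries, mirroring the corresponding step in Lemma~\ref{Cauchy-a-b}. The base case follows from analyticity of $\pi^{\vec{\eps}}$ at $\vec{\eps}_0$, and the inductive step combines Lemma~\ref{Gaussian-Uniformity} (which gives $|f^{\vec{\eps}, \vec{\theta}}_{\hat{z}}(x) - f^{\vec{\eps}_0, \vec{\theta}_0}_{\hat{z}}(x)| \leq \delta(1-\rho_1)$ after further shrinking $r_1, r_2$) with the $\rho_1$-contraction above to obtain
$$
|\hat{x}_i^{\vec{\eps},\vec{\theta}} - \hat{x}_i^{\vec{\eps}_0,\vec{\theta}_0}| \leq \rho_1 |\hat{x}_{i-1}^{\vec{\eps},\vec{\theta}} - \hat{x}_{i-1}^{\vec{\eps}_0,\vec{\theta}_0}| + \delta(1-\rho_1),
$$
which inductively stays bounded by $\delta$. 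Since block lengths are uniformly bounded by $n_0+1$, joint continuity of $f_z^{\vec{\eps}, \vec{\theta}}$ in $(\vec{\eps}, \vec{\theta})$ (via Condition~(d(i))) lets me shrink $r_1, r_2$ further so that the unblocked iterates $x_i^{\vec{\eps}, \vec{\theta}}$ also remain in $\tilde{W}_W(\delta)$ (after mildly enlarging $\delta$ if needed).

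Parts~2 and~3 are then largely parallel to their Cauchy counterparts. For Part~2, the representation (\ref{Gaussian-divided-by-I}) together with Part~1 shows that the denominator $x(\Pi^{\vec{\eps}, \vec{\theta}}(z)/q^{\vec{\theta}}(z|I))\mathbf{1}$ is uniformly bounded away from zero on $\tilde{W}_W(\delta)$, so each $f_z^{\vec{\eps}, \vec{\theta}}$ is jointly analytic in $(\vec{\eps}, \vec{\theta}, x)$; iterating (\ref{iter0}) and then invoking (\ref{amalg}) yields the desired analyticity of $p^{\vec{\eps}, \vec{\theta}}(z_0|z_{-n}^{-1})$. For Part~3, I would reblock both sequences identically on their common tail $a_{-n}^0 = b_{-n}^0$, bound $|\hat{x}_{-\hat{n}, \hat{a}}^{\vec{\eps}, \vec{\theta}} - \hat{x}_{-\hat{n}, \hat{b}}^{\vec{\eps}, \vec{\theta}}|$ by a crude universal constant, and then use the $\rho_1$-contraction of each $f_{\hat{z}}^{\vec{\eps}, \vec{\theta}}$ to get geometric decay of $|\hat{x}_{-1, \hat{a}}^{\vec{\eps}, \vec{\theta}} - \hat{x}_{-1, \hat{b}}^{\vec{\eps}, \vec{\theta}}|$; dividing by $q^{\vec{\theta}}(z_0|I)$ as in the definition of $\mathring{p}$ then transfers the decay to the desired estimate.

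The main obstacle I anticipate is in Part~1: the confinement (\ref{Gaussian-confined-orbit}) is required for \emph{every} index $i$, not only at block boundaries, so I must verify that intermediate iterates inside a long block do not escape $\tilde{W}_W(\delta)$. Since block lengths are bounded by $n_0+1$, this is absorbed by a further shrinking of $r_1, r_2$, but the bookkeeping to match the dichotomy --- ensuring every reblocked symbol is either a singleton with $z_j \notin \Sigma$ or a block of length $n_0+1$ starting with $z_j \in \Sigma$, and that the tolerances from Parts~1 and~2 of Lemma~\ref{M-two-part-contraction} are chosen compatibly --- is the most delicate step.
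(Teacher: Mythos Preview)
Your proposal is correct and follows essentially the same route as the paper's proof: the same dichotomy reblocking (singletons for $z_j \notin \Sigma$, long blocks beginning in $\Sigma$) driven by Lemma~\ref{M-two-part-contraction}, the same inductive confinement argument via Lemma~\ref{Gaussian-Uniformity}, and the same reductions for Parts~2 and~3. The paper is in fact terser than you are---it dispatches the intermediate-iterate issue you flag as an ``obstacle'' with the single parenthetical ``(with perhaps smaller $r_1, r_2$)'' and states Type~II blocks have length $\ge n_0+1$ without fixing an upper bound---so your more explicit bookkeeping (fixed block length $n_0+1$, hence uniformly bounded blocks) is a legitimate and arguably cleaner instantiation of the same argument.
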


\begin{proof}
1. For a given compact subset $\Sigma \subset \mathcal{Z}$, consecutively reblock $z_{-n}^{-1}$ to a $\hat{z}_{-\hat{n}}^{-1}$ such that each $\hat{z}_i=z_{j(i)}^{k(i)}$ is of
\begin{itemize}
\item Type I: $j(i)=k(i)$ and $z_{j(i)} \not \in \Sigma$; or
\item Type II: $z_{j(i)} \in \Sigma$ and $k(i)-j(i) \geq n_0$, where $n_0$ is as in Part $2$ of Lemma~\ref{M-two-part-contraction}.
\end{itemize}
By Lemma~\ref{M-two-part-contraction}, we can choose $r_1, r_2, \delta > 0$ sufficiently small such that
there exists $0 < \rho_1 < 1$ such that for all  $(\vec{\eps}, \vec{\theta}) \in \mathbb{C}_{\vec{\eps}_0}^{m_1}(r_1) \times \mathbb{C}_{\vec{\theta}_0}^{m_2}(r_2)$,  $f_{\hat{z}}^{\vec{\eps}, \vec{\theta}}$ is a $\rho_1$-contraction on $\tilde{W}_W(\delta)$ under the Euclidean metric.

To prove (\ref{Gaussian-confined-orbit}), it is enough to prove the
version of (\ref{Gaussian-confined-orbit}) with $x_i^{\vec{\eps},
\vec{\theta}}(z_{-n}^i)$ replaced by $\hat{x}_i^{\vec{\eps},
\vec{\theta}}(\hat{z}_{-n}^i)$ (with perhaps smaller $r_1, r_2$).

Now, choose $r_1, r_2 > 0$ so small (the existence of $r_1, r_2$ is guaranteed by Lemma~\ref{Gaussian-Uniformity}) such that for any $\hat{z}$, for all $x \in W$, all $(\vec{\eps}, \vec{\theta}) \in \mathbb{C}_{\vec{\eps}_0}^{m_1}(r_1) \times \mathbb{C}_{\vec{\theta}_0}^{m_2}(r_2)$,
\begin{equation}
\label{Gaussian-bounded-1}
|f^{\vec{\eps}, \vec{\theta}}_{\hat{z}}(x)-f^{\vec{\eps}_0, \vec{\theta}_0}_{\hat{z}}(x)| \leq \delta (1-\rho_1),
\end{equation}
and for all $\vec{\eps} \in \mathbb{C}_{\vec{\eps}_0}^{m_1}(r_1)$,
\begin{equation}
\label{Gaussian-bounded-2} |\pi^{\vec{\eps}}-\pi^{\vec{\eps}_0}| \leq
\delta (1-\rho_1).
\end{equation}
We then deduce that
\begin{align}
\nonumber |\hat{x}^{\vec{\eps}, \vec{\theta}}_{i+1}-\hat{x}^{\vec{\eps}_0, \vec{\theta}_0}_{i+1}| &=|f^{\vec{\eps}, \vec{\theta}}_{\hat{z}_{i+1}}(\hat{x}^{\vec{\eps}, \vec{\theta}}_i)-f^{\vec{\eps}_0, \vec{\theta}_0}_{\hat{z}_{i+1}}(\hat{x}^{\vec{\eps}_0, \vec{\theta}_0}_i)| \\
\label{Gaussian-star} & \leq |f^{\vec{\eps}, \vec{\theta}}_{\hat{z}_{i+1}}(\hat{x}^{\vec{\eps}, \vec{\theta}}_i)-f^{\vec{\eps}, \vec{\theta}}_{\hat{z}_{i+1}}(\hat{x}^{\vec{\eps}_0, \vec{\theta}_0}_i)|+|f^{\vec{\eps}, \vec{\theta}}_{\hat{z}_{i+1}}(\hat{x}^{\vec{\eps}_0, \vec{\theta}_0}_i)-f^{\vec{\eps}_0, \vec{\theta}_0}_{\hat{z}_{i+1}}(\hat{x}^{\vec{\eps}_0, \vec{\theta}_0}_i)|.
\end{align}
Then by (\ref{Gaussian-bounded-1}) and
(\ref{Gaussian-bounded-2}), and (\ref{Gaussian-star}), for $i > -\hat{n}-1$, we have
$$
|\hat{x}^{\vec{\eps}, \vec{\theta}}_{i+1}-\hat{x}^{\vec{\eps}_0, \vec{\theta}_0}_{i+1}| \leq \rho_1 |\hat{x}^{\vec{\eps}, \vec{\theta}}_i-\hat{x}^{\vec{\eps}_0, \vec{\theta}_0}_i|+ \delta (1-\rho_1).
$$
So, for all $i$,
$$
|\hat{x}^{\vec{\eps}, \vec{\theta}}_{i+1}-\hat{x}^{\vec{\eps}_0, \vec{\theta}_0}_{i+1}| \leq \delta,
$$
and thus for all $i$, we have $\hat{x}^{\vec{\eps}, \vec{\theta}}_{i+1} \in \tilde{W}_W(\delta)$, as desired.

2. It follows from the same dichotomy argument in the proof of Lemma~\ref{Gaussian-Uniformity} that for sufficiently small $r_1, r_2, \delta > 0$ and any $z \in \mathcal{Z}$, $f_z^{\vec{\eps}, \vec{\theta}}(x)$ is analytic with respect to $(\vec{\eps}, \vec{\theta}, x) \in \mathbb{C}_{\vec{\eps}_0}^{m_1}(r_1) \times \mathbb{C}_{\vec{\theta}_0}^{m_2}(r_2) \times \tilde{W}_W(\delta)$. It follows from this fact, the iterative nature of $x^{\vec{\eps}, \vec{\theta}}_i$ (see (\ref{iter0})) and Part $1$ that
for sufficiently small $r_1, r_2 > 0$, each $x^{\vec{\eps}, \vec{\theta}}_i$ is analytic
on $\mathbb{C}_{\vec{\eps}_0}^{m_1}(r_1) \times \mathbb{C}_{\vec{\theta}_0}^{m_2}(r_2)$. Part $2$ then immediately follows from (\ref{amalg}).

3. It follows from (\ref{Gaussian-confined-orbit}) and a similar argument as in the proof of Part $2$ of Lemma~\ref{Cauchy-a-b}.
\end{proof}

We are now ready for the proof of Theorem~\ref{Gaussian}.

\begin{proof}[Proof of Theorem~\ref{Gaussian}]

By (\ref{Gaussian-confined-orbit}), for sufficiently small $r_1, r_2
> 0$, there exist $C_1, C_2 > 0$ such that for any $z_{-n}^{0}$ and
any $(\vec{\eps}, \vec{\theta}) \in
\mathbb{C}_{\vec{\eps}_0}^{m_1}(r_1) \times
\mathbb{C}_{\vec{\theta}_0}^{m_2}(r_2)$,
\begin{equation} \label{Gaussian-it-is-bounded}
C_1 \leq |\mathring{p}^{\vec{\eps}, \vec{\theta}}(z_0|z_{-n}^{-1})| \leq C_2,
\end{equation}
which implies that for some $C_3 > 0$,
\begin{equation} \label{Gaussian-log-is-bounded}
|\log \mathring{p}^{\vec{\eps}, \vec{\theta}}(z_0|z_{-n}^{-1})| \leq C_3.
\end{equation}
Then, using the same argument as in the proof of Theorem~\ref{Cauchy}, we can show that there exist $r_1, r_2 > 0$ such that for any $n$, $H^{\vec{\eps}, \vec{\theta}}_n(Z)$ is analytic on $\mathbb{C}_{\vec{\eps}_0}^{m_1}(r_1) \times \mathbb{C}_{\vec{\eps}_0}^{m_2}(r_2)$. So, to prove the theorem, we only need to prove that there exist $r_1, r_2 > 0$ such that the $H^{\vec{\eps}, \vec{\theta}}_n(Z)$ uniformly converges on $\mathbb{C}_{\vec{\eps}_0}^{m_1}(r_1) \times \mathbb{C}_{\vec{\theta}_0}^{m_2}(r_2)$ as $n \to \infty$. This follows from (\ref{Gaussian-it-is-bounded}), (\ref{Gaussian-log-is-bounded}) and Lemma~\ref{Gaussian-a-b}, and a completely parallel argument as in the proof of Theorem~\ref{Cauchy}.

\end{proof}

With the following example, we show that for Gaussian channels,
$H(Z)$ need not be analytic even as a function of the channel
parameters alone, when the largest $\sigma_i$ is not unique.
\begin{exmp}
\label{example} Consider an additive Gaussian channel parameterized
as in (\ref{q-z-y-Gaussian}) with the binary input alphabet
$\mathcal{Y}=\{1, 2\}$. Assume that the input $Y$ is an i.i.d. process with
$$
P(Y_1=1)=P(Y_1=2)=1/2;
$$
and assume that
$$
q(z|1)=\frac{1}{\sqrt{2\pi} \sigma_1}e^{-(z+1)^2/\sigma_1^2}, ~~~ q(z|2)=\frac{1}{\sqrt{2\pi} \sigma_2}e^{-(z-1)^2/\sigma_2^2}.
$$
We then have
$$
p(z)=P(Y=1) q(z|1)+P(Y=2)q(z|2)=\frac{1}{2} \frac{1}{\sqrt{2\pi}
\sigma_1}e^{-(z+1)^2/\sigma_1^2}+\frac{1}{2} \frac{1}{\sqrt{2\pi}
\sigma_2}e^{-(z-1)^2/\sigma_2^2}.
$$
We claim that for any fixed $\sigma > 0$, analyticity of $H(Z)$ as a
function of $(\sigma_1, \sigma_2)$ fails at $(\sigma_1,
\sigma_2)=(\sigma, \sigma)$. To see this, we fix $\sigma_1 =
\sigma$, and we show that $H(Z)$ is not analytic
with respect to $\sigma_2$ at $\sigma_2=\sigma$. Note that for any real
$\sigma_2$,
\begin{align}
\hspace{-1cm} H(Z)&=-\int_{-\infty}^{\infty} p(z) \log p(z) dz \notag\\
    &= -\int_{0}^{\infty} p(z) \left(\log \frac{1}{2} \frac{1}{\sqrt{2\pi} \sigma_2}
    e^{-(z-1)^2/\sigma_2^2} \right) dz - \int_{-\infty}^{0} p(z) \left(\log \frac{1}{2} \frac{1}{\sqrt{2\pi} \sigma}
    e^{-(z-1)^2/\sigma^2} \right) dz \label{log-Phi-1}\\
    &\phantom{==} -\int_{0}^{\infty} p(z) \log \left(1 + \Phi_z(\sigma_2) \right) dz - \int_{-\infty}^{0} p(z) \log \left(1 + \Phi_z^{-1}(\sigma_2) \right) dz, \label{log-Phi-2}
\end{align}
where
\begin{equation}
\label{Phiz} \Phi_z(\sigma_2) \triangleq \frac{\sigma_2}{\sigma}
e^{(z-1)^2/\sigma_2^2-(z+1)^2/\sigma^2} \mbox{ and } \Phi_z^{-1}(\sigma_2) \triangleq \frac{1}{\Phi_z(\sigma_2)}.
\end{equation}
We note that (\ref{log-Phi-1}) is analytic as a function of $\sigma_2$ at $\sigma_2=\sigma$. To see this, observe that the first term of (\ref{log-Phi-1}) can be further computed as
$$
\hspace{-1cm} -\log (2 \sqrt{2\pi} \sigma_2) \int_{0}^{\infty} p(z) dz-\int_{0}^{\infty} \frac{1}{2 \sqrt{2 \pi} \sigma \sigma_2^2} (z-1)^2 e^{-(z+1)^2/\sigma^2} dz-\int_{0}^{\infty} \frac{1}{2 \sqrt{2 \pi} \sigma_2^3} (z-1)^2 e^{-(z-1)^2/\sigma_2^2} dz,
$$
which is analytic at $\sigma_2 =\sigma$ since each of the three terms above is analytic at $\sigma_2=\sigma$ (for the second or third term, regard $\sigma_2$ as a complex variable and use the exponentially-decaying tail of the integrand). With a similar argument applied to the second term of (\ref{log-Phi-1}), we can then establish the analyticity of (\ref{log-Phi-1}).  So, to prove $H(Z)$ is not analytic at $\sigma_2=\sigma$, it suffices to show that (\ref{log-Phi-2}) is not analytic at $\sigma_2=\sigma$.

Suppose, by way of contradiction, that (\ref{log-Phi-2}) is analytic at $\sigma$, or equivalently, the following function of $\omega$
$$
\int_{0}^{\infty} \left( \frac{1}{2} \frac{\sigma^{-1}}{\sqrt{2\pi}}e^{-(z+1)^2 \sigma^{-2}}+\frac{1}{2} \frac{\omega}{\sqrt{2\pi}}e^{-(z-1)^2 \omega^2} \right) \log \left( 1+ \Phi_z(1/\omega) \right)dz
$$
\begin{equation} \label{variant-of-the-second-term}
+\int_{-\infty}^{0} \left( \frac{1}{2} \frac{\sigma^{-1}}{\sqrt{2\pi}}e^{-(z+1)^2 \sigma^{-2}}+\frac{1}{2} \frac{\omega}{\sqrt{2\pi}}e^{-(z-1)^2 \omega^2} \right) \log \left( 1+ \Phi_z^{-1}(1/\omega) \right)dz
\end{equation}
is analytic at $\sigma^{-1} \in \overline{\mathbb{C}_{\sigma^{-1}}(r)}$ (the closure of the $r$-neighborhood of $\sigma^{-1}$ in $\mathbb{C}$) for some $r > 0$, where, recalling from (\ref{Phiz}),
$$
\Phi_z(1/\omega)=\frac{\sigma^{-1}}{\omega} e^{(z-1)^2 \omega^2-(z+1)^2 \sigma^{-2}}.
$$
Then, by uniqueness, the analytic extension of (\ref{variant-of-the-second-term}) to $\overline{\mathbb{C}_{\sigma^{-1}}(r)}$ would agree with any analytic extension  along the circle $\{\sigma^{-1}+r e^{i \alpha}: \alpha \in [-\pi/2, 3\pi/2]\}$ (from $\alpha=-\pi/2$ to $\alpha=3\pi/2$). Such an analytic extension is obtained by regarding  $\omega$ as a complex variable on the circle (this is a valid analytic extension by virtue of the exponentially-decaying tails of the integrands in (\ref{variant-of-the-second-term})). Here, we remark that for any $r > 0$ and $\alpha$, there are at most two ``singular'' $z$ (note that the following inequality boils down to a system of two quadratic equations in $z$) such that
$$
\Phi_z(1/(\sigma^{-1} + r e^{i \alpha})) = -1,
$$
which means $\log \left( 1+ \Phi_z(1/(\sigma^{-1} + r e^{i \alpha})) \right)$ or $\log \left( 1+ \Phi_z^{-1}(1/(\sigma^{-1} + r e^{i \alpha})) \right)$ would ``blow up'' at such $z$. However, an easy bounding argument (roughly speaking, the two ``blowing up'' terms will only do so ``slowly'') yields that during the analytic extension, (\ref{variant-of-the-second-term}) is still well-defined with the presence of such singular $z$, and so the above analytic extension is indeed valid.

Next, we will find a contradiction by showing that the analytic extension of (\ref{variant-of-the-second-term}) disagrees at $\alpha=-\pi/2$ and $\alpha=3\pi/2$. Setting $\omega = \sigma^{-1}+r e^{i \alpha}$, we then have
$$
\frac{\sigma^{-1}}{w}=\frac{\sigma^{-1}}{\sigma^{-1}+r \cos \alpha + i r \sin \alpha}=\frac{\sigma^{-2}+\sigma^{-1} r \cos \alpha-i \sigma^{-1} r \sin \alpha}{\sigma^{-2}+2 \sigma^{-1} r \cos \alpha+ r^2} \triangleq e^{a(r, \alpha)+ i b(r, \alpha)},
$$
where one can easily check that
$$
a(r, \alpha)=O(r), ~~~ b(r, \alpha)=O(r), ~~~ \frac{\partial b(r, \alpha)}{\partial \alpha}=O(r).
$$
Then, some straightforward computations yield that
$$
\Phi_z(1/\omega)=e^{A(z, r, \alpha)} e^{i B(z, r, \alpha)},
$$
where
$$
A(z, r, \alpha) \triangleq 2 (z-1)^2 \sigma^{-1} r \cos \alpha+(z-1)^2 r^2 \cos 2 \alpha-4 z \sigma^{-2}+a(r, \alpha),
$$
and
$$
B(z, r, \alpha) \triangleq 2 (z-1)^2 \sigma^{-1} r \sin \alpha+(z-1)^2 r^2 \sin 2 \alpha+b(r, \alpha).
$$

Now, for some small yet fixed $\eps > 0$, choose $N > 0$ large enough and then $r > 0$ small enough such that
\begin{itemize}
\item[(I)] for all $0 \leq z \leq N$ and all $\alpha \in [-\pi/2, 3\pi/2]$, $B(z, r, \alpha) \in (-\pi, \pi)$;
\item[(II)] for all $z \geq N$ and all $\alpha \in [-\pi/2+\eps, \pi/2-\eps] \cup [\pi/2+\eps, 3\pi/2-\eps]$,
$$
4z\sigma^{-2} \gg |a(r, \alpha)|, ~~~ |2 (z-1)^2 \sigma^{-1} r \cos \alpha| \gg |(z-1)^2 r^2 \cos 2 \alpha+a(r, \alpha)|
$$
and
$$
\left|\frac{\partial}{\partial \alpha} (2 (z-1)^2 \sigma^{-1} r \sin \alpha)\right| \gg \left|\frac{\partial}{\partial \alpha} ((z-1)^2 r^2 \sin 2 \alpha+b(r, \alpha))\right|.
$$
\end{itemize}

Note that for all $0 \leq z \leq N$, by (I), $\Phi_z(1/\omega)$ will not go around $-1$ (in any direction) for one complete round as $\alpha$ increases from $-\pi/2$ to $3\pi/2$. Next, we consider the case when $z \geq N$. Notice that, by (II), for any fixed $z \geq N$, as $\alpha$ increases from $-\pi/2+\eps$ to $\pi/2-\eps$, $B(z, r, \alpha)$ increases as well. If, for some $z \geq N$ and $\alpha_0 \in [-\pi/2, -\pi/2+\eps] \cup [\pi/2-\eps, \pi/2]$,
$$
A(z, r, \alpha_0) > 0,
$$
it then follows from (II) that there exists $\ell=\ell(\eps) > 0$ such that $\ell \to \infty$ as $\eps \to 0$ and for the same $z$ and any $\alpha \in [-\ell \eps, \ell \eps]$,
$$
A(z, r, \alpha) > 0.
$$
On the other hand, it follows from (II) that for any $z \geq N$ and for any $\alpha \in [\pi/2+\eps, 3 \pi/2-\eps]$,
$$
A(z, r, \alpha) < 0;
$$
straightforward computations also yield that for any $z \geq N$ and for any $\alpha \in [\pi/2, \pi/2+\eps] \cup [3 \pi/2-\eps, 3 \pi/2]$,
$$
A(z, r, \alpha) < 0.
$$
It then follows that $\Phi_z(1/\omega)$ will not go around $-1$ (in any direction) for one complete round as $\alpha$ increases from $\pi/2$ to $3\pi/2$.

We are now ready to conclude that as $\alpha$ increases from $-\pi/2$ to $3\pi/2$, for any $z \geq N$ with $\Phi_z(1/(\sigma^{-1} + r e^{i \alpha})) \neq -1$, $\Phi_z(1/\omega)$ will go around $-1$ anti-clockwise $k(z)$ times, where $k(z)$ is a non-negative integer; meanwhile, one checks that when $z$ is large enough, $k(z)$ is strictly positive. The idea can be roughly described as follows. Consider the ``trajectory'' of $\Phi_z(1/\omega)$ as $\alpha$ increases from $-\pi/2$ to $3\pi/2$. Obviously, $A(z, r, \alpha) > 0$ means the magnitude of the corresponding ``location'' is strictly bigger than $1$; $B(z, r, \alpha) > 0$ means at the corresponding ``location'', $\Phi_z(1/\omega)$ is going anti-clockwise. The above argument shows that given sufficiently small $\eps$ (and thus $\ell$ sufficiently large), for all the time when the ``location'' is at least $1$ away from the origin, ``more often'' $\Phi_z(1/\omega)$ goes around $-1$ anti-clockwise (for any $\alpha \in [-\pi/2, -\pi/2+\eps] \cup [\pi/2-\eps, \pi/2]$, $\Phi_z(1/\omega)$ may go around $-1$ clockwise, whereas for all $\alpha \in [-\ell \eps, \ell \eps]$, $\Phi_z(1/\omega)$ must go around $-1$  anti-clockwise).

So, for any analytic extension along the circle $\{\sigma^{-1}+r e^{i \alpha}: \alpha \in [-\pi/2, 3\pi/2]\}$ (from $\alpha=-\pi/2$ to $\alpha=3\pi/2$), we have proven that for any $z \geq 0$ with $\Phi_z(1/(\sigma^{-1} + r e^{i \alpha})) \neq -1$,
$$
\hspace{-1cm} \Im \left(\lim_{\alpha \to (3\pi/2)-} \log \left(1 + \Phi_z(1/(\sigma^{-1} + r e^{i \alpha}))
\right) \right)=\Im \left(\lim_{\alpha \to (-\pi/2)+} \log \left(1 + \Phi_z(1/(\sigma^{-1} + r e^{i \alpha})) \right) \right)+ 2k(z)\pi i,
$$
where $k(z)$ is a non-negative integer for all $z$ and a strictly positive integer for
all sufficiently large $z$. Using a similar argument, we can also prove that for any $z \leq 0$ with $\Phi_z^{-1}(1/(\sigma^{-1} + r e^{i \alpha})) \neq -1$,
$$
\hspace{-1cm} \Im \left(\lim_{\alpha \to (3\pi/2)-} \log \left(1 + \Phi_z^{-1}(1/(\sigma^{-1} + r e^{i \alpha}))
\right) \right)=\Im \left(\lim_{\alpha \to (-\pi/2)+} \log \left(1 + \Phi_z^{-1}(1/(\sigma^{-1} + r e^{i \alpha})) \right) \right)+ 2k(z)\pi i,
$$
where $k(z)$ is a non-negative integer for all $z$ and a strictly positive integer for
all sufficiently large $|z|$. This, however, implies that for the above analytic extension, (\ref{variant-of-the-second-term}) disagrees at $\alpha=-\pi/2$ and $\alpha=3\pi/2$, which is a
contradiction.

\end{exmp}

\section{A Complex Hilbert Metric} \label{Hilbert}

Recall that $\tilde{W}$ denote the complex version of $W$,
$$
\tilde{W}=\{w=(w_1, w_2, \cdots, w_{l}) \in \mathbb{C}^{l} : \sum_i
w_i=1\}.
$$
Let
$$
\tilde{W}^{+} = \{v \in \tilde{W}: \Re(v_i/v_j) >0 \mbox{ for all }
i, j\}.
$$
For $v,w \in \tilde{W}^{+}$, define
\begin{equation}  \label{ComplexHilbertMetric} \tilde{d}_H(v, w)=
\max_{i, j} \left| \log \left( \frac{w_i/w_j}{v_i/v_j} \right)
\right|,
\end{equation}
where $\log$ is taken as the principal branch of the complex
$\log(\cdot)$ function (i.e., the branch whose branch cut is the
negative real axis).  Since the principal branch of $\log$ is
additive on the right-half plane, $\tilde{d}_H$ is a metric on
$\tilde{W}^{+}$, which we call a {\em complex Hilbert metric} (for alternative complex Hilbert metrics, see~\cite{Rugh} and~\cite{Dubois}).

Let $M$ denote the set of all $l \times l$ stochastic matrices,
i.e.,
$$
M=\{\Pi=(\pi_{ij}) \in \mathbb{R}^{l \times l}: \pi_{ij} \geq 0, ~~
\sum_{j=1}^{l} \pi_{ij}=1\},
$$
and let $\tilde{M}$ denote the complex version of $M$, defined as
$$
\tilde{M}=\{\Pi=(\pi_{ij}) \in \mathbb{C}^{l \times l}:
\sum_{j=1}^{l} \pi_{ij} =1, \mbox{ for all } i\}.
$$
For a given positive $\Pi \in M$ and a small $\delta_1 > 0$, let
$\tilde{M}_{\Pi}(\delta_1)$  denote the $\delta_1$-neighborhood,
under the Euclidean metric, around $\Pi$ within $\tilde{M}$. For an
element $\tilde{\Pi} \in \tilde{M}_{\Pi}(\delta_1)$, similar to
(\ref{Induce}), $\tilde{\Pi}$ will induce a mapping
$f_{\tilde{\Pi}}$ on $\tilde{W}$. For a small $\delta_2 > 0$, let
$\tilde{W}_{W^{\circ}, H}(\delta_2)$ denote the
$\delta_2$-neighborhood of $W^{\circ}$ within $\tilde{W}^+$ under
the complex Hilbert metric, i.e.,
$$
\hspace{-0.3cm} \tilde{W}_{W^{\circ}, H}(\delta_2)=\{v=(v_1, v_2,
\cdots, v_{l}) \in \tilde{W}^+: \tilde{d}_H(v, u) \leq \delta_2,
\mbox{ for some } u \in W^{\circ}\}.
$$
The main result of~\cite{hm09a} states:
\begin{thm} \label{complex-contraction}
Let $\Pi$ be a positive matrix in $M$. For sufficiently small
$\delta_1, \delta_2 > 0$, there exists $0 < \rho_1 < 0$ such that
for any $\tilde{\Pi} \in \tilde{M}_{\Pi}(\delta_1)$,
$f_{\tilde{\Pi}}$ is a $\rho_1$-contraction mapping on
$\tilde{W}_{W^{\circ}, H}(\delta_2)$ under the complex Hilbert
metric in (\ref{ComplexHilbertMetric}).
\end{thm}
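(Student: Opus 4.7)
The plan is to bootstrap from Theorem~\ref{real-contraction} (the real Birkhoff contraction) in two stages: first extend the contraction statement from $W^{\circ}$ to the complex Hilbert-neighborhood $\tilde{W}_{W^{\circ},H}(\delta_2)$ while keeping the matrix $\Pi$ fixed, and then absorb the perturbation $\tilde{\Pi}\in\tilde{M}_{\Pi}(\delta_1)$ by continuity. Note first that the ``$0<\rho_1<0$'' in the statement is a typo for $0<\rho_1<1$.

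For the invariance step, I would check that for $\delta_1,\delta_2$ small enough, $f_{\tilde{\Pi}}$ sends $\tilde{W}_{W^{\circ},H}(\delta_2)$ into $\tilde{W}^{+}$, and in fact back into $\tilde{W}_{W^{\circ},H}(\delta_2)$. Since the defining ratios
\[
\frac{(f_{\tilde{\Pi}}(v))_i}{(f_{\tilde{\Pi}}(v))_j}=\frac{\sum_k v_k\tilde{\pi}_{ki}}{\sum_k v_k\tilde{\pi}_{kj}}
\]
depend continuously on $(v,\tilde{\Pi})$ and reduce to strictly positive real numbers when $v\in W^{\circ}$ and $\tilde{\Pi}=\Pi$, a uniform compactness argument on any $\tilde{d}_H$-ball of radius $\delta_2$ about a compact subset of $W^{\circ}$ supplies the invariance. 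This is also what makes $\tilde{d}_H$ a genuine metric on the relevant domain, because the principal branch of $\log$ is additive on the right half-plane.

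For the contraction estimate, I would write $\tilde{\pi}_{ki}=\pi_{ki}+\eta_{ki}$ with $|\eta_{ki}|\le\delta_1$ and expand $R_{ij}(v):=(f_{\tilde{\Pi}}(v))_i/(f_{\tilde{\Pi}}(v))_j$ as its value at $(v_0,\Pi)$ (for a real base point $v_0\in W^{\circ}$) plus a holomorphic perturbation that is $O(\delta_1+\delta_2)$. For the real principal part, Theorem~\ref{real-contraction} gives contraction by $\tau(\Pi)<1$ in the real Hilbert metric. To promote this to a contraction by some $\rho_1\in(\tau(\Pi),1)$ in $\tilde{d}_H$, the cleanest route is to write $\log(R_{ij}(w)/R_{ij}(v))$ along a straight-line segment in $\tilde{W}_{W^{\circ},H}(\delta_2)$ as an integral of a holomorphic derivative, observe that this derivative is uniformly close to its value at the real base points, and thereby obtain an $O(\delta_1+\delta_2)$ perturbation of the real Birkhoff bound.

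The main obstacle is the interplay between invariance and contraction: the principal-branch logarithm in \eqref{ComplexHilbertMetric} is only additive on the right half-plane, so the contraction estimate is valid only as long as all intermediate ratios $R_{ij}$ stay in $\tilde{W}^{+}$ along the chosen path between $v$ and $w$. One must therefore choose $\delta_1,\delta_2$ simultaneously small enough that (i) $\tilde{W}_{W^{\circ},H}(\delta_2)$ is $f_{\tilde{\Pi}}$-invariant and all straight-line segments inside it stay in $\tilde{W}^{+}$, and (ii) the perturbative bound of the previous paragraph strictly beats $1$. Once this joint choice is made, a uniform $\rho_1<1$ valid for every $\tilde{\Pi}\in\tilde{M}_{\Pi}(\delta_1)$ falls out, completing the proof.
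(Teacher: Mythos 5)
The paper does not prove Theorem~\ref{complex-contraction}; it is quoted verbatim from~\cite{hm09a} as that reference's main result, so there is no in-paper argument against which to compare your attempt. (You are right that ``$0 < \rho_1 < 0$'' is a typo for ``$0 < \rho_1 < 1$''.)

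Taking your sketch on its own merits, the outline is plausible but the central difficulty is not addressed: uniformity over the noncompact $W^{\circ}$. Both of your key steps appeal to a compactness argument that does not apply as stated. The invariance step rests on ``a uniform compactness argument \ldots about a compact subset of $W^{\circ}$'', but the theorem requires invariance and a single contraction ratio $\rho_1<1$ valid on the Hilbert-$\delta_2$-neighborhood of \emph{all} of $W^{\circ}$, and $W^{\circ}$ is unbounded in the Hilbert metric (the metric blows up at $\partial W$), so there is no compact set to run the continuity argument over. The same issue infects the contraction step: the assertion that the holomorphic derivative of $\log\bigl(R_{ij}(w)/R_{ij}(v)\bigr)$ is ``uniformly close to its value at the real base points'' with an $O(\delta_1+\delta_2)$ error must hold uniformly over all base points in $W^{\circ}$, and nothing in your sketch supplies that. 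The observation that actually rescues such a perturbative program is that $\Pi$ is strictly positive, so the ratios $(u\Pi)_i/(u\Pi)_j=\bigl(\sum_k u_k\pi_{ki}\bigr)/\bigl(\sum_k u_k\pi_{kj}\bigr)$ are bounded above and away from zero uniformly in $u\in W$; equivalently, $f_\Pi(W)$ is a compact subset of $W^{\circ}$. This is precisely what makes the real Birkhoff coefficient $\tau(\Pi)$ uniform over all of $W^{\circ}$ and is exactly what your $O(\delta_1+\delta_2)$ bound needs, but you never invoke it. A secondary gap: the integral-of-derivative step requires the straight-line segment from $v$ to $w$ to stay inside $\tilde{W}^{+}$, and $\tilde{W}_{W^{\circ},H}(\delta_2)$ is not obviously Euclidean-convex; you flag this, but it is a nontrivial check rather than a routine one.
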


For $\delta > 0$, let $\mathbb{C}_{\mathbb{R}^+}[\delta]$  denote the ``$\delta$-cone'' of
$\mathbb{R}^+$ within $\mathbb{C}$, i.e.,
$$
\mathbb{C}_{\mathbb{R}^+}[\delta]=\{x+y i \in \mathbb{C}: x > 0,
-\delta x \leq y \leq \delta x\}.
$$
The following Lemma can be easily checked.
\begin{lem}  \label{max-of-two}
For sufficiently small $\delta_1 > 0$, there exists a positive
constant $L_1$ such that for any $\alpha, \beta \in
\mathbb{C}_{\mathbb{R}^+}[\delta_1]$
$$
|\log \alpha - \log \beta| \leq L_1 \max
\left(\frac{|\alpha-\beta|}{|\alpha|},
\frac{|\alpha-\beta|}{|\beta|} \right).
$$
\end{lem}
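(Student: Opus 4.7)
The plan is to prove the estimate by integrating $1/z$ along the straight line segment from $\beta$ to $\alpha$. The key geometric observation is that $\mathbb{C}_{\mathbb{R}^+}[\delta_1]$ is a convex set contained in the open right half-plane, hence disjoint from the branch cut of the principal logarithm. Consequently, for any $\alpha,\beta$ in the cone, the segment $z(t) = (1-t)\beta + t\alpha$, $t \in [0,1]$, stays in the cone, and $\log$ is holomorphic on a neighborhood of this segment with derivative $1/z$.

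Given this, I would apply the fundamental theorem of calculus to write
$$
\log\alpha - \log\beta \;=\; (\alpha-\beta)\int_0^1 \frac{dt}{z(t)},
$$
which immediately yields $|\log\alpha - \log\beta| \le |\alpha-\beta|\,\sup_{t\in[0,1]}|z(t)|^{-1}$. The problem thus reduces to producing a lower bound on $|z(t)|$ of the form $C(\delta_1)\min(|\alpha|,|\beta|)$ uniformly in $t \in [0,1]$.

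For the lower bound I would use the following elementary fact: any $w = x+iy \in \mathbb{C}_{\mathbb{R}^+}[\delta_1]$ satisfies $|y|\le\delta_1 x$, hence $\Re(w) = x \ge |w|/\sqrt{1+\delta_1^2}$. Applying this to both $\alpha$ and $\beta$ and using the trivial estimate $|z(t)|\ge \Re(z(t)) = (1-t)\Re(\beta)+t\Re(\alpha)$ gives
$$
|z(t)| \;\ge\; \frac{(1-t)|\beta|+t|\alpha|}{\sqrt{1+\delta_1^2}} \;\ge\; \frac{\min(|\alpha|,|\beta|)}{\sqrt{1+\delta_1^2}}.
$$
Since $|\alpha-\beta|/\min(|\alpha|,|\beta|)$ equals $\max(|\alpha-\beta|/|\alpha|,\,|\alpha-\beta|/|\beta|)$, the lemma follows with the explicit constant $L_1 = \sqrt{1+\delta_1^2}$.

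There is no substantive obstacle; the argument rests entirely on the convexity of the cone and the elementary inequality $\Re(w) \ge |w|/\sqrt{1+\delta_1^2}$ for $w$ in the cone. In fact, the hypothesis ``sufficiently small $\delta_1$'' is not actually needed for the bound itself — the estimate holds for any $\delta_1 > 0$ with the constant above — but it is harmless and consistent with how the lemma is invoked in the proof of Theorem~\ref{main}.
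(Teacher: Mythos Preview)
Your proof is correct and complete; the integration-along-the-segment argument together with the lower bound $\Re(w)\ge |w|/\sqrt{1+\delta_1^2}$ for $w$ in the cone yields the lemma with the explicit constant $L_1=\sqrt{1+\delta_1^2}$. The paper itself gives no proof of this lemma (it simply states that it ``can be easily checked''), so there is nothing further to compare.
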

The following lemma essentially follows from the proof of Part $2$ of
Lemma 2.3 in~\cite{hm09a} (in particular, its Part $1$ is just a
rephrased version of Part $2$ of that lemma), allows
us to connect the complex Hilbert metric and the Euclidean metric.
We give a proof for completeness.
\begin{lem} \label{Brian}
\begin{enumerate}
\item For any $\delta > 0$, there exists $\xi > 0$ such that for any $\tilde{x} \in \tilde{W}^+$, $x \in W^{\circ}$ with $\tilde{d}_H(\tilde{x}, x) \leq \xi$, we have $\tilde{x}_i \in \tilde{W}_{W^{\circ}, H}(\delta)$ for all $i$.

\item For any $\zeta > 0$, there exists a constant $C > 0$ such that for any $\tilde{x}, \tilde{y} \in \tilde{W}^+$ with $|\tilde{x}-x|, |\tilde{y}-y| \leq \zeta$ for some $x, y \in W^{\circ}$, we have
    $$
    |\tilde{x}-\tilde{y}|\leq C \tilde{d}_H(\tilde{x}, \tilde{y}).
    $$
\end{enumerate}
\end{lem}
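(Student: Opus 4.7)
The plan is to handle Part 1 essentially by unfolding definitions, and to reserve the real work for Part 2. For Part 1, the hypothesis $\tilde{d}_H(\tilde{x}, x) \le \xi$ with $x \in W^\circ$ is precisely the condition defining membership of $\tilde{x}$ in $\tilde{W}_{W^\circ, H}(\xi)$, so choosing any $\xi \le \delta$ suffices. (If the intended hypothesis were Euclidean rather than Hilbert, i.e.\ $|\tilde{x} - x| \le \xi$, then one would bound below $|\tilde{x}_k|$ in terms of the coordinates of the particular $x$ and estimate $|\log((x_i \tilde{x}_j)/(x_j \tilde{x}_i))|$ via a first-order expansion of $\log$ around $1$, using that each ratio is close to $1$; this is a standard estimate, though the bound will depend on the distance of $x$ from $\partial W$.)

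For Part 2, I would proceed as follows. Let $\eps = \tilde{d}_H(\tilde{x}, \tilde{y})$. By the definition of $\tilde{d}_H$, all the principal logs $\log(\tilde{y}_k/\tilde{x}_k)$ lie within a disk of radius $\eps$ of some common complex number $\alpha$, so there exist $\eta_k$ with $|\eta_k| \le \eps$ such that $\tilde{y}_k = \tilde{x}_k \, e^{\alpha + \eta_k}$ for every $k$. Summing over $k$ and invoking $\sum_k \tilde{y}_k = \sum_k \tilde{x}_k = 1$ gives $\sum_k \tilde{x}_k \, e^{\alpha + \eta_k} = 1$. A first-order Taylor expansion of $e^{\alpha + \eta_k} - 1$ converts this into $\alpha + \sum_k \tilde{x}_k \eta_k + O(\eps^2 + |\alpha|^2) = 0$, which, together with the a priori bound $|\tilde{x}_k| \le 1 + \zeta$ (a consequence of $|\tilde{x} - x| \le \zeta$ and $x \in W$), forces $|\alpha| \le C_1 \eps$. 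Then $|\tilde{y}_k - \tilde{x}_k| = |\tilde{x}_k| \cdot |e^{\alpha + \eta_k} - 1| \le C_2 \eps$ for each $k$, and summing over the finitely many coordinates yields $|\tilde{y} - \tilde{x}| \le C \eps$ with $C$ depending only on $\zeta$ and $l$.

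The main obstacle will be careful bookkeeping with the principal branch of the complex logarithm. The identity $\log(\tilde{y}_i/\tilde{x}_i) - \log(\tilde{y}_j/\tilde{x}_j) = \log((\tilde{y}_i \tilde{x}_j)/(\tilde{y}_j \tilde{x}_i))$ is what allows extraction of a common $\alpha$, but additivity of the principal branch requires that the quotient $(\tilde{y}_i/\tilde{x}_i)/(\tilde{y}_j/\tilde{x}_j)$ lie in the right-half plane, which is not automatic merely from $\tilde{x}, \tilde{y} \in \tilde{W}^+$. I would handle this by first restricting to the regime where $\eps$ is small (so the two ratios are close, their quotient is close to $1$, and hence in the right-half plane); the complementary regime, where $\tilde{d}_H(\tilde{x}, \tilde{y})$ exceeds a fixed threshold, is absorbed into the constant $C$ via the trivial upper bound $|\tilde{x} - \tilde{y}| \le 2(1 + \zeta)\sqrt{l}$. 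Lemma~\ref{max-of-two}, already in the paper's toolkit, will be useful for converting the log-difference estimates into quotient estimates.
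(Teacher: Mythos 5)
Your reading of Part~1 is the right one: with the hypothesis stated in the Hilbert metric, the conclusion is immediate from the definition of $\tilde{W}_{W^\circ,H}(\delta)$, and in fact the paper only proves Part~2.

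For Part~2, your plan and the paper's plan rely on the same engine -- use $\sum_k \tilde{x}_k = \sum_k \tilde{y}_k = 1$ to pin down the global scaling factor between $\tilde{x}$ and $\tilde{y}$ -- but the paper circumvents both of the difficulties you flag by never taking a logarithm of a single ratio. It sets $\alpha_j := \tilde{x}_j/\tilde{y}_j$ (a complex number, not a log), observes from $\tilde{d}_H(\tilde{x},\tilde{y})=\xi$ small that $\bigl|\tfrac{\tilde{x}_i/\tilde{y}_i}{\alpha_j}-1\bigr|\leq C_1\xi$, hence $|\tilde{x}_i - \alpha_j\tilde{y}_i|\leq C_1\xi|\alpha_j||\tilde{y}_i|$, and then sums over $i$ using $\sum_i\tilde{x}_i=\sum_i\tilde{y}_i=1$ to get $|1-\alpha_j|\leq C_1(1+B\zeta)\xi|\alpha_j|$. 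For $\xi$ small this self-improves to an a priori bound on $|\alpha_j|$ and then $|1-\alpha_j|=O(\xi)$, whence $|\tilde{x}_j-\tilde{y}_j|=|\tilde{y}_j||1-\alpha_j|=O(\xi)$ coordinate-wise, finishing after noting $|\tilde{y}_j|\leq y_j+\zeta$. The large-$\xi$ case is absorbed into the constant exactly as you propose.

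Your route via a common $\alpha$ defined through principal logarithms has two genuine gaps. (i) The assertion that all principal logs $\log(\tilde{y}_k/\tilde{x}_k)$ lie within $\eps$ of a common $\alpha$ requires additivity of the principal branch, and your small-$\eps$ fix does not deliver it: a small cross-ratio only says the \emph{quotient} $\tfrac{\tilde{y}_i/\tilde{x}_i}{\tilde{y}_j/\tilde{x}_j}$ is near $1$, not that the individual principal arguments of $\tilde{y}_k/\tilde{x}_k$ avoid straddling the cut (two arguments near $\pm\pi$ produce a cross-ratio near $1$ while the difference of principal logs is near $2\pi i$). The repair is to take $\alpha$ to be some branch (not a priori the principal one) of $\log(\tilde{y}_1/\tilde{x}_1)$ and set $\eta_k:=\log\bigl(\tfrac{\tilde{y}_k/\tilde{x}_k}{\tilde{y}_1/\tilde{x}_1}\bigr)$ with the principal branch; then $|\eta_k|\leq\eps$ and $\tilde{y}_k=\tilde{x}_k e^{\alpha+\eta_k}$ with no additivity needed. (ii) Your step ``forces $|\alpha|\leq C_1\eps$'' is circular: the expansion $e^{\alpha+\eta_k}=1+(\alpha+\eta_k)+O(\cdot)$ is only controlled when $|\alpha|$ is already small, but the branch in (i) carries no a priori size bound. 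The fix is to solve for $e^\alpha$ before expanding: $e^\alpha = 1/\sum_k\tilde{x}_k e^{\eta_k}$, and $\sum_k\tilde{x}_k e^{\eta_k}=1+\sum_k\tilde{x}_k(e^{\eta_k}-1)=1+O(\eps)$, so $e^\alpha$ is near $1$ and the branch near $0$ gives $|\alpha|=O(\eps)$. With these two repairs your argument closes and is, in substance, the paper's proof written additively rather than multiplicatively -- but note that the multiplicative form removes both issues at once, which is why the paper prefers it.
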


\begin{proof}

We only prove Part $2$. Let $\xi = \tilde{d}_H(\tilde{x},
\tilde{y})$. Then we have for all
$i, j$,
$$
\left| \log \left(
\frac{\tilde{x}_i/\tilde{y}_i}{\tilde{x}_j/\tilde{y}_j} \right)
\right| \leq \xi.
$$
There exists $C_1 >0$ such that for $\xi$ sufficiently small, and
for all $i, j$, $\left|
\frac{\tilde{x}_i/\tilde{y}_i}{\tilde{x}_j/\tilde{y}_j} -1 \right|
\leq C_1 \xi$. Let $\alpha_j = \tilde{x}_j/\tilde{y}_j$.  Then for
all $i,j$,
$$
|\tilde{x}_i - \alpha_j \tilde{y}_i|  \le C_1 \xi |\alpha_j|
|\tilde{y}_i|,
$$
and so
$$
|1 - \alpha_j|= \left| \sum_{i=1}^n (\tilde{x}_i - \alpha_j
\tilde{y}_i) \right| \le \sum_{i=1}^n |\tilde{x}_i - \alpha_j
\tilde{y}_i| \le C_1 \xi |\alpha_j| \sum_{i=1}^n |\tilde{y}_i| = C_1
(1+ B \zeta) \xi |\alpha_j|.
$$
It follows that $|\tilde{x}_j - \tilde{y}_j| \le C_1 (1+ B \zeta)
\xi |\tilde{x}_j| \leq C_1 (1+ B \zeta) \xi (x_j + \zeta)$, which
implies Part $2$, if $\xi$ is sufficiently small.
\end{proof}

\section{Proof of Theorem~\ref{main}} \label{III}

The following lemma is an analog of Lemma~\ref{Cauchy-Uniformity}.

\begin{lem} \label{Uniformity}
For any $\delta > 0$, there exist $r_1, r_2 > 0$ such that for any $(\vec{\eps}, \vec{\theta}) \in \mathbb{C}_{\vec{\eps}_0}^{m_1}(r_1) \times \mathbb{C}_{\vec{\theta}_0}^{m_2}(r_2)$, any $z \in \mathcal{Z}$ and any $x \in W$, we have
$$
\tilde{d}_H(f^{\vec{\eps}, \vec{\theta}}_{z}(x), f^{\vec{\eps}_0, \vec{\theta}_0}_{z}(x)) \leq \delta.
$$
\end{lem}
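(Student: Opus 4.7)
The plan is to imitate the proof of Lemma~\ref{Cauchy-Uniformity}, but with the complex Hilbert metric in place of the Euclidean one; the payoff is that the channel-parameter assumption (\ref{dominated-by-real}) is exactly what is needed to make ratios of the form $q^{\vec\theta}(z|i)/q^{\vec\theta}(z|j)$ behave well under the principal branch of $\log$. First I would expand, using (\ref{Piz}) and the formula (\ref{Induce}) for $f_z$,
\[
\frac{f^{\vec{\eps},\vec{\theta}}_{z}(x)_i}{f^{\vec{\eps},\vec{\theta}}_{z}(x)_j}
\;=\;
\frac{q^{\vec\theta}(z|i)}{q^{\vec\theta}(z|j)}
\cdot
\frac{\sum_{k}x_{k}\pi^{\vec\eps}_{ki}}{\sum_{k}x_{k}\pi^{\vec\eps}_{kj}},
\]
so that the ratio appearing in the definition (\ref{ComplexHilbertMetric}) of $\tilde d_H$ factors as $A_{ij}\cdot B_{ij}$, where
\[
A_{ij}=\frac{q^{\vec\theta}(z|i)/q^{\vec\theta_0}(z|i)}{q^{\vec\theta}(z|j)/q^{\vec\theta_0}(z|j)},\qquad
B_{ij}=\frac{\bigl(\sum_{k}x_{k}\pi^{\vec\eps}_{ki}\bigr)/\bigl(\sum_{k}x_{k}\pi^{\vec\eps_0}_{ki}\bigr)}{\bigl(\sum_{k}x_{k}\pi^{\vec\eps}_{kj}\bigr)/\bigl(\sum_{k}x_{k}\pi^{\vec\eps_0}_{kj}\bigr)}.
\]

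Next I would control the channel factor $A_{ij}$ using (\ref{dominated-by-real})(ii): given any $\delta'>0$, one can choose $r_2$ so small that $|\log(q^{\vec\theta}(z|y)/q^{\vec\theta_0}(z|y))|\le\delta'$ for every $(y,z)$ and every $\vec\theta\in\mathbb{C}^{m_2}_{\vec\theta_0}(r_2)$, hence each quotient $q^{\vec\theta}(z|y)/q^{\vec\theta_0}(z|y)$ lies in an $O(\delta')$-neighborhood of $1$, and therefore so does $A_{ij}$. To control the Markov factor $B_{ij}$, I would use Condition~(a) and the analytic dependence of $\pi^{\vec\eps}$ on $\vec\eps$: the strict positivity of $\Pi^{\vec\eps_0}$ guarantees a uniform lower bound $\sum_k x_k \pi^{\vec\eps_0}_{ki}\ge\min_{k,i}\pi^{\vec\eps_0}_{ki}>0$ valid for all $x\in W$, while continuity of $\pi^{\vec\eps}_{ki}$ in $\vec\eps$ makes the numerator of each single quotient in $B_{ij}$ arbitrarily close to $1$ uniformly in $x\in W$, once $r_1$ is small enough. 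Hence $B_{ij}$ also lies in an arbitrarily small neighborhood of $1$.

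Putting the two estimates together, I obtain $|A_{ij}B_{ij}-1|\le C\delta'$ for a universal $C$, uniformly in $z\in\mathcal Z$, $x\in W$ and $i,j$. Since $A_{ij}B_{ij}$ then lies in a disk around $1$ on which the principal branch of $\log$ is holomorphic and satisfies $|\log w|\le 2|w-1|$, this yields $|\log(A_{ij}B_{ij})|\le 2C\delta'$; taking the maximum over $i,j$ and choosing $\delta'=\delta/(2C)$ at the outset produces the claimed bound on $\tilde d_H(f^{\vec\eps,\vec\theta}_z(x),f^{\vec\eps_0,\vec\theta_0}_z(x))$. The main technical point to be careful about is simply that the principal branch really is available throughout; this is where (\ref{dominated-by-real})(i) is used in the background, since it guarantees that every $q^{\vec\theta}(z|y)$ (and hence every ratio of such) sits inside a narrow cone about the positive real axis, well away from the branch cut. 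I do not anticipate any genuine obstacle beyond this bookkeeping, as the uniformity in $x\in W$ and $z\in\mathcal Z$ is built into hypotheses (a) and (\ref{dominated-by-real}) themselves.
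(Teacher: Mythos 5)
Your proposal is correct and takes essentially the same approach as the paper: both decompose the ratio in $\tilde d_H$ into a Markov-transition factor controlled by strict positivity of $\Pi^{\vec\eps_0}$ and a channel factor controlled by condition~(\ref{dominated-by-real})(ii). The only cosmetic difference is that the paper splits $\log$ additively into four terms and applies the triangle inequality, whereas you bound $|A_{ij}B_{ij}-1|$ and then take a single $\log$ near $1$; these are equivalent.
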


\begin{proof}

Since all $\pi_{ij}(\vec{\eps}_0)$ are strictly positive, for any $\delta_1 > 0$, there exists $r_1 > 0$ such that for all $i, j$ and all $\vec{\eps} \in \mathbb{C}_{\vec{\eps}_0}^{m_1}(r_1)$,
$$
\frac{|\pi_{ij}^{\vec{\eps}}-\pi_{ij}(\vec{\eps}_0)|}{\pi_{ij}(\vec{\eps}_0)}
\leq \delta_1.
$$
Now, for any $x=(x_1, x_2, \cdots, x_{l}) \in W$, any $j$ and any $\vec{\eps} \in \mathbb{C}_{\vec{\eps}_0}^{m_1}(r_1)$, we have
$$
\left| \frac{\sum_{i=1}^{l}x_i
(\pi_{ij}^{\vec{\eps}}-\pi_{ij}(\vec{\eps}_0))}{\sum_{i=1}^{l}x_i
\pi_{ij}(\vec{\eps}_0)} \right| =\left| \frac{\sum_{i=1}^{l}  x_i
\pi_{ij}(\vec{\eps}_0)
(\pi_{ij}^{\vec{\eps}}-\pi_{ij}(\vec{\eps}_0))/\pi_{ij}(\vec{\eps}_0)}{\sum_{i=1}^{l}x_i
\pi_{ij}(\vec{\eps}_0)} \right| \leq \delta_1.
$$
Thus, for any $\delta_2 > 0$, choosing $\delta_1$ sufficiently small, we have
$$
\left| \log \frac{\sum_{i=1}^{l}x_i \pi_{ij}^{\vec{\eps}}}{
\sum_{i=1}^{l}x_i \pi_{ij}(\vec{\eps}_0)} \right| =\left| \log
\left(1+ \frac{\sum_{i=1}^{l}x_i
(\pi_{ij}^{\vec{\eps}}-\pi_{ij}(\vec{\eps}_0)) }{\sum_{i=1}^{l}x_i
\pi_{ij}(\vec{\eps}_0)} \right) \right| \leq \delta_2.
$$
Notice that
$$
\tilde{d}_H(f^{\vec{\eps}, \vec{\theta}}_{z}(x), f^{\vec{\eps}_0,
\vec{\theta}_0}_{z}(x))=\max_{j, k} \left| \log
\frac{\sum_{i=1}^{l}x_i \pi_{ij}^{\vec{\eps}}
q^{\vec{\theta}}(z|j)}{\sum_{i=1}^{l}x_i \pi_{ij}(\vec{\eps}_0)
q^{\vec{\theta}_0}(z|j)}-\log \frac{\sum_{i=1}^{l}x_i
\pi_{ik}^{\vec{\eps}} q^{\vec{\theta}}(z|k)}{\sum_{i=1}^{l}x_i
\pi_{ik}(\vec{\eps}_0) q^{\vec{\theta}_0}(z|k)} \right|
$$
$$
=\max_{j, k} \left| \log \frac{\sum_{i=1}^{l}x_i
\pi_{ij}^{\vec{\eps}}}{\sum_{i=1}^{l}x_i
\pi_{ij}(\vec{\eps}_0)}+\log
\frac{q^{\vec{\theta}}(z|j)}{q^{\vec{\theta}_0}(z|j)}-\log
\frac{\sum_{i=1}^{l}x_i \pi_{ik}^{\vec{\eps}}}{\sum_{i=1}^{l}x_i
\pi_{ik}(\vec{\eps}_0)}-\log
\frac{q^{\vec{\theta}}(z|k)}{q^{\vec{\theta}_0}(z|k)} \right|.
$$
It then follows from the second inequality of (\ref{dominated-by-real}) that for any $\delta > 0$, there exist $r_1, r_2 > 0$ such that for any $(\vec{\eps}, \vec{\theta}) \in \mathbb{C}_{\vec{\eps}_0}^{m_1}(r_1) \times \mathbb{C}_{\vec{\theta}_0}^{m_2}(r_2)$ and any $x \in W$, we have
$$
\tilde{d}_H(f^{\vec{\eps}, \vec{\theta}}_{z}(x), f^{\vec{\eps}_0, \vec{\theta}_0}_{z}(x)) \leq \delta.
$$

\end{proof}

The following lemma, roughly speaking, says that when we perturb $(\vec{\eps}_0, \vec{\theta}_0)$ ``a bit'' to $(\vec{\eps}, \vec{\theta})$, $f^{\vec{\eps}, \vec{\theta}}_{z}$ is still a contraction mapping on a complex neighborhood of $W^{\circ}$, and the contraction coefficient is uniform over all the values of $z$ and $\vec{\theta}$. More precisely, recalling $\tilde{W}_{W^{\circ}, H}(\delta)$ denote the $\delta$-neighborhood of $W^{\circ}$ of $\tilde{W}$ under the complex Hilbert metric, we have
\begin{lem} \label{z-complex-contraction}
For sufficiently small $r_1, r_2, \delta > 0$, there exists $0 < \rho_1 < 1$ such that for any $(\vec{\eps}, \vec{\theta}) \in \mathbb{C}_{\vec{\eps}_0}^{m_1}(r_1) \times \mathbb{C}_{\vec{\theta}_0}^{m_2}(r_2)$ and any $z \in \mathcal{Z}$, $f^{\vec{\eps}, \vec{\theta}}_{z}$ is a $\rho_1$-contraction mapping on $\tilde{W}_{W^{\circ}, H}(\delta)$ under the complex Hilbert metric in (\ref{ComplexHilbertMetric}).
\end{lem}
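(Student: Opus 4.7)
The plan is to exploit the factorization $f_z^{\vec{\eps},\vec{\theta}} = g_z^{\vec{\theta}} \circ \ell^{\vec{\eps}}$, where
\[
\ell^{\vec{\eps}}(w) = w\Pi^{\vec{\eps}}, \qquad g_z^{\vec{\theta}}(v)_j = q^{\vec{\theta}}(z|j)\,v_j \Big/ \sum_{k} q^{\vec{\theta}}(z|k)\,v_k,
\]
which follows from the identity $(w\Pi^{\vec{\eps},\vec{\theta}}(z))_j = q^{\vec{\theta}}(z|j)(w\Pi^{\vec{\eps}})_j$. Note that $\ell^{\vec{\eps}}$ agrees with $f_{\Pi^{\vec{\eps}}}$ on $\tilde W$: the identity $\sum_j\pi_{ij}^{\vec{\eps}}=1$ holds for real $\vec{\eps}$ and extends by analytic continuation to $\mathbb{C}_{\vec{\eps}_0}^{m_1}(r_1)$, so $w\Pi^{\vec{\eps}}\mathbf{1}=w\mathbf{1}=1$ whenever $w\in\tilde W$. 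The effect is to concentrate the $(z,\vec{\theta})$-dependence in the diagonal factor $g_z^{\vec{\theta}}$, while the contractive factor $\ell^{\vec{\eps}}$ is independent of both.

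I would then treat the two factors separately. By Condition (a), $\Pi^{\vec{\eps}_0}$ is a positive stochastic matrix, so by analyticity $\Pi^{\vec{\eps}}\in \tilde M_{\Pi^{\vec{\eps}_0}}(\delta_1)$ for sufficiently small $r_1$, and Theorem~\ref{complex-contraction} supplies $\delta_2,\rho_1$ such that $\ell^{\vec{\eps}}$ is a $\rho_1$-contraction of $\tilde W_{W^\circ, H}(\delta_2)$ under $\tilde d_H$. The factor $g_z^{\vec{\theta}}$ is an \emph{isometry} of $\tilde d_H$: since $(g_z^{\vec{\theta}}(v))_i/(g_z^{\vec{\theta}}(v))_j = (q^{\vec{\theta}}(z|i)/q^{\vec{\theta}}(z|j))(v_i/v_j)$, the $q$-ratios cancel out of the cross-ratio inside the log in (\ref{ComplexHilbertMetric}) and $\tilde d_H(g_z^{\vec{\theta}}(v),g_z^{\vec{\theta}}(w))=\tilde d_H(v,w)$. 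Composing,
\[
\tilde d_H(f_z^{\vec{\eps},\vec{\theta}}(v), f_z^{\vec{\eps},\vec{\theta}}(w)) = \tilde d_H(\ell^{\vec{\eps}}(v), \ell^{\vec{\eps}}(w)) \leq \rho_1\,\tilde d_H(v,w),
\]
with $\rho_1$ uniform in $z$ and $\vec{\theta}$.

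Two loose ends remain. First, the isometry computation requires $g_z^{\vec{\theta}}(\ell^{\vec{\eps}}(v)) \in \tilde W^+$ so that the principal-branch logs are well-defined; I would verify this by a direct cone argument, using hypothesis (\ref{dominated-by-real})(i) to place each $q^{\vec{\theta}}(z|y)$ in an arbitrarily narrow cone about $\mathbb{R}^+$ on shrinking $r_2$, and noting that $\ell^{\vec{\eps}}(v) \in \tilde W_{W^\circ, H}(\delta_2)$ has each ratio $v_i/v_j$ in a similarly narrow cone about $\mathbb{R}^+$, so the products still lie in the right half plane. Second, the self-map property on $\tilde W_{W^\circ, H}(\delta)$ (for some $\delta\le\delta_2$) follows from a triangle-inequality argument: for $v\in \tilde W_{W^\circ, H}(\delta)$, pick $u\in W^\circ$ with $\tilde d_H(v,u)\leq\delta$; since $\Pi^{\vec{\eps}_0}$ is positive, $f_z^{\vec{\eps}_0,\vec{\theta}_0}(u) \in W^\circ$, and
\[
\tilde d_H\!\bigl(f_z^{\vec{\eps},\vec{\theta}}(v), f_z^{\vec{\eps}_0,\vec{\theta}_0}(u)\bigr) \leq \rho_1 \tilde d_H(v,u) + \tilde d_H\!\bigl(f_z^{\vec{\eps},\vec{\theta}}(u), f_z^{\vec{\eps}_0,\vec{\theta}_0}(u)\bigr) \leq \rho_1\delta + (1-\rho_1)\delta = \delta,
\]
once $r_1,r_2$ are shrunk so that Lemma~\ref{Uniformity} pushes the second term below $(1-\rho_1)\delta$. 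The hard part is precisely the circularity between the isometry step and staying inside $\tilde W^+$: the isometry of $g_z^{\vec{\theta}}$ presupposes that its image lies in $\tilde W^+$, which is a weakened form of the invariance we are trying to prove. Breaking this circularity is the role of the direct cone argument above, and is the only place the hypothesis (\ref{dominated-by-real})(i) is really needed.
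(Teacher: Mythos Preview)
Your proposal is correct and follows essentially the same approach as the paper: your factorization $f_z^{\vec{\eps},\vec{\theta}} = g_z^{\vec{\theta}} \circ \ell^{\vec{\eps}}$ together with the isometry of $g_z^{\vec{\theta}}$ under $\tilde d_H$ is exactly the paper's key identity $\tilde{d}_H(u\Pi^{\vec{\eps},\vec{\theta}}(z), v\Pi^{\vec{\eps},\vec{\theta}}(z)) = \tilde{d}_H(u\Pi^{\vec{\eps}}, v\Pi^{\vec{\eps}})$, after which both arguments invoke Theorem~\ref{complex-contraction}; your cone argument via (\ref{dominated-by-real})(i) for well-definedness in $\tilde W^+$ is what the paper compresses into one sentence (its citation of (\ref{one-column-dominant}) there is evidently a typo for (\ref{dominated-by-real})). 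Your ``second loose end'' --- the self-map property --- is not actually part of this lemma in the paper but is deferred to Part~1 of Lemma~\ref{a-b}, whose proof is precisely the triangle-inequality argument you sketch.
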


\begin{proof}
By (\ref{one-column-dominant}), we can choose $r_1, r_2, \delta > 0$ sufficiently small such that for any $z \in \mathcal{Z}$, any $(\vec{\eps},\vec{\theta}) \in \mathbb{C}_{\vec{\eps}_0}^{m_1}(r_1) \times \mathbb{C}_{\vec{\theta}_0}^{m_2}(r_2)$ and any $u, v \in \tilde{W}_{W^{\circ}, H}(\delta)$,
$$
\tilde{d}_H(u \Pi^{\vec{\eps},\vec{\theta}}(z), v \Pi^{\vec{\eps},\vec{\theta}}(z))
$$
is well-defined. Moreover, it can be easily checked that
\begin{equation}  \label{key-observation}
\tilde{d}_H(u \Pi^{\vec{\eps},\vec{\theta}}(z), v \Pi^{\vec{\eps},\vec{\theta}}(z))=\tilde{d}_H(u \Pi^{\vec{\eps}}, v \Pi^{\vec{\eps}}).
\end{equation}
The lemma then immediately follows from Theorem~\ref{complex-contraction}.
\end{proof}

The following lemma is also needed.

\begin{lem}  \label{a-b}
\begin{enumerate}
\item For any $\delta > 0$, there exist $r_1, r_2 > 0$ such that for any $(\vec{\eps}, \vec{\theta}) \in \mathbb{C}_{\vec{\eps}_0}^{m_1}(r_1) \times \mathbb{C}_{\vec{\theta}_0}^{m_2}(r_2)$ and for any $z_{-n}^0 \in \mathcal{Z}^{n+1}$ and $-n-1 \leq i \leq -1$,
\begin{equation} \label{confined-orbit}
x_i^{\vec{\eps}, \vec{\theta}}(z_{-n}^i) \in \tilde{W}_{W^{\circ}, H}(\delta),
\end{equation}
and
\begin{equation} \label{stay-in-cone}
p^{\vec{\eps}, \vec{\theta}}(z_0|z_{-n}^{-1}) \in \mathbb{C}_{\mathbb{R}^+}[\delta].
\end{equation}

\item There exist $r_1, r_2 > 0$ such that for all $z_{-n}^0 \in \mathcal{Z}^{n+1}$, $p^{\vec{\eps}, \vec{\theta}}(z_0|z_{-n}^{-1})$ is analytic on $\mathbb{C}_{\vec{\eps}_0}^{m_1}(r_1) \times \mathbb{C}_{\vec{\theta}_0}^{m_2}(r_2)$.

\item For sufficiently small $r_1, r_2 > 0$, there exist $0 < \rho_1 < 1$ and a positive constant $L_1$ such that for any two $\mathcal{Z}$-valued sequences $\{a_{-n_1}^0\}$ and $\{b_{-n_2}^0\}$ with $a_{-n}^0=b_{-n}^0$ and for all $(\vec{\eps}, \vec{\theta}) \in \mathbb{C}_{\vec{\eps}_0}^{m_1}(r_1) \times \mathbb{C}_{\vec{\theta}_0}^{m_2}(r_2)$, we have
$$
|p^{\vec{\eps}, \vec{\theta}}(a_0|a_{-n_1}^{-1})-p^{\vec{\eps}, \vec{\theta}}(b_0|b_{-n_2}^{-1})| \leq L_1 \rho_1^n \sup_{(y', \vec{\theta}') \in \mathcal{Y} \times \mathbb{C}_{\vec{\theta}_0}^{m_2}(r_2)} |q^{\vec{\theta}'}(a_0|y')|.
$$
\end{enumerate}
\end{lem}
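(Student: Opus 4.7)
The plan is to mirror the structures of Lemmas~\ref{Cauchy-a-b} and~\ref{Gaussian-a-b}, with the crucial change that contractions are now understood in the complex Hilbert metric $\tilde{d}_H$ via Lemma~\ref{z-complex-contraction}, rather than in the Euclidean metric; no reblocking is needed because the complex Hilbert contraction is already per-step. For the inclusion (\ref{confined-orbit}) in Part~$1$, I would argue by induction on $i$, starting from the base case $x_{-n-1}^{\vec{\eps}, \vec{\theta}} = \pi^{\vec{\eps}}$, which is $\tilde{d}_H$-close to $\pi^{\vec{\eps}_0} \in W^\circ$ for $r_1$ small. Inserting the intermediate point $f_{z_{i+1}}^{\vec{\eps}, \vec{\theta}}(x_i^{\vec{\eps}_0, \vec{\theta}_0})$ and using the triangle inequality together with Lemmas~\ref{Uniformity} and~\ref{z-complex-contraction} yields
$$\tilde{d}_H(x_{i+1}^{\vec{\eps}, \vec{\theta}}, x_{i+1}^{\vec{\eps}_0, \vec{\theta}_0}) \leq \rho_1 \, \tilde{d}_H(x_i^{\vec{\eps}, \vec{\theta}}, x_i^{\vec{\eps}_0, \vec{\theta}_0}) + \delta(1-\rho_1),$$
which inductively forces $\tilde{d}_H(x_i^{\vec{\eps}, \vec{\theta}}, x_i^{\vec{\eps}_0, \vec{\theta}_0}) \leq \delta$ for every $i$; since $x_i^{\vec{\eps}_0, \vec{\theta}_0} \in W^\circ$, this gives (\ref{confined-orbit}).

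For the cone condition (\ref{stay-in-cone}), I would expand
$$p^{\vec{\eps}, \vec{\theta}}(z_0|z_{-n}^{-1}) = \sum_{i,j} x_{-1,i}^{\vec{\eps}, \vec{\theta}} \, \pi_{ij}^{\vec{\eps}} \, q^{\vec{\theta}}(z_0|j),$$
and observe that each $q^{\vec{\theta}}(z_0|j)$ lies in a narrow cone about $\mathbb{R}^+$ by (\ref{dominated-by-real})(i), each $\pi_{ij}^{\vec{\eps}}$ is a small complex perturbation of the strictly positive real $\pi_{ij}^{\vec{\eps}_0}$, and each $x_{-1,i}^{\vec{\eps}, \vec{\theta}}$ is Euclidean-close to the strictly positive $x_{-1,i}^{\vec{\eps}_0, \vec{\theta}_0}$ via Lemma~\ref{Brian}(Part~$1$) applied to (\ref{confined-orbit}); termwise, each product has imaginary part bounded by a small multiple of its real part, and summing preserves cone containment in $\mathbb{C}_{\mathbb{R}^+}[\delta]$. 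Part~$2$ is then immediate: by (\ref{stay-in-cone}) the denominator of $f_z^{\vec{\eps}, \vec{\theta}}$ is nowhere zero on $\mathbb{C}_{\vec{\eps}_0}^{m_1}(r_1) \times \mathbb{C}_{\vec{\theta}_0}^{m_2}(r_2) \times \tilde{W}_{W^\circ, H}(\delta)$, so each $f_z^{\vec{\eps}, \vec{\theta}}(x)$ is jointly analytic there; analyticity propagates through (\ref{iter0}) to $x_i^{\vec{\eps}, \vec{\theta}}$, and then via (\ref{amalg}) to $p^{\vec{\eps}, \vec{\theta}}(z_0|z_{-n}^{-1})$.

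For Part~$3$, since $a_{-n}^0 = b_{-n}^0$, the iterates $x_{-1, a}^{\vec{\eps}, \vec{\theta}}$ and $x_{-1, b}^{\vec{\eps}, \vec{\theta}}$ are images of the two distinct starting points $x_{-n-1, a}^{\vec{\eps}, \vec{\theta}}$ and $x_{-n-1, b}^{\vec{\eps}, \vec{\theta}}$ under the same composition $f_{a_{-1}}^{\vec{\eps}, \vec{\theta}} \circ \cdots \circ f_{a_{-n}}^{\vec{\eps}, \vec{\theta}}$. Applying Lemma~\ref{z-complex-contraction} a total of $n+1$ times, and using the uniform $\tilde{d}_H$-bound on the starting pair from Part~$1$, yields $\tilde{d}_H(x_{-1, a}^{\vec{\eps}, \vec{\theta}}, x_{-1, b}^{\vec{\eps}, \vec{\theta}}) \leq C_0 \rho_1^{n+1}$; Lemma~\ref{Brian}(Part~$2$) then converts this to $|x_{-1, a}^{\vec{\eps}, \vec{\theta}} - x_{-1, b}^{\vec{\eps}, \vec{\theta}}| \leq C \rho_1^n$. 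Writing
$$p^{\vec{\eps}, \vec{\theta}}(a_0|a_{-n_1}^{-1}) - p^{\vec{\eps}, \vec{\theta}}(b_0|b_{-n_2}^{-1}) = \bigl(x_{-1, a}^{\vec{\eps}, \vec{\theta}} - x_{-1, b}^{\vec{\eps}, \vec{\theta}}\bigr) \Pi^{\vec{\eps}, \vec{\theta}}(a_0) \mathbf{1},$$
and using the trivial bound $|\Pi^{\vec{\eps}, \vec{\theta}}(a_0) \mathbf{1}| \leq l \, (\max_{i,j} |\pi_{ij}^{\vec{\eps}}|) \sup_{(y', \vec{\theta}')} |q^{\vec{\theta}'}(a_0|y')|$ closes Part~$3$.

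The main obstacle is keeping the complex Hilbert metric machinery applicable at every step: all iterates and their images must remain in $\tilde{W}^+$ so that $\tilde{d}_H$ is defined and the principal branch of $\log$ does not wrap, and the denominators of $f_z^{\vec{\eps}, \vec{\theta}}$ must stay bounded away from zero. These requirements are precisely what (\ref{confined-orbit}) and (\ref{stay-in-cone}) of Part~$1$ deliver, and hypothesis (\ref{dominated-by-real})(i) is indispensable here: it forces the channel weights into a narrow cone around the positive reals so that the convex combinations appearing in each successive iterate remain within the region where the complex Hilbert metric behaves well.
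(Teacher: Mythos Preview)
Your overall architecture matches the paper's proof essentially line for line: Part~1 via the recursion
\[
\tilde{d}_H\bigl(x_{i+1}^{\vec{\eps}, \vec{\theta}}, x_{i+1}^{\vec{\eps}_0, \vec{\theta}_0}\bigr) \leq \rho_1\,\tilde{d}_H\bigl(x_i^{\vec{\eps}, \vec{\theta}}, x_i^{\vec{\eps}_0, \vec{\theta}_0}\bigr) + \delta(1-\rho_1)
\]
obtained from Lemmas~\ref{Uniformity} and~\ref{z-complex-contraction}; Part~2 by propagating analyticity of $f_z$ through the iteration; Part~3 by iterated contraction in $\tilde{d}_H$ and then Lemma~\ref{Brian} to pass to the Euclidean metric. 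No reblocking is needed, exactly as you say.

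There is, however, a genuine gap in Part~3. You claim a ``uniform $\tilde{d}_H$-bound on the starting pair'' $x_{-n-1,a}^{\vec{\eps}, \vec{\theta}}$, $x_{-n-1,b}^{\vec{\eps}, \vec{\theta}}$ coming from Part~1. But Part~1 only places both points in $\tilde{W}_{W^{\circ},H}(\delta)$, a set of \emph{infinite} $\tilde{d}_H$-diameter: since $a_{-n-1}$ and $b_{-n-1}$ may differ, the real iterates $x_{-n-1,a}^{\vec{\eps}_0,\vec{\theta}_0}$ and $x_{-n-1,b}^{\vec{\eps}_0,\vec{\theta}_0}$ are images under \emph{different} maps $f_{a_{-n-1}}$, $f_{b_{-n-1}}$ and can sit arbitrarily close to different faces of $W$, so no uniform $\tilde{d}_H$-bound is available there. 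The paper instead starts the comparison at time $-n$. Because $a_{-n}=b_{-n}$, both $x_{-n,a}$ and $x_{-n,b}$ are images under the \emph{same} map $f_{a_{-n}}^{\vec{\eps},\vec{\theta}}$, and by the identity $\tilde{d}_H(f_z(u),f_z(v))=\tilde{d}_H(u\Pi^{\vec{\eps}},v\Pi^{\vec{\eps}})$ (equation~(\ref{key-observation})) the channel factors $q^{\vec{\theta}}(a_{-n}|\cdot)$ cancel; the finite constant $L_1'$ then comes from the bounded projective diameter of the image of the (nearly) strictly positive matrix $\Pi^{\vec{\eps}}$, which is a property beyond mere contractivity. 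Shift your starting index from $-n-1$ to $-n$ and invoke this bounded-image fact, and the rest of your Part~3 goes through. A smaller point: in your argument for~(\ref{stay-in-cone}), ``Euclidean-close to the strictly positive $x_{-1,i}^{\vec{\eps}_0,\vec{\theta}_0}$'' is not by itself enough, since those real coordinates are not bounded below; what $\tilde{d}_H$-closeness actually yields (and what your next sentence correctly uses) is that each $x_{-1,i}^{\vec{\eps},\vec{\theta}}$ lies in a narrow cone $\mathbb{C}_{\mathbb{R}^+}[\delta']$.
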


\begin{proof}

1. By Lemma~\ref{z-complex-contraction}, we can choose $r_1, r_2, \delta > 0$ sufficiently small such that there exists $0 < \rho_1 < 1$ such that for all $(\vec{\eps}, \vec{\theta}) \in \mathbb{C}_{\vec{\eps}_0}^{m_1}(r_1) \times \mathbb{C}_{\vec{\theta}_0}^{m_2}(r_2)$, $f_z^{\vec{\eps}, \vec{\theta}}$ is a $\rho_1$-contraction mapping on $\tilde{W}_{W^{\circ}, H}(\delta)$ under the complex Hilbert metric.

Now, choose $r_1, r_2 > 0$ so small (the existence of $r_1, r_2$ is guaranteed by Lemma~\ref{Uniformity}) such that for any $z \in \mathcal{Z}$, for all $x \in W$, all $(\vec{\eps}, \vec{\theta}) \in \mathbb{C}_{\vec{\eps}_0}^{m_1}(r_1) \times \mathbb{C}_{\vec{\theta}_0}^{m_2}(r_2)$
\begin{equation} \label{bounded-1}
\hspace{-1cm} \tilde{d}_H(f^{\vec{\eps}, \vec{\theta}}_{z}(x), f^{\vec{\eps}_0, \vec{\theta}_0}_{z}(x)) \leq \delta (1-\rho_1),
\end{equation}
and for all $\vec{\eps} \in \mathbb{C}_{\vec{\eps}_0}^{m_1}(r_1)$,
\begin{equation} \label{bounded-2}
\tilde{d}_H(\pi^{\vec{\eps}}, \pi(\vec{\eps}_0)) \leq \delta (1-\rho_1).
\end{equation}
We then deduce that
\begin{align}  \label{star}
\nonumber \tilde{d}_H(x^{\vec{\eps}, \vec{\theta}}_{i+1}, x^{\vec{\eps}_0, \vec{\theta}_0}_{i+1}) &=\tilde{d}_H(f^{\vec{\eps}, \vec{\theta}}_{z_{i+1}}(x^{\vec{\eps}, \vec{\theta}}_i), f^{\vec{\eps}_0, \vec{\theta}_0}_{z_{i+1}}(x^{\vec{\eps}_0, \vec{\theta}_0}_i)) \\
& \leq \tilde{d}_H(f^{\vec{\eps}, \vec{\theta}}_{z_{i+1}}(x^{\vec{\eps}, \vec{\theta}}_i), f^{\vec{\eps}, \vec{\theta}}_{z_{i+1}}(x^{\vec{\eps}_0, \vec{\theta}_0}_i))+\tilde{d}_H(f^{\vec{\eps}, \vec{\theta}}_{z_{i+1}}(x^{\vec{\eps}_0, \vec{\theta}_0}_i), f^{\vec{\eps}_0, \vec{\theta}_0}_{z_{i+1}}(x^{\vec{\eps}_0, \vec{\theta}_0}_i)).
\end{align}
Then, by (\ref{bounded-1}), (\ref{bounded-2}) and (\ref{star}), for $i > -n-1$, we have
$$
\tilde{d}_H(x^{\vec{\eps}, \vec{\theta}}_{i+1}, x^{\vec{\eps}_0, \vec{\theta}_0}_{i+1}) \leq \rho \tilde{d}_H(x^{\vec{\eps}, \vec{\theta}}_i, x^{\vec{\eps}_0, \vec{\theta}_0}_i)+ \delta (1-\rho_1).
$$
So, for all $i$,
$$
\tilde{d}_H(x^{\vec{\eps}, \vec{\theta}}_{i+1}, x^{\vec{\eps}_0, \vec{\theta}_0}_{i+1}) \leq \delta,
$$
and thus for all $i$, we have $x^{\vec{\eps}, \vec{\theta}}_{i+1} \in \tilde{W}_{W^{\circ}, H}(\delta)$, as desired. This, together with (\ref{amalg}) and Lemma~\ref{Brian} (Part $2$), implies (\ref{stay-in-cone}).

2. It follows from (\ref{dominated-by-real}(i)) and Part $1$ of Lemma~\ref{Brian} that for sufficiently small $r_1, r_2, \delta > 0$ and any $z \in \mathcal{Z}$, $f_z^{\vec{\eps}, \vec{\theta}}(x)$ is analytic with respect to $(\vec{\eps}, \vec{\theta}, x) \in \mathbb{C}_{\vec{\eps}_0}^{m_1}(r_1) \times \mathbb{C}_{\vec{\theta}_0}^{m_2}(r_2) \times \tilde{W}_{W^{\circ}, H}(\delta)$. It follows from this fact, the iterative nature of $x^{\vec{\eps}, \vec{\theta}}_i$ (see (\ref{iter0})) and Part $1$ that
for sufficiently small $r_1, r_2 > 0$, each $x^{\vec{\eps}, \vec{\theta}}_i$ is analytic
on $\mathbb{C}_{\vec{\eps}_0}^{m_1}(r_1) \times \mathbb{C}_{\vec{\theta}_0}^{m_2}(r_2)$. Part $2$ then immediately follows from (\ref{amalg}).

3. For all $(\vec{\eps}, \vec{\theta}) \in \mathbb{C}_{\vec{\eps}_0}^{m_1}(r_1) \times \mathbb{C}_{\vec{\theta}_0}^{m_2}(r_2)$, we write
$$
x^{\vec{\eps}, \vec{\theta}}_{i, a}=x^{\vec{\eps}, \vec{\theta}}_i(a_{-n_1}^i)=p^{\vec{\eps}, \vec{\theta}}(y_i=\cdot \;|a_{-n_1}^i),
$$
$$
x^{\vec{\eps}, \vec{\theta}}_{i,b}=x^{\vec{\eps}, \vec{\theta}}_i(b_{-n_2}^i)=p^{\vec{\eps}, \vec{\theta}}(y_i=\cdot \;|b_{-n_2}^i).
$$
Apparently we have
$$
x^{\vec{\eps}, \vec{\theta}}_{i+1, a}=f^{\vec{\eps}, \vec{\theta}}_{a_{i+1}}(x^{\vec{\eps}, \vec{\theta}}_{i, a}),
\qquad
x^{\vec{\eps}, \vec{\theta}}_{i+1, b}=f^{\vec{\eps}, \vec{\theta}}_{b_{i+1}}(x^{\vec{\eps}, \vec{\theta}}_{i, b}).
$$
Note that there exists a positive constant $L'_1$ such that
$$
\tilde{d}_H(x^{\vec{\eps}, \vec{\theta}}_{-n, a}, x^{\vec{\eps}, \vec{\theta}}_{-n, b}) \leq L'_1,
$$
for all $(\vec{\eps}, \vec{\theta}) \in \mathbb{C}_{\vec{\eps}_0}^{m_1}(r_1) \times \mathbb{C}_{\vec{\theta}_0}^{m_2}(r_2)$, where $r_1, r_2 >0$ are chosen sufficiently small.
Then from (\ref{confined-orbit}), we have
$$
\tilde{d}_H(x^{\vec{\eps}, \vec{\theta}}_{-1, a}, x^{\vec{\eps}, \vec{\theta}}_{-1, b}) \leq L'_1 \rho_1^{n-1}.
$$
Therefore, by Lemma~\ref{Brian}, there exists a positive constant $L''_1$ independent of $n_1, n_2$ such that for any $(\vec{\eps}, \vec{\theta}) \in \mathbb{C}_{\vec{\eps}_0}^{m_1}(r_1) \times \mathbb{C}_{\vec{\theta}_0}^{m_2}(r_2)$, we have
\begin{equation}   \label{F-rho}
|x^{\vec{\eps}, \vec{\theta}}_{-1, a}-x^{\vec{\eps}, \vec{\theta}}_{-1, b}| \leq L''_1 \rho_1^n.
\end{equation}
Now, using (\ref{amalg}) and the fact that
$$
p^{\vec{\eps}, \vec{\theta}}(a_0)=\sum_y \pi_y^{\vec{\eps}}
q^{\vec{\theta}}(a_0|y),
$$
we conclude that there is a positive constant $L_1$, independent of $n_1, n_2$ such that
\begin{equation}   \label{F-rho'}
|p^{\vec{\eps}, \vec{\theta}}(a_0|a_{-n_1}^{-1})-p^{\vec{\eps}, \vec{\theta}}(b_0|b_{-n_2}^{-1})| \leq L_1 \rho_1^n \sup_{(y', \vec{\theta}') \in \mathcal{Y} \times \mathbb{C}_{\vec{\theta}_0}^{m_2}(r_2)} |q^{\vec{\theta}'}(a_0|y')|.
\end{equation}
We then have finished the proof.
\end{proof}

We are now ready for the proof of Theorem~\ref{main}.

\begin{proof}[Proof of Theorem~\ref{main}]

We first prove that there exist $r_1, r_2 > 0$ such that for any $n$, $H^{\vec{\eps}, \vec{\theta}}_n(Z)$ is analytic on $\mathbb{C}_{\vec{\eps}_0}^{m_1}(r_1) \times \mathbb{C}_{\vec{\theta}_0}^{m_2}(r_2)$.

For a fixed $n$, recall that
$$
H_n^{\vec{\eps}, \vec{\theta}}(Z)=-\int_{\mathcal{Z}^{n+1}} p^{\vec{\eps}, \vec{\theta}}(z_{-n}^0) \log p^{\vec{\eps}, \vec{\theta}}(z_0|z_{-n}^{-1}) dz_{-n}^0,
$$
where
$$
p^{\vec{\eps}, \vec{\theta}}(z_{-n}^0)=\sum_{y_{-n}^0} p^{\vec{\eps}}(y_{-n}^0) \prod_{i=-n}^0 q^{\vec{\theta}}(z_i|y_i)
$$
and
$$
p^{\vec{\eps}, \vec{\theta}}(z_0|z_{-n}^{-1}) = x_{-1}^{\vec{\eps}, \vec{\theta}}(z_{-n}^{-1}) \Pi^{\vec{\eps}, \vec{\theta}}(z_0) \mathbf{1}.
$$
Now, for any $(\vec{\eps}, \vec{\theta}) \in \mathbb{C}_{\vec{\eps}_0}^{m_1}(r_1) \times \mathbb{C}_{\vec{\theta}_0}^{m_2}(r_2)$, we have
\begin{equation} \label{main-triangle}
|p^{\vec{\eps}, \vec{\theta}}(z_{-n}^0)| \leq \sum_{y_{-n}^0} |p^{\vec{\eps}}(y_{-n}^0)| \prod_{i=-n}^0 |q^{\vec{\theta}}(z_i|y_i)| \leq \sum_{y_{-n}^0} |p^{\vec{\eps}}(y_{-n}^0)| \prod_{i=-n}^0 \sup_{(y', \vec{\theta}') \in \mathcal{Y} \times \mathbb{C}_{\vec{\theta}_0}^{m_2}(r_2)}| q^{\vec{\theta}'}(z_i|y')|.
\end{equation}
And, by (\ref{confined-orbit}), for sufficiently small $r_1, r_2 > 0$, there exist $C_1, C_2 > 0$ such that
\begin{equation} \label{main-star}
C_1 \inf_{(y', \vec{\theta}') \in \mathcal{Y} \times \mathbb{C}_{\vec{\theta}_0}^{m_2}(r_2)} |q^{\vec{\theta}'}(z_0|y')| \leq |p^{\vec{\eps}, \vec{\theta}}(z_0|z_{-n}^{-1})| \leq C_2 \sup_{(y', \vec{\theta}') \in \mathcal{Y} \times \mathbb{C}_{\vec{\theta}_0}^{m_2}(r_2)} |q^{\vec{\theta}'}(z_0|y')|,
\end{equation}
which, together with (\ref{stay-in-cone}), implies that for some $C_3 > 0$,
$$
|\log p^{\vec{\eps}, \vec{\theta}}(z_0|z_{-n}^{-1})| \leq C_3+ \max \{|\log \sup_{(y', \vec{\theta}') \in \mathcal{Y} \times \mathbb{C}_{\vec{\theta}_0}^{m_2}(r_2)} |q^{\vec{\theta}'}(z_0|y')|, \log \inf_{(y', \vec{\theta}') \in \mathcal{Y} \times \mathbb{C}_{\vec{\theta}_0}^{m_2}(r_2)} |q^{\vec{\theta}'}(z_0|y')|\}.
$$
This, together with (\ref{max-of-three}), implies that on $\mathbb{C}_{\vec{\eps}_0}^{m_1}(r_1) \times \mathbb{C}_{\vec{\theta}_0}^{m_2}(r_2)$,
\begin{equation} \label{main-absolute-value-bounded}
\int_{\mathcal{Z}^{n+1}} \sup_{(\vec{\eps}, \vec{\theta}) \in \mathbb{C}_{\vec{\eps}_0}^{m_1}(r_1) \times \mathbb{C}_{\vec{\theta}_0}^{m_2}(r_2)} \left| p^{\vec{\eps}, \vec{\theta}}(z_{-n}^0) \log p^{\vec{\eps}, \vec{\theta}}(z_0|z_{-n}^{-1}) \right| dz_{-n}^0 < \infty
\end{equation}
By Lemma~\ref{exercise} (Part $2$), $H_n^{\vec{\eps}, \vec{\theta}}(Z_0|Z_{-n}^{-1})$
is analytic on $\mathbb{C}_{\vec{\eps}_0}^{m_1}(r_1) \times \mathbb{C}_{\vec{\theta}_0}^{m_2}(r_2)$.

Now, to prove the theorem, we only need to prove that there exist $r_1, r_2 > 0$ such that the $H^{\vec{\eps}, \vec{\theta}}_n(Z)$, as $n \to \infty$, uniformly converges on $\mathbb{C}_{\vec{\eps}_0}^{m_1}(r_1) \times \mathbb{C}_{\vec{\theta}_0}^{m_2}(r_2)$. Note that
\begin{align*}
\hspace{-1cm} |H^{\vec{\eps}, \vec{\theta}}_{n+1}(Z)-H^{\vec{\eps}, \vec{\theta}}_n(Z)| &=\left| \int_{\mathcal{Z}^{n+2}} p^{\vec{\eps}, \vec{\theta}}(z_{-n-1}^{0}) \log p^{\vec{\eps}, \vec{\theta}}(z_0|z_{-n-1}^{-1}) dz_{-n-1}^0 - \int_{\mathcal{Z}^{n+1}} p^{\vec{\eps}, \vec{\theta}}(z_{-n}^{0}) \log p^{\vec{\eps}, \vec{\theta}}(z_0|z_{-n}^{-1}) dz_{-n}^0 \right| \\
&=\left| \int_{\mathcal{Z}^{n+2}} p^{\vec{\eps}, \vec{\theta}}(z_{-n-1}^{0}) (\log p^{\vec{\eps}, \vec{\theta}}(z_0|z_{-n-1}^{-1})-\log p^{\vec{\eps}, \vec{\theta}}(z_0|z_{-n}^{-1})) dz_{-n-1}^0 \right|.
\end{align*}
Fix $(\vec{\eps}, \vec{\theta}) \in \mathbb{C}_{\vec{\eps}_0}^{m_1}(r_1) \times \mathbb{C}_{\vec{\theta}_0}^{m_2}(r_2)$. Then, by Lemmas~\ref{max-of-two} and~\ref{a-b}, either we have, for some $0 < \rho_1 < 1$, $L'_1 > 0$ and some $\delta_1$ with $(1+\delta_1) \rho_1 < 1$
$$
| p^{\vec{\eps}, \vec{\theta}}(z_{-n-1}^{0}) (\log p^{\vec{\eps}, \vec{\theta}}(z_0|z_{-n-1}^{-1})-\log p^{\vec{\eps}, \vec{\theta}}(z_0|z_{-n}^{-1})) | \leq L'_1 \left| p^{\vec{\eps}, \vec{\theta}}(z_{-n-1}^{0}) \frac{p^{\vec{\eps}, \vec{\theta}}(z_0|z_{-n-1}^{-1})-p^{\vec{\eps}, \vec{\theta}}(z_0|z_{-n}^{-1})}{p^{\vec{\eps}, \vec{\theta}}(z_0|z_{-n-1}^{-1})} \right|
$$
$$
\hspace{-1cm} \leq L'_1 |p^{\vec{\eps}, \vec{\theta}}(z_{-n-1}^{-1})| L_1 \rho_1^n \sup_{(y', \vec{\theta}') \in \mathcal{Y} \times \mathbb{C}_{\vec{\theta}_0}^{m_2}(r_2)} |q^{\vec{\theta}'}(z_0|y')|,
$$
or we have, for some $0 < \rho_1 < 1$, $L'_1 > 0$ and some $\delta_1$ with $(1+\delta_1) \rho_1 < 1$,
$$
|p^{\vec{\eps}, \vec{\theta}}(z_{-n-1}^{0}) (\log p^{\vec{\eps}, \vec{\theta}}(z_0|z_{-n-1}^{-1})-\log p^{\vec{\eps}, \vec{\theta}}(z_0|z_{-n}^{-1})) |
\leq L'_1 \left|p^{\vec{\eps}, \vec{\theta}}(z_{-n-1}^{0}) \frac{p^{\vec{\eps}, \vec{\theta}}(z_0|z_{-n-1}^{-1})-p^{\vec{\eps}, \vec{\theta}}(z_0|z_{-n}^{-1}))}{p^{\vec{\eps}, \vec{\theta}}(z_0|z_{-n}^{-1})} \right|
$$
$$
\leq L'_1 |p^{\vec{\eps}, \vec{\theta}}(z_{-n}^{-1})| |p^{\vec{\eps}, \vec{\theta}}(z_{-n-1}|z_{-n}^{0}) |  L_1 \rho_1^n \sup_{(y', \vec{\theta}') \in \mathcal{Y} \times \mathbb{C}_{\vec{\theta}_0}^{m_2}(r_2)} |q^{\vec{\theta}'}(z_0|y')|.
$$
Notice that for any given $\delta >0$, there exist $r_1, r_2 > 0$ such that for all $\vec{\eps} \in \mathbb{C}_{\vec{\eps}_0}^m(r_1)$,
$$
|\pi_{y_{-n}}^{\vec{\eps}}| \leq (1+\delta)
\pi_{y_{-n}}^{\vec{\eps}_0}, \qquad |\pi_{y_i y_{i+1}}^{\vec{\eps}}|
\leq (1+\delta) \pi_{y_i y_{i+1}}^{\vec{\eps}_0},
$$
and for any $y \in \mathcal{Y}$ and all $\vec{\theta} \in \mathbb{C}_{\vec{\theta}_0}^{m_2}(r_2)$,
$$
\int_{\mathcal{Z}} |q^{\vec{\theta}}(z|y)| dz \leq (1+\delta) \int_{\mathcal{Z}} q^{\vec{\theta}_0}(z|y) dz=1+\delta,
$$
(here we have used the fact that $\int_{\mathcal{Z}} |q^{\vec{\theta}}(z|y)| dz$ is a continuous function of $\vec{\theta} \in \mathbb{C}_{\vec{\theta}_0}^{m_2}(r_2)$; this follows from Lemma~\ref{exercise} (Part $1$)). It then follows from (\ref{main-triangle}) that
$$
\int_{\mathcal{Z}^{n}} |p^{\vec{\eps}, \vec{\theta}}(z_{-n}^{-1})| dz_{-n}^{-1} \leq (1+\delta)^{2n}, \qquad \int_{\mathcal{Z}^{n+1} }|p^{\vec{\eps}, \vec{\theta}}(z_{-n-1}^{-1})| dz_{-n-1}^{-1} \leq (1+\delta)^{2(n+1)}.
$$
Moreover, similar to (\ref{main-star}), we have for some $C_4, C_5 > 0$,
$$
C_4 \inf_{(y', \vec{\theta}') \in \mathcal{Y} \times \mathbb{C}_{\vec{\theta}_0}^{m_2}(r_2)} |q^{\vec{\theta}'}(z_{-n-1}|y')| \leq |p^{\vec{\eps}, \vec{\theta}}(z_{-n-1}|z_{-n}^0)| \leq C_5 \sup_{(y', \vec{\theta}') \in \mathcal{Y} \times \mathbb{C}_{\vec{\theta}_0}^{m_2}(r_2)} |q^{\vec{\theta}'}(z_{-n-1}|y')|.
$$

By choosing $\delta > 0$ sufficiently small, we can combine all the relevant inequalities above to obtain some $L > 0$ and some $0 < \rho < 1$ such that for all $(\vec{\eps}, \vec{\theta}) \in \mathbb{C}_{\vec{\eps}_0}^{m_1}(r_1) \times \mathbb{C}_{\vec{\theta}_0}^{m_2}(r_2)$,
$$
|H^{\vec{\eps}, \vec{\theta}}_{n+1}(Z)-H^{\vec{\eps}, \vec{\theta}}_n(Z)|
\leq \int_{\mathcal{Z}^{n+2}} | p^{\vec{\eps}, \vec{\theta}}(z_{-n-1}^{0}) (\log p^{\vec{\eps}, \vec{\theta}}(z_0|z_{-n-1}^{-1})-\log p^{\vec{\eps}, \vec{\theta}}(z_0|z_{-n}^{-1})) | dz_{-n-1}^0
\leq L \rho^n,
$$
which implies the analyticity of $H^{\vec{\eps}, \vec{\theta}}(Z)$ around $(\vec{\eps}_0, \vec{\theta}_0)$.

\end{proof}

\end{document}